\DeclareMathOperator{\id}{id}
\newcommand{\beqn}{\begin{eqnarray*}}
\newcommand{\eeqn}{\end{eqnarray*}}
\newtheorem{theorem}{Theorem}[section]
\newtheorem{lemma}[theorem]{Lemma}
\newtheorem{definition}[theorem]{Definition}
\newtheorem{corollary}[theorem]{Corollary}
\newtheorem{proposition}[theorem]{Proposition}
\newtheorem{remark}[theorem]{Remark}
\newtheorem{remarks}[theorem]{Remarks}
\newtheorem{example}[theorem]{Example}
\numberwithin{equation}{section}
\newcommand{\qed}{\hfill\mbox{\raggedright $\Box$}\medskip}
\newcommand{\etal}{\emph{et al.}}
\newcommand{\ignore}[1]{}
\newcommand{\shf}{\mbox{\footnotesize $\frac{1}{2}$}}
\newcommand{\Z}{\mathbb{Z}}
\newcommand{\R}{\mathbb{R}}
\newcommand{\T}{\mathbb{T}}
\newcommand{\U}{\mathbb{U}}
\newcommand{\X}{\mathbb{X}}
\newcommand{\Y}{\mathbb{Y}}
\newcommand{\hd}{{\mathcal H}}
\newcommand{\tl}{{\mathcal T}}
\renewcommand{\AA}{{\mathcal A}} 
\newcommand{\BB}{{\mathcal B}}
\newcommand{\CC}{{\mathcal C}}
\newcommand{\EE}{{\mathcal E}}
\newcommand{\FF}{{\mathcal F}}
\newcommand{\GG}{{\mathcal G}}
\newcommand{\HH}{{\mathcal H}}
\newcommand{\OO}{{\mathcal O}} 
\newcommand{\PP}{{\mathcal P}}
\newcommand{\RR}{{\mathcal R}}
\newcommand{\TT}{{\mathcal T}}
\newcommand{\VV}{{\mathcal V}}
\newcommand{\eps}{\varepsilon}
\newcommand{\bz}{\bowtie^0}
\newcommand{\tbz}{\tilde\bowtie^0}
\newcommand{\simODE}{\stackrel{\mathrm{ODE}}{\sim}}
\newcommand{\sot}{\mbox{\footnotesize $\frac{1}{3}$}}
\newcommand{\Matrix}[1]{\ensuremath{\left[\begin{array}%
{cccccccccccccccccccccccc} #1 \end{array}\right]}}
\newcommand{\compose}{\raisebox{.15ex}{\mbox{{\scriptsize $\circ$}}}}
\newcommand{\Sone}{{\bf S}^1}
\newcommand{\supp}{\mathrm{supp}}
\title{Overdetermined ODEs and\\
Rigid Periodic States in Network Dynamics} 
\author{Ian Stewart \\ Mathematics Institute \\ University of Warwick 
	\\ Coventry CV4 7AL \\United Kingdom \\	\\ \texttt{\small i.n.stewart@warwick.ac.uk}
	\\ \texttt{\small ORCID: 0000-0002-3759-0142}}
\begin{document}

\maketitle

\begin{abstract}
We consider four long-standing Rigidity Conjectures about synchrony and phase patterns
for hyperbolic periodic orbits of admissible ODEs for networks.
Proofs of stronger local versions of these conjectures, published in 2010--12,
are now known to have a gap, but remain valid for a broad class of networks.
Using different methods we prove local versions of the conjectures
under a stronger condition, `strong hyperbolicity', which is
related to a network analogue of the Kupka-Smale Theorem.
Under this condition we also deduce global versions of the conjectures 
and an analogue of the $H/K$ Theorem in equivariant dynamics.
We prove the Rigidity Conjectures for
all 1- and 2-colourings and all 2- and 3-node networks by
proving that strong hyperbolicity is generic in these cases.
\end{abstract}

\noindent{\em Mathematics Subject Classification\/}: 05C99, 34C15, 34C25.

\noindent{\it Keywords\/}: network, periodic orbit, rigid, synchrony, phase shift, 
 balanced, hyperbolic, strongly hyperbolic, overdetermined ODE.


\section{Introduction}
\label{S:intro}

The phenomenon of synchrony in network dynamics
has been widely studied for decades; see for example
Boccaletti \etal~\cite{BPP01} and Wang~\cite{W02}. 
In the network context, sets of synchronous nodes are often called {\em clusters}. 
Another term is `partial synchrony': see 
Belykh \etal~\cite{BBH00, BBNH03}, Belykh and Hasler~\cite{BH11},
Pecora \etal~\cite{PSHMR13}, Pogromsky~\cite{P08}, Pogromsky \etal~\cite{PSN02}. 
In neurobiology, neurons are synchronised if they
`fire together', a relationship that is significant for neural processing
and the architecture of the brain (Kopell and LeMasson~\cite{KL94}, Singer~\cite{S99}, 
Uhlhaas \etal \cite{UPLMNNS09}).
Manrubia \etal~\cite{MMZ04} discuss synchronisation in neural
networks. Van Vreeswijk and Hansel~\cite{VH01} analyse several models, including
a coupled system of Hodgkin-Huxley neurons 
that can produce spikes and bursts.
Mosekilde \etal~\cite{MMP02} describe a model of synchronisation in
nephrons, structures in the kidneys that 
help regulate blood pressure.

A closely associated phenomenon is the occurrence of  {\em phase patterns}:
specific phase shifts (as fractions of the period) between nodes with otherwise identical time-periodic waveforms. 
Such phenomena occur in models of animal locomotion (Buono and Golubitsky~\cite{BG01},
Golubitsky \etal~\cite{GSBC98,GSCB99}), 
peristalsis (Chambers \etal~\cite{CTB13}, Gjorgjieva \etal~\cite{GBEE13}), 
respiration (Butera \etal~\cite{BRS99a, BRS99b}), 
and binocular rivalry and visual illusions (Curtu~\cite{C10}, Diekman \etal~\cite{DG14,DGMW12,DGW13,SG19}.
There are also applications in the physical sciences, for instance
to robotics (Campos \etal~\cite{CMS10}, Liu \etal~\cite{LCZ09})
and coupled lasers (Glova~\cite{G03}, Zhang \etal~\cite{ZPYLZX19}).

The common occurrence of such patterns suggests that a unified theory,
providing a conceptual framework applicable to arbitrary networks, 
could prove useful. One such framework is the
`coupled cell' formalism of~\cite{GS02,GS22,GST05,SGP03}, which
takes its inspiration from the topological approach to nonlinear dynamics
of Arnold~\cite{A89}, Smale~\cite{S67}, and many others, and the analogous
theory of equivariant dynamics and bifurcation of Golubitsky \etal~\cite{GS85,GSS88}.
In this formalism, a network determines a class of admissible ODEs,
and the primary aim is to relate the dynamics of such equations to the network
architecture. Synchrony and phase patterns are closely associated
with balanced colourings, which are combinatorial features of
the network, and the associated quotient networks, whose admissible ODEs
prescribe the dynamics of synchronous clusters. There are numerous
existence theorems for steady and periodic states with prescribed synchrony
and phase patterns, and some ideas extend to synchronised chaos.
We summarise this formalism in Section~\ref{S:DN}.

However, the theory of synchrony and phase patterns for periodic states
remains incomplete, because several key results are still
conjectural. The main ones are the Rigidity Conjectures, discussed in Section~\ref{S:RC}.
They have been proved for a broad class of networks, 
but it has recently been realised that 
the published proofs make a tacit assumption that fails for some networks:
see Section~\ref{S:PR}.
The aim of this paper is to prove these conjectures without that assumption,
but under an extra technical hypothesis: `strong hyperbolicity' of the periodic orbit.
In some cases it is possible to dispense with this condition; in particular
we prove the Rigidity Conjectures unconditionally for networks with up to 3 nodes and for
all 1- and 2-colour patterns on any network. The general case, without assuming strong hyperbolicity, remains open, but could be dealt with using similar methods if it is possible to prove
suitable network analogues of the Kupka-Smale Theorem, Section~\ref{S:KST}.
The method shows that if counterexamples to the Rigidity Conjectures exist, they
must have extremely degenerate systems of periodic orbits, Section~\ref{S:FinRem}.

For technical
reasons indicated in Section~\ref{S:AODE}, the analysis is carried out within a mild 
generalisation of the standard coupled cell formalism, 
developed in detail in~\cite{GS22}. We
focus on synchrony and phase relations between nodes (formerly called `cells'), defined as follows. 
Consider an admissible ODE for the network (one that respects the network structure):
\begin{equation}
\label{e:netwkODE}
\dot{x}=f(x)\qquad x_c \in P_c
\end{equation}
where $c$ runs through the set $\CC = \{1, \ldots, n\}$ of nodes,  
$x = (x_1, \ldots, x_n)$, $f = (f_1, \ldots, f_n)$, and the node spaces
$P_c$ are finite-dimensional real vector spaces.
An orbit $\X=\{x(t)\}$ of~\eqref{e:netwkODE}
is {\em synchronous} for nodes $c, d$ if the corresponding node states are equal for all time:
\begin{equation}
\label{E:synch_eq}
x_c(t) \equiv x_d(t)
\end{equation}
The orbit $\X$  is {\em phase-related} for nodes $c, d$ if the corresponding node states are 
equal for all times, up to a phase shift $\theta$:
\begin{equation}
\label{E:phase_eq}
x_c(t) \equiv x_d(t+\theta)
\end{equation}
The {\em synchrony pattern} for $x(t)$ is the equivalence relation determined by all
synchronous pairs. It can also be viewed as a partition of the nodes into synchronous
clusters, or as a colouring in which synchronous nodes are given the same colour.
We pass between these three interpretations without further comment.

For technical reasons, explained in Sections~\ref{S:CLS} and \ref{S:LRSGP}, we prefer to use local versions of these relations.
Let $t_0 \in \R$. Then $\X$ is is {\em locally synchronous} for nodes $c, d$ 
at $x(t_0)$ if there exists an open set $J \subseteq \R$ containing $t_0$
such that \eqref{E:synch_eq} holds for all $t \in J$,
and {\em locally phase-related} for nodes $c, d$ at $x(t_0)$ if \eqref{E:phase_eq}
holds for all $t \in J$.
The {\em local synchrony pattern} at $x(t_0)$ is the equivalence relation determined by all
synchronous pairs.

The  {\em local phase pattern} at a point $x(t_0) \in \X$ is slightly more complicated.
Phase relations between pairs of nodes need not be unique, because
the minimal period of $x_c(t)$ at a node $c$ may differ from the minimal 
period of the entire orbit $\X$.
For an oscillating node this can happen for a multirhythm
state~\cite{GS06}; in fact it is possible for all minimal periods at nodes to differ
from the overall minimal period. Moreover, 
 the state of a steady node is fixed by all phase shifts.
The {\em phase pattern} for $\X$ encodes phase relations between
pairs of nodes as sets $\Theta(c,d)$ containing all $\theta$ 
such that~\eqref{E:phase_eq} is valid. The 
set $\Theta = \{ \Theta(c,d)\}$ has a natural groupoid structure~\cite{SP07}.
This definition also has a local version where again we consider only $t \in J$.

\subsection{Rigidity}
\label{S:rigidity}

The notion of rigidity is central to the general theory of synchrony and phase patterns,
because it excludes artificial examples where these patterns arise for non-generic
reasons, such as couplings that are generically non-zero but vanish on the states concerned.
A prerequisite for defining rigidity is that the relevant state (which we
take to be either an equilibrium or a periodic orbit) should be hyperbolic.
For an equilibrium this means that the Jacobian has no zero or purely imaginary
eigenvalues, Guckenheimer and Holmes~\cite[Section 1.4]{GH83}. 
For periodic orbits, the Floquet multipliers should not lie on
the unit circle except for a simple eigenvalue 1 associated with the periodic orbit,
Hassard \etal~\cite[Section 1.4]{HKW81}.
Equivalently, the derivative of a Poincar\'e return map should have no eigenvalues
on the unit circle~\cite[Section 1.5]{GH83}.

In a general dynamical system, hyperbolicity of a given periodic orbit or
equilibrium is a generic property. That is, it is:

{\em Dense}: Every periodic orbit becomes hyperbolic after an arbitrarily small perturbation 
(if necessary).

{\em Open}: After any sufficently small perturbation, a hyperbolic periodic orbit remains hyperbolic.

The density property follows from the Kupka-Smale Theorem, a considerably stronger
statement; see Kupka~\cite{K63}, Smale~\cite{S63}, and Peixoto~\cite{P66}. Openness is obvious
because eigenvalues (of the Jacobian at an equilibrium 
or the derivative of a Poincar\'e map at a periodic orbit) perturb continuously,
Lancaster and Tismenetsky \cite{LT85}.

We describe the situation for periodic orbits; there are simpler analogous statements
for equilibria. Consider an ODE 
\begin{equation}
\label{E:ODE_fx}
\dot x = f(x) \qquad x \in \R^m
\end{equation} 
for a smooth (that is, $C^\infty$) vector field $f$ on a finite-dimensional
Euclidean space $\R^m$.
In the theory of general dynamical systems, any hyperbolic periodic 
state $\X$ of this ODE with period $T$
persists when $f$ is replaced by any sufficiently small 
perturbation $\tilde f$; see Hirsch \etal~\cite{HPS77}. 
That is, locally there is a unique perturbed periodic state
$\tilde x(t)$ near $x(t)$ with period $\tilde T$ near $T$.
Throughout this paper, `small' refers to the $C^1$ norm, Section~\ref{S:C1norm}.

\begin{definition}\em
\label{D:P_rigid}
Suppose that \eqref{E:ODE_fx} is an admissible ODE for a network.
A property $\PP$ of a hyperbolic periodic orbit $\X$ is {\em rigid} if, for any 
admissible perturbation $\tilde f = f+p$ of the vector field $f$, where $p$ is
sufficiently small, the perturbed periodic orbit $\tilde\X$
also has property $\PP$.
\qed\end{definition}

Hyperbolicity ensures that a locally unique perturbed
periodic orbit exists, so this definition makes sense.
We allow property $\PP$ to depend on the period $T$, which
is replaced by $\tilde T$ in the perturbed ODE. So, for example,
`nodes 1 and 2 are out of phase by half a period' might be a rigid property.
Rigidity is an `openness' condition: in a suitable topology, the set of $\X$ with
property $\PP$ is open.

\subsection{Motivation from Equivariant Dynamics}
In this paper we focus on rigid synchrony and phase patterns of
orbits of admissible ODEs for networks. The natural setting
for these patterns, and an important source of motivation for the network theory,
is equivariant dynamics~\cite{GS02,GSS88}. Here the map $f$ in~\eqref{E:ODE_fx} 
is equivariant for the action of a group $\Gamma$ on $\R^k$; that is,
\[
f(\gamma x) = \gamma f(x) \quad \forall \gamma \in \Gamma
\]
Associated with any periodic orbit $\X=\{x(t)\}$ are two subgroups of $\Gamma$:
\beqn
K &=& \{\gamma: \gamma x(t) = x(t)\ \forall t \in \R\} \\
H &=& \{\gamma: \gamma \X = \X\ \forall t \in \R\} 
\eeqn
We call $K$ the group of pointwise symmetries of $\X$, and $H$ the
the group of setwise symmetries. The possible pairs $(H,K)$ are classified
by the $H/K$ Theorem of Buono and Golubitsky~\cite{BG01}. For finite $\Gamma$
the main conditions are
that $K \lhd H$ and the quotient group  $H/K \cong \Z_r$ is cyclic.
There are other technical conditions if the state space has low dimension.
It can then be shown that $\X$ is a discrete rotating wave:
\[
\gamma x(t) = x(t+\theta) \qquad \theta = mT/r
\]
for all $t \in \R$ and for some integer $m$. 
The group $K$ plays the role of a (global) synchrony pattern ($\theta = 0$), while $H$
(or $H/K$) plays the role of a (global) phase pattern. 
By~\cite[Corollary 3.7]{GS02}, both $H$ and $K$ are rigid properties of
$\X$. (The term used there is `robust'.) So the synchrony and phase
patterns for $(H,K)$ are rigid.
It is easy to prove that if $\GG$ is a network with symmetry group $\Gamma$
then every admissible map is $\Gamma$-equivariant. 
Therefore any synchrony or
phase pattern arising from a pair $(H,K)$ is rigid.
However, equivariant maps need not be admissible~\cite[Section 3.1]{AS07}.
Examples of synchrony and phase patterns of these kinds can be found
in many papers, for instance~\cite{AS06,AS07,AS08},
Buono and Golubitsky~\cite{BG01}, Golubitsky \etal~\cite{GMS16,GNS04,GS06,LG06,SG19}. 

It is well known that rigid synchrony patterns in networks can arise for reasons more general
 than symmetry. In particular, any balanced colouring of the nodes determines a rigid
synchrony pattern~\cite{GST05,SGP03}. If the Rigid Synchrony Conjecture holds for the network,
the converse is true. Moreover, if the Rigid Phase Conjecture also holds for the network, its rigid phase patterns arise from cyclic symmetries of 
quotient networks by balanced colourings~\cite{SP08}.

Another source of motivation for the Rigidity Conjectures is their analogues
for equilibria. It is proved in~\cite{GST05} that if a synchrony pattern of a steady state
is rigid, then the corresponding colouring is balanced. 
That is, synchronous nodes 
have synchronous inputs, up to input isomorphism. Aldis~\cite[Chapter 7]{A10} gives another
proof using transversality.
A third proof, using methods along the lines of this paper, is in~\cite{S20}.

\subsection{Rigidity Conjectures}
\label{S:RC}
In this paper, an orbit $\X=\{x(t)\}$ is defined to be {\em periodic} if there exists $T > 0$
such that $x(t+T) \equiv x(t)$, and $x(t)$ is not constant (which would be a steady state).
 A periodic orbit of a network may be steady at some nodes; 
 that is, some components $x_c(t)$ can be constant as $t$ varies.
However, $x_d(t)$ must {\em oscillate} (not be constant) for some node $d$.
This can happen, for example, in feedforward networks, where a steady node is an input to
oscillating nodes; see for example Golubitsky \etal~\cite{GNS04}.

The Rigidity Conjectures, stated about 15 years ago, comprise:
\begin{itemize}
\item[\rm (a)]
{\em Rigid Input Conjecture}:
For any rigid synchrony or phase pattern, synchronous or phase-related
nodes are input isomorphic. That is, they have the same number of input 
arrows for any given arrow-type.
\item[\rm (b)]
{\em Rigid Synchrony Conjecture}: 
For any rigid synchrony pattern, corresponding input nodes inherit 
the same synchrony pattern, if suitably identified.
\item[\rm (c)]
{\em Rigid Phase Conjecture}:
For any rigid phase pattern, corresponding input nodes inherit the same 
phase pattern, if suitably identified. 
\item[\rm (d)]
{\em Full Oscillation Conjecture}: If a transitive network has a hyperbolic
periodic state, there exist arbitrarily small admissible perturbations for which
every node oscillates. 
\end{itemize}
The identifications in (b) and (c) must be made using input isomorphisms,
which preserve the node dynamics and the numbers and types of couplings.
Although rigidity is not mentioned specifically in (d),
it is implicit: a node that is steady after any small perturbation is
rigidly steady. Moreover, (d) is a simple consequence of (c).

All four conjectures can also be stated as local versions, in which the relevant hypotheses are
assumed to hold only for some non-empty open interval of time. We append the word
`Local' to distinguish these. The local versions are not just generalisations:
they have technical advantages and are essential to this paper. As it happens,
the local versions imply the global ones, but this is not immediate
from the definitions. The Rigidity Conjectures are all motivated by the same intuition: if two nodes
have the same dynamics, except perhaps for a phase shift, then the same should be
true of the nodes that input to them, up to some bijection and for the same phase shift. In (a) 
the common feature is having the same number of inputs of each arrow type;
in (b) it is synchrony, in (c) it is a phase relation, and in (d) it is the node not oscillating. 

The condition of rigidity is required for  (a), (b), and (c), because without it, 
these conjectures are false \cite[Section 7]{SP07}. All known
counterexamples are `non-generic', having very special features
that can be destroyed by small admissible perturbations of the ODE.
It therefore makes sense to impose suitable genericity conditions.
The natural choice is rigidity: the property concerned persists under small 
 admissible perturbations. As mentioned in Section~\ref{S:rigidity}, the periodic orbit 
must be hyperbolic to ensure that a locally unique perturbed periodic orbit exists. 
See Section~\ref{S:HPO} for details.

For (d) a network is transitive (strongly connected, path-connected) if,
for any two nodes $c,d$, there is a directed path from $c$ to $d$.
Statement (d) can be false if the network is not transitive; for instance,
in feedforward networks nodes upstream from an oscillating node can be rigidly steady. 

\subsection{Previous Results}
\label{S:PR}

We summarise the current state of play for the Rigidity Conjectures.

Conjectures (a,b,c) were stated in 2006 in Golubitsky and Stewart~\cite[Section 10]{GS06},
with a claim that (a) had been proved using strongly admissible coordinate changes.
At that time, all four conjectures had been `folklore' for some years. Fully oscillatory
states are mentioned by Josi\v{c} and T\"or\"ok~\cite{JT06}
as hypotheses of existence theorems for periodic states of 
symmetric networks. Conjecture
(d) is stated explicitly in~\cite{SP08}.
A proof of conjecture (a) is presented in~\cite{SP07}. Proofs
of conjectures (a,b,d) are presented in Golubitsky \etal~\cite{GRW10} for stronger local versions.
Similar methods were applied in Golubitsky \etal~\cite{GRW12} to prove (c), again in
a local version. Since the local versions have weaker hypotheses,
they are stronger than the global ones. Curiously, the local versions are
technically more tractable, Sections~\ref{S:LRS} and \ref{S:CLS}.
Joly~\cite{J12} proves the Full Oscillation Conjecture, 
but only for fully inhomogeneous networks.
This proof uses transversality methods.

Readers familiar with the literature may wonder why we
refer to the above statements as conjectures. The reason is
that it has recently been noticed that
there is a gap in the proofs in~\cite{GRW10,GRW12,SP07}, 
which is related to the coordinate changes employed:
see~\cite[Appendix]{S20}. Specifically, it is assumed that certain coordinate changes are
`strongly admissible', when sometimes they are not. This gap can be repaired by requiring
the network to be {\em semihomogeneous}: such that
input equivalence is the same as state (previously cell) equivalence.
This class includes all homogeneous networks (all nodes have
the same number of input arrows of each arrow type)
and all fully inhomogeneous networks (all nodes and all arrows have different
types and no multiple arrows or self-loops occur), see
Golubitsky \etal~\cite{GGPSW19,GS17}. However, not all networks are 
semihomogeneous; a simple example is discussed in Section~\ref{S:3NE}.

As already remarked, 
in this paper we prove all four conjectures, for any finite network, but only by making
an extra assumption on the periodic orbit concerned, which we call
`strong hyperbolicity'. A weaker property, `stable isolation',
suffices, but existing proofs of this property for specific networks
establish strong hyperbolicity in any case. Both properties are
closely related to the Kupka-Smale Theorem, which (among other things)
asserts that hyperbolicity of all equilibria and periodic orbits is
generic in a general dynamical system: see Section~\ref{S:KST}. 
A Kupka-Smale analogue for admissible ODEs would imply strong hyperbolicity:
see Section~\ref{S:KSN}.

The analysis shows that
any hypothetical counterexample
to any of the Rigidity Conjectures must have at least 4 nodes and
involve a synchrony pattern with at least 3 colours. Moreover,
the periodic orbit concerned must have properties that are
highly non-generic in a general dynamical system, and seem
unlikely even when network constraints are imposed. Thus the result proved
here, although requiring stronger hypotheses, represent a significant 
strengthening of the evidence in support of the Rigidity Conjectures.

\subsection{Kupka-Smale Theorem}
\label{S:KST}

The Kupka-Smale Theorem plays a central role in this paper. It
asserts a form of genericity (open and dense)
for discrete dynamical systems (diffeomorphisms) or continuous ones (flows).
We paraphrase the result:

\begin{theorem}[\bf Kupka--Smale Theorem]
\label{T:KupkaSmale}
Let $M$ be a $C^\infty$ manifold and let
$\VV$ be the space of all $C^r$ vector fields on $M$ with the $C^r$ topology, $r \geq 1$.
For a general dynamical system on $M$, the following three
properties are generic in $\VV$; that is open and dense --- indeed, residual:
\begin{itemize}
\item[\rm (a)]
Every equilibrium point is hyperbolic.
\item[\rm (b)]
Every periodic orbit is hyperbolic. 
\item[\rm (c)]
The stable and unstable manifolds of all equilibria and periodic orbits
intersect transversely.
\end{itemize}
\end{theorem}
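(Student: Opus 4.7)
The plan is to invoke the Baire category theorem: since $\VV$ with the $C^r$ topology is a Baire space, each of the three properties need only be expressed as a countable intersection of open dense sets. I would exhaust $M$ by an increasing sequence of compact sets $K_n$ and truncate periods by $T_n \to \infty$, and then define $\AA_n, \BB_n, \CC_n \subset \VV$ as the sets of vector fields for which every equilibrium in $K_n$ is hyperbolic, every periodic orbit of period $\leq T_n$ meeting $K_n$ is hyperbolic, and the relevant stable and unstable manifolds intersect transversely within $K_n$, respectively. Openness of each set follows from continuous dependence of eigenvalues of Jacobians and of Floquet multipliers, together with the standard persistence of hyperbolic invariant sets under $C^1$-small perturbations.

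For density in (a), I would first apply Thom's transversality theorem to arrange that $f$ is transverse to the zero section of $TM$, which forces the equilibria in $K_n$ to be finite in number and isolated. At each such equilibrium a compactly supported bump perturbation adjusts the Jacobian by an arbitrarily small matrix, pushing any eigenvalues off the imaginary axis; these local perturbations can be superposed because the equilibria are isolated and the bumps have disjoint support.

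The main obstacle will be density for (b), because a naive orbit-by-orbit perturbation fails: periodic orbits can accumulate, and a single perturbation may simultaneously disrupt infinitely many of them. The classical resolution, which I would follow, is to treat periodic points as zeros of $(x,\tau) \mapsto \phi_\tau(x) - x$ on $M \times (0, T_n]$ and to use parameterised transversality (Abraham's theorem) to show that, generically, this zero set is a smooth 1-manifold consisting of isolated orbits, only finitely many of which have period $\leq T_n$ and meet $K_n$. On each such orbit, one then perturbs the Floquet operator via bump functions supported in narrow flowboxes, chosen so short that each perturbation affects only the targeted orbit among those under consideration. An induction on the finite list of orbits, combined with openness of the hyperbolicity condition already established for orbits handled at earlier stages, completes the density step.

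For (c), once (a) and (b) hold on a residual set, the stable and unstable manifolds of hyperbolic equilibria and periodic orbits depend continuously on $f$ over compact pieces. I would exhaust each pair $(W^u, W^s)$ by compact discs and apply Thom transversality to the inclusion $W^u \hookrightarrow M$ relative to $W^s$, using perturbations supported in small flowboxes away from the hyperbolic invariant sets themselves so as not to disturb their hyperbolicity. Intersecting the resulting $\AA_n \cap \BB_n \cap \CC_n$ across $n$ yields the required residual, hence dense, set in $\VV$.
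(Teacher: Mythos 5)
The paper does not actually prove this theorem: it is a classical result quoted from the literature, and the ``proof'' in the text is a pointer to Kupka, Smale, and Peixoto's simplified argument. Your proposal is therefore doing genuinely more work than the paper, and what you have written is a faithful outline of the standard proof found in those references: a Baire-category filtration by compact sets $K_n$ and period bounds $T_n$, openness of each stage from persistence of hyperbolic sets, density for equilibria via transversality to the zero section plus local Jacobian perturbations, density for periodic orbits via parameterised (Abraham) transversality applied to $(x,\tau)\mapsto(x,\phi_\tau(x))$ against the diagonal followed by flowbox perturbations of the Poincar\'e return map, and Thom transversality for the invariant-manifold intersections. One point deserves more care than your sketch gives it, and it is precisely the point Peixoto's Lemma 3 exists to handle: openness of the set $\BB_n$ does not follow only from persistence of the finitely many hyperbolic orbits already present, because a $C^1$-small perturbation could in principle introduce a \emph{new} non-hyperbolic closed orbit of period $\leq T_n$ meeting $K_n$ (for instance by pulling the period of a nearby longer orbit below the cutoff, or by orbits accumulating on an equilibrium with periods tending to zero). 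Ruling this out requires a compactness argument on orbit segments together with the flowbox theorem and a lower bound on periods near hyperbolic equilibria. Also note that transversality of $(x,\tau)\mapsto(x,\phi_\tau(x))$ to the diagonal only yields \emph{elementary} (no Floquet multiplier equal to $1$) orbits, so a further perturbation step is still needed to expel the remaining multipliers from the unit circle; your flowbox step does this, but the two stages should be kept logically separate. With those caveats your outline matches the classical argument, which is all the paper itself delegates to the cited sources.
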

\begin{proof}
For precise statements and definitions 
see Kupka~\cite{K63}, Smale~\cite{S63}, and the simplified
proof in Peixoto~\cite[Section 2]{P66}.
\qed\end{proof}

There is a corresponding theorem for discrete dynamical systems (diffeomorphisms).
In this paper we require both the continuous and the discrete versions, but
the more delicate property (c) is not needed.

The original theorem of Kupka and Smale was restricted
 to compact manifolds, where the $C^r$ topology is the usual metric
 topology defined by bounding norms of derivatives of order up to $r$.
This restriction was removed by Peixoto, assuming the Whitney $C^r$ topology.
In this paper state spaces are $\R^k$, hence noncompact, but
 we can effectively reduce to the case of a compact manifold by using
 bump functions to make the admissible vector field vanish outside
 a compact set whose interior contains the periodic orbit under consideration.
 Therefore we can work with the metric $C^r$ topology. (The space
 of $C^\infty$ maps is not closed in this topology.) 
 We make these statements precise in Section~\ref{S:PRH}.

\subsection{Is Hyperbolicity Generic for Networks?}

For a general dynamical system (equivalent to an all-to-all coupled
fully inhomogeneous network) the Kupka-Smale Theorem proves
that hyperbolicity of all periodic orbits is a generic property.
Field~\cite{F80} proves an equivariant analogue,
which applies to some symmetric networks. Not all, because in this case admissible maps
are always equivariant, but equivariant maps for networks need not be admissible~\cite{AS06}.

Hyperbolicity and strong hyperbolicity are probably generic properties
for all networks, except for one class where this statement is known to 
be false by Jos\'ic and T\"or\"ok~\cite[Remark 1]{JT06}: see Section~\ref{S:FH}.
Indeed, the Kupka-Smale theorem fails for such networks.
We suspect that this class of networks contains all exceptions,
and that it can be avoided
in the proof of the Rigidity Conjectures below, but currently we are unable to prove
either of these statements.

\subsection{Implications Between the Conjectures}
\label{S:IBC}

In principle, any of the Rigidity Conjectures might be valid for some
networks, or some periodic orbits, but false for others.
It is therefore convenient to phrase the results of the conjectures as positive
properties of the network and the periodic orbit:

\begin{definition}\em
\label{D:properties}

Let $\GG$ be a network and let $\X$ be a hyperbolic periodic orbit of an admissible ODE.
Then the pair $(\GG,\X)$ has the following properties
if the stated conditions hold:
\begin{itemize}
\item[\rm (a)]
{\em Rigid Input Property (RIP)}:
For any rigid synchrony or phase pattern of $\X$, synchronous or phase-related
nodes are input equivalent. 
\item[\rm (b)]
{\em Rigid Synchrony Property (RSP)}: 
For any rigid synchrony pattern of $\X$, corresponding input nodes inherit 
the same synchrony pattern, if suitably identified.
\item[\rm (c)]
{\em Rigid Phase Property (RPP)}:
For any rigid pattern of phase relations of $\X$, corresponding input nodes inherit the same 
phase pattern, if suitably identified. 
\item[\rm (d)]
{\em Full Oscillation Property (FOP)}: If $\GG$ is transitive and $\X$ is a hyperbolic
periodic state, there exist arbitrarily small perturbations for which
every node oscillates. 
\end{itemize}
If any of the above statements holds for all hyperbolic $\X$, we say that $\GG$
has the property concerned.
\qed\end{definition}
Each conjecture states that all networks have the corresponding property.
There are local versions, where the stated conditions hold on a non-empty
open interval of time, which we call the Local Rigid Synchrony Property (LRSP), and so on.

The properties are closely related. Known implications among 
the corresponding properties, for any specific $\GG$, are:
\[
\mathrm{FOP} \Longleftarrow \mathrm{RPP} \Longleftrightarrow \mathrm{RSP}\implies \mathrm{RIP}
\]
Three implications are trivial:
$\mathrm{RPP} \implies \mathrm{RSP}$, 
$\mathrm{RSP} \implies \mathrm{RIP}$, and
$\mathrm{RPP} \implies \mathrm{RIP}$.
It is also clear that $\mathrm{RPP} \implies \mathrm{FOP}$,
because a node variable $x_c$ is in equilibrium if and only if
$x_c(t) \equiv x_c(t+\theta)$ for all $\theta \in \Sone$ (see
Section~\ref{S:FOC} for a full discussion and a more general version of the result).

The most surprising implication is that $\mathrm{RSP} \implies \mathrm{RPP}$.
This follows using the `doubling' trick of Golubitsky \etal~\cite{GRW12},
which converts a phase relation on $\GG$ into a synchrony relation 
on two disjoint copies $2\GG$, for a special
periodic orbit on a 2-torus foliated by periodic orbits. Some care is needed to show that 
$2\GG$ has suitable versions of the required properties, see Section~\ref{S:RPC}.

\section{Summary of Paper}

In~\cite{GRW10,GRW12} the conjectures are proved in the order
\[
\mbox{Rigid Input}\ \implies\ \mbox{Full Oscillation}\
	 \implies\ \mbox{Rigid Synchrony}\ \implies\ \mbox{Rigid Phase}
\]
The viewpoint there is local: local versions are more tractable, and
lead to stronger results while avoiding technical obstacles.
Here we also consider local versions, for similar reasons, but we
employ a different strategy:
\beqn
&& \mbox{Rigid Synchrony}\ \implies\ \mbox{Rigid Phase}\ \implies\ \mbox{Full Oscillation} \\
&& \qquad \Downarrow \\
&& \mbox{Rigid Input}
\eeqn
The key result for the method employed in this paper is therefore the local form of
(b), the Local Rigid Synchrony Property.

Consider an admissible ODE $\dot x = f(x)$, where $x = (x_1, \ldots, x_n)$,
and write it in components as $\dot x_c = f_c(x)$, where $1 \leq c \leq n$.
Assume, for a contradiction, that this ODE has a hyperbolic periodic orbit $\X=\{x(t)\}$ with
a rigid synchrony pattern $\bowtie$. Any such orbit satisfies the condition
\[
c \bowtie d \implies x_c(t) = x_d(t)\quad \forall t \in \R
\]
which implies that $x_c$ satisfies two equations:
\[
\dot x_c = f_c(x) \qquad
\dot x_c = f_d(x)
\]
If the colouring is balanced, these equations are identical, but if it
is unbalanced, they are formally inconsistent --- they involve different functions
or the same function evaluated at different points. This does not of itself create a contradiction,
because $f_c$ and $f_d$ might agree on $\X$ while
being distinct elsewhere. 
It seems highly implausible that such
a situation can persist under all small admissible perturbations, but proving
that is another matter. In the unbalanced case, the result of this substitution is 
an `overdetermined ODE' (OODE), with more equations than unknowns,
and the method of proof that we employ is to construct
perturbations that exploit the formal inconsistency of the OODE to
derive a contradiction.

Our method requires an additional assumption on the orbit $\X$:  
`strong hyperbolicity' (Section~\ref{S:SH}), or the weaker property of
`stable isolation' (Section~\ref{S:SI}). We can establish this property
rigorously for some networks and colourings.
Heuristically, this extra property is plausible, and hyperbolic periodic orbits that lack it
`ought to be' of infinite codimension, hence highly non-generic. However,
we are unable to prove this in full generality with current methods. For further discussion
see Sections~\ref{S:SH} and \ref{S:SI}.

As motivation, and to provide a simple example of the
proof technique, Section~\ref{S:3NE} considers the special case of
a 3-node directed ring with two arrow types. In this network, admissible diagonal maps
are not strongly admissible, so the results of Golubitsky \etal~\cite{GRW10,GRW12} do not
apply. Using the relevant OODEs, we prove the Rigid Synchrony Conjecture for this network.
This example emphasises the central role of the Kupka-Smale Theorem.

Section~\ref{S:DN} recalls
the relevant features of the basic formalism of coupled cell networks (henceforth just `networks')
from~\cite{GST05,SGP03}, including properties of the quotient network
by a balanced colouring. An important point is a simple characterisation of admissible
maps in Proposition~\ref{P:char_admiss}, originally proved
in~\cite[Proposition 4.6]{SGP03}, which reduces admissibility to invariance under
the appropriate vertex group.

Section~\ref{S:QQ} sets up a generalisation of the usual quotient network
construction for a balanced colouring $\bowtie$, by throwing away the balance condition.
The resulting `quasi-quotient' depends on a choice of representatives
$\RR$ for the colouring, and fails to have most of the useful properties of the quotient.
However, it retains two key properties in relation to states with synchrony
pattern $\bowtie$: such states induce solutions for the induced ODE on the quasi-quotient,
and solutions of the induced equation that also satisfy a set of constraint equations
lift to solutions of the original ODE with synchrony pattern $\bowtie$.

Section~\ref{S:PRH} reviews standard results concerning hyperbolicity
and genericity in general dynamical systems, provides rigorous definitions of the $C^1$ norm
and rigidity, relates these concepts to network dynamics, and introduces
two properties that are central to the methods of this paper: stable isolation and
strong hyperbolicity. It ends with a discussion of the Kupka-Smale Theorem and
network analogues.

Section~\ref{S:LR} begins the general programme to prove the
Local Rigid Synchrony Conjecture for any network under the assumption of strong hyperbolicity. Here we define a local
version of rigid synchrony, needed to set up the proof. We discuss
lower semicontinuity of colourings, the existence of generic points for
local rigid synchrony, and technical obstacles that arise if instead
we try to work with global rigid synchrony.

Section~\ref{S:CAP} describes a method for constructing admissible
perturbations with small support and small $C^1$ norm. The construction
uses bump functions and a symmetrisation technique.

The Local Rigid Synchrony Conjecture, which lies at the heart of this paper,
is proved in Section~\ref{S:LRSC} under the hypothesis of strong hyperbolicity.  
The main obstacle to finding such a proof has always been to gain enough control
over how the perturbed periodic orbit $\tilde\X$ moves 
when the vector field $f$ is perturbed. Previous
attacks on the conjecture employ various strategies to do this, such as
constructing flows geometrically~\cite{SP07} or using 
perturbations related to adjacency matrices and delicate estimates 
for integrals along the periodic orbit~\cite{GRW10}.
Instead, we analyse the quasi-quotient $\GG^\RR$ for a set of representatives $\RR$
of the synchrony colouring and the structure of the
resulting OODE. This leads to an induced ODE for $\GG^\RR$ on the
synchrony space $P^\RR$, together with
constraint equations. We retain control of $\tilde\X$ by constructing perturbations that
leave the corresponding orbit in $P^\RR$ unchanged, assuming local rigidity.
Indeed the constraint equations created by local rigidity imply that
$\tilde\X = \X$ for sufficiently small perturbations of that kind, 
so the periodic orbit in $P$ is also unchanged.
The constraint equations then lead to a contradiction
when the synchrony colouring is unbalanced.
The Rigid Input Property follows immediately because balanced
colourings refine input equivalence.

Section~\ref{S:GRS} parlays this local theorem into a proof of the standard
global version of the Rigid Synchrony Property, using the lattice of
colourings \cite{S07}.

The Local Rigid Phase Property is deduced in Section~\ref{S:RPC}
using the same trick that underlies the proof in~\cite{GRW12}, 
namely, form two disjoint copies of the network and
convert a phase relation into a synchrony relation. The 
Local Rigid Synchrony Property does not apply directly, because doubling
the network destroys hyperbolicity and strong hyperbolicity;
instead of being isolated, periodic orbits
defined by phase-shifted pairs foliate a 2-torus. However, rigidity of the phase
shift gives a {\em canonical} choice of perturbed periodic
orbit among those on the torus. The proof of the Local Rigid Synchrony Conjecture
generalises directly to this situation, and the conclusion that any 
rigid synchrony relation is balanced implies the Local Rigid Phase Conjecture.

We deduce a local version of the Full Oscillation Conjecture in Section~\ref{S:FOC}. 
The main observation is that
if a periodic state of the network is steady at some node, for some time interval $J$,
then on $J$ that node
is phase-related to itself by all possible phase shifts. 

Section~\ref{S:CAGHKT} provides a brief discussion of implications 
for analogues of the $H/K$ Theorem in equivariant dynamics, 
whose proofs, given in~\cite{SP08,GRW12} are contingent upon the four 
Rigidity Properties. These results are striking consequences of those properties
because they classify all possible locally or globally rigid synchrony and phase 
patterns. In particular they show that rigid phase patterns arise from cyclic group
symmetries of a quotient network (or its completion if the quotient is
not transitive).

In special cases, strong hyperbolicity of the periodic orbit
can be proved to be generic using current techniques. In Section~\ref{S:12C} we carry out this programme for
1- and 2-colourings, and deduce the Rigidity Properties for all networks with
at most 3 nodes. Here a key role is played by the notion of ODE-equivalence:
distinct networks with the same space of admissible maps. We classify
2-node networks up to ODE-equivalence.

Finally, we remark on the strong link that emerges from the methods of this paper,
relating the four Rigidity Properties to results of Kupka-Smale type for networks,
and point out that if a counterexample to any of the Rigidity Conjectures exists,
the periodic orbit $\X$ must satisfy conditions that would be extremely degenerate in a
general dynamical system.

\section{3-Node Example}
\label{S:3NE}

We set the scene by analysing a simple example, used later to motivate the
proof of the Rigid Synchrony Conjecture for strongly hyperbolic periodic orbits. 
(For this network, strong hyperbolicity can be proved.)
To focus on the main idea we postpone
formal definitions of the terminology to Section~\ref{S:DN}. 
The discussion should be clear without these.
The main feature of the general case that does not arise for 
this example is invariance under
vertex groups. This step requires a straightforward symmetrisation, Section~\ref{S:symmetrisation}.

Figure~\ref{F:3node_ring} shows a network $\GG$ with three nodes,
$\CC = \{1,2,3\}$. There are two arrow types, solid and dashed.
The shading on the nodes indicates `input type', formalised below.
Each of nodes 1 and 2 receives its input from a single solid arrow, so they have the same
input type. Node 3 receives its input from a single dashed arrow, so it has a
different input type.

A network is {\em semihomogeneous} if state equivalent nodes are input equivalent.
This class includes all homogeneous networks and all fully inhomogeneous ones, together with many others. The proofs in \cite{GRW10,GRW12} are valid for
all semihomogeneous networks.
However, there exist networks that are not semihomogeneous,
the simplest example being $\GG$. Indeed,  
all nodes of $\GG$ are cell (or state) equivalent, but there are two distinct
input equivalence classes, $\{1,2\}$ and $\{3\}$.
Therefore the results of~\cite{GRW10, GRW12} do not apply to $\GG$. 
However, by constructing suitable admissible perturbations, we
prove that in fact it does have all four Rigidity Properties. 
The central role of the Kupka-Smale Theorem arises naturally from the method.

\label{ex:3ring}
\begin{figure}[!htb]
\centerline{%
\includegraphics[width=.18\textwidth]{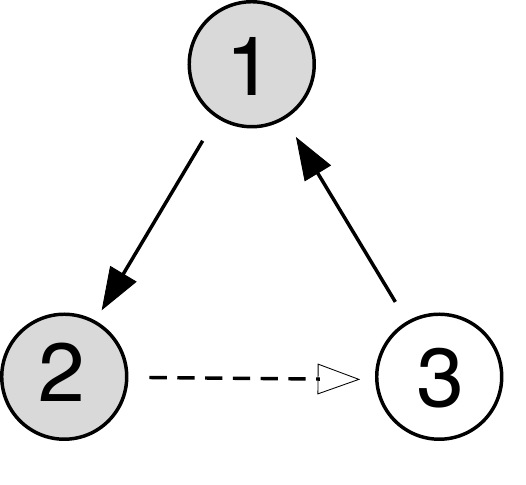}
}
\caption{\label{F:3node_ring} A $3$-node directed ring $\GG$, with two arrow types,
that is not semihomogeneous.}
\end{figure}

Admissible ODEs for $\GG$ (defined in Section~\ref{S:AODE}) have the form
\begin{equation}
\label{e:3nodeODE}
\begin{array}{rcl}
\dot{x}_1 &=& f(x_1,x_3) \\
\dot{x}_2 &=& f(x_2,x_1) \\
\dot{x}_3 &=& g(x_3,x_2)
\end{array}
\end{equation}
Here the variables $x_c$ lie in {\em node spaces} $P_c$, which we
take to be real vector spaces $\R^{k_c}$.
In order for \eqref{e:3nodeODE} to respect the network architecture,
the domains and ranges of these functions must be:
\beqn
f &:& P_1 \times P_3 \to P_1 \\
f &:& P_2 \times P_1 \to P_2 \\
g&:& P_3 \times P_2 \to P_3
\eeqn
The function $f$ occurs twice, so the domains and ranges in the two cases
must coincide. That is, $P_1 = P_2$ and $P_3 = P_1$. In other words,
the network structure and the definition of admissible ODEs requires
$P_1 = P_2 = P_3$. This is an example of `state equivalence', which
replaces the usual `compatibility conditions' on head and tail nodes of arrows
in~\cite{GST05, SGP03}.
The reasons for this change are discussed in Section~\ref{S:AODE},
and in greater detail in~\cite{GS22}.

\subsection{Strong Admissibility}

We find the strongly admissible maps for this network and verify
their composition properties directly. 
Some notation defined in Section~\ref{S:DN} is convenient. To avoid complications
concerning state equivalence, we use the previous notation $\sim_C$
for cell equivalence and $\sim_I$ for input equivalence.
here. (In this case, cell equivalence with the previous compatibility
conditions turns out to be the same as state equivalence.)

Strongly admissible maps are defined in~\cite{GST05} as `diagonal'  maps 
\begin{equation}
\label{E:diag_phi}
g(x) = (g_1(x_1), \ldots, g_n(x_n))
\end{equation}
such that $g_i = g_j$ whenever $i \sim_C j$. It is proved there that
if $f$ is admissible and $g$ is strongly admissible, then both
$f \compose\, g$ and $g\, \compose f$ are admissible.

In~\cite{GRW10, GRW12, SP07} it was tacitly assumed that the same composition
properties hold if $g_i = g_j$ whenever $i \sim_I j$; that is, if $g$ is
diagonal and admissible. We show that this statement is false for $\GG$,
implying that new methods are required to 
prove any of the Rigidity Conjectures for this network. 

Since the two solid arrows have the same type, the previous compatibility
condition requires all three nodes to have the same
cell type. The network $\GG$ has
two different input types $\{1,2\}$ and $\{3\}$, so $\sim_C$ is different from $\sim_I$.
Admissible diagonal maps~\eqref{E:diag_phi}
have $g_1 = g_2 \neq g_3$. Strongly admissible
maps, as defined in~\cite[Definition 7.2]{GST05}, have $g_1 = g_2 = g_3$.

First, we show that the only diagonal maps $g$ that compose
on the right with admissible maps $f$ to give admissible maps 
are maps $g$ where $g_1 = g_2 = g_3$. 
That is, $g_i=g_j$ whenever $i \sim_C j$, in accordance with~\cite[Lemma 7.3]{GST05}. 
We have
\[
f(x) = \Matrix{f_1(x_1,x_3) \\ f_1(x_2,x_1) \\ f_2(x_3,x_2)} \qquad
	f \compose g(x) = \Matrix{f_1(g_1(x_1),g_3(x_3)) \\ f_1(g_2(x_2),g_1(x_1)) \\ f_2(g_3(x_3),g_2(x_2))}
\]
If $f \compose g$ is admissible for all $f$, then the first two components yield
\[
f_1(g_2(x_2),g_1(x_1)) = f_1(g_1(x_2),g_3(x_1))
\] 
Take $f_1(u,v) = u$ to give
$g_2(x_2) = g_1(x_2)$, and then $f_1(u,v) = v$ to give $g_1(x_1) = g_3(x_1)$.
Therefore $g_1=g_2=g_3$.

Conversely, any map of this form composes on the right to give an admissible map.

In contrast, we now show that the maps $g$ that compose
on the left with admissible maps $f$ to give admissible maps
are maps $g$ where $g_1 = g_2$. That is, $g_i=g_j$ whenever $i \sim_I j$.
These are precisely the admissible diagonal maps.
Now
\[
	g \compose f(x) = \Matrix{g_1(f(x_1,x_3)) \\ g_2(f(x_2,x_1)) \\ g_3(f(x_3,x_2))}
\]
If $g \compose f$ is admissible for all $f$, then the first two components yield
\[
g_2(f(x_1,x_3)) = g_1(f(x_1,x_3))
\] 
Take $f(u,v) = u$, obtaining $g_2(u) = g_1(u)$. Therefore $g_1=g_2$. 

Conversely, any map of this form composes on the left to give an admissible map.

\subsection{Construction of Suitable Perturbations}
\label{S:CSP}

Since the network $\GG$ of Figure~\ref{F:3node_ring} is not semihomogeneous, 
the results of~\cite{GRW10, GRW12, SP08} do not apply.
Nevertheless, we now prove by a different method
that $\GG$ has the Local Rigid Synchrony Property. As mentioned in
Section~\ref{S:IBC} and proved in Sections~\ref{S:RPC} and \ref{S:FOC},
it therefore has the other three Local Rigidity Properties as well.
The method used for this example motivates the subsequent approach
to synchrony patterns on arbitrary networks. For this
network we obtain a complete proof, because we can apply
the standard Kupka-Smale Theorem and the equivariant version
of Field~\cite{F80} for the symmetry group $\Z_2$. In the general case
some network version, not yet proved, is required: this is why we impose
strong hyperbolicity in the bulk of this paper.

We discuss the first case in detail, to establish the logic, and
provide less detail for the other cases.

Admissible ODEs take the form $\dot{x} = f(x)$ with certain conditions on the components $f_c$.
We consider an arbitrary $1$-parameter family of
perturbations $\dot{x} = f(x)+\eps p(x)$, where $p$ is also admissible.
Explicitly, admissibility requires:
\begin{equation}
\begin{array}{rcl}
\label{E:3nodeODEpert}
\dot{x}_1 &=& f(x_1, x_3) + \eps p(x_1, x_3)\\
\dot{x}_2 &=& f(x_2, x_1)  + \eps p(x_2, x_1) \\
\dot{x}_3 &=& g(x_3, x_2)  + \eps q(x_3, x_2)
\end{array}
\end{equation}
where $p,q$ are arbitrary smooth functions because vertex symmetries are
trivial. We can choose $p,q$ to be bounded
using bump functions, so the perturbation is $C^1$-small when $\eps \ll 1$. See
Section~\ref{S:BF}.

The only balanced colouring is the trivial one with all nodes of different colours.
We show that for every other colouring $\bowtie$, rigid synchrony leads to a contradiction. 
This is obtained by applying the Kupka-Smale Theorem
(or Field's equivariant version) for certain $1$- and $2$-node networks, when $p$ is constructed
to have certain properties that depend on the colouring $\bowtie$. Throughout we assume
only that the synchrony pattern $\bowtie$ is valid for $t$ in some non-empty open interval $J$,
and choose a point $t_0 \in J$.

\vspace{.1in}
\noindent
Case (A): $\bowtie = \{\{1,2,3\}\}$.

(Here and elsewhere we write $\bowtie$ as a partition of $\CC$.)

For given $\eps$, any (periodic) orbit $\X^\eps = (x_1^\eps,x_2^\eps,x_3^\eps)$ 
with this synchrony pattern has the fully synchronous form
\[
(u^\eps(t),u^\eps(t),u^\eps(t))
\]
We assume that $\X^0$ is hyperbolic and $0 < \eps \ll 1$.
Taking a suitable Poincar\'e section $\Sigma$ at $x(t_0)$ and setting initial conditions
by requiring $x^\eps(t_0) \in \Sigma$, we can assume that $x^\eps(t)$ varies
continuously (indeed, by the Implicit Function Theorem applied to a
first-return map, smoothly) with $\eps$ and $t$.

Substituting $x_1^\eps=x_2^\eps=x_3^\eps = u^\eps$ in~\eqref{E:3nodeODEpert}, 
this state must satisfy the conditions
\begin{equation}
\label{E:3node_eps}
\begin{array}{rcl}
\dot u^\eps &=& f(u^\eps,u^\eps) + \eps p(u^\eps,u^\eps)\\
\dot u^\eps &=& f(u^\eps,u^\eps) + \eps p(u^\eps,u^\eps)\\
\dot u^\eps &=& g(u^\eps,u^\eps) + \eps q(u^\eps,u^\eps)
\end{array}
\end{equation}
The first component determines $u^\eps$ uniquely, for given initial conditions.
The second is the same as the first. The third is different, and potentially
contradictory; we use it to derive a contradiction.

The projection $\U^0=\{(u^0(t)\}$ of $\X^0$ into $P_1=P_2=P_3$ is a periodic orbit of
 the `induced ODE'
\begin{equation}
\label{E:induced_3node_1}
\dot{y} = f(y,y)
\end{equation}
We pre-prepare $f$ so that $\U^0$ is hyperbolic on $P_1$. This follows from the
Kupka-Smale Theorem, since \eqref{E:induced_3node_1} is 
an ODE on $P_1$ and any perturbation $p(y)$ can be expressed in the form $g(y,y,y)$.
By rigidity, the local synchrony pattern $\{\{1,2,3\}\}$ applies to this perturbed ODE
provided we make the perturbation small enough. The open interval $J$ may have to
be replaced by a smaller open interval $J'$ where $t_0 \in J' \subseteq J$.

Having pre-prepared $f$ and $\X^0$ to make $\U^0$ hyperbolic on $P_1$,
we can realise the contradiction as follows.
To simplify notation, write
\[
u^* = u^0(t_0)
\]
Define $p(x) \equiv 0$ for all $x$, and define $q$ so that $q(u^*,u^*) \neq 0$.
This is possible because $p,q$ are arbitrary independent smooth maps.
The first equation now becomes 
\[
\dot u^\eps = f(u^\eps,u^\eps)
\]
which is the same as the unperturbed equation. Therefore, near $(u^*,u^*)$,
the periodic orbits $u^\eps$
and $u^0$ satisfy the same ODE, and $u^\eps \to u^0$
as $\eps \to 0$.

Since $\U^0$ is hyperbolic on $P_1$, there is a locally unique periodic orbit
near $u^0(t)$. But $u^\eps \to u^0$ as $\eps \to 0$. Therefore, for small $\eps>0$,
we have $u^\eps(t) \equiv u^0(t)$ 
for all $t$ near $t_0$ (which implies equality for all $t$ by uniqueness of solutions to ODEs).
Therefore $\dot u^\eps(t) \equiv \dot u^0(t)$. Set $t = t_0$ to obtain
\[
g(u^*,u^*) + \eps q(u^*,u^*) = g(u^*,u^*) + 0. q(u^*,u^*) = g(u^*,u^*)
\]
This implies that $\eps q(u^*,u^*)=0$ for some $\eps > 0$. 
However, we chose $q$ so that this is false. This contradiction implies
that $\X$ cannot have local rigid synchrony pattern $\{1,2,3\}$.

\vspace{.1in}
\noindent
Case (B): $\bowtie = \{\{1,2\},\{3\}\}$.

By Case (A) we may assume that $\bowtie$ is the finest colouring
such that $\X$ has local synchrony pattern $\bowtie$ at $t_0$.
We follow similar reasoning, and omit routine details. 

For given $\eps$, any (periodic) orbit $\X^\eps = (x_1^\eps,x_2^\eps,x_3^\eps)$ 
with this local synchrony pattern has the form
\[
(u^\eps(t),u^\eps(t),v^\eps(t))
\]
Substitute $x_1^\eps=x_2^\eps= u^\eps$ and $x_3^\eps = v^\eps$
 in~\eqref{E:3nodeODEpert} to obtain
\begin{eqnarray}
\label{E:(12)(3)a}
\dot u^\eps &=& f(u^\eps,v^\eps) + \eps p(u^\eps,v^\eps)\\
\label{E:(12)(3)b}
\dot u^\eps &=& f(u^\eps,u^\eps) + \eps p(u^\eps,u^\eps)\\
\label{E:(12)(3)c}
\dot v^\eps &=& g(v^\eps,u^\eps) + \eps q(v^\eps,u^\eps)
\end{eqnarray}
Components \eqref{E:(12)(3)a} and \eqref{E:(12)(3)c} determine $u^\eps$ and $v^\eps$
uniquely, for given initial conditions.
Equation \eqref{E:(12)(3)b} is formally different from \eqref{E:(12)(3)a}, 
and potentially contradictory.

The perturbation terms in \eqref{E:(12)(3)a} and \eqref{E:(12)(3)c}
have the form $(p(u^\eps,v^\eps),q(v^\eps,u^\eps))$, which is
a general vector field on $P_1 \times P_3$ with variables $u^\eps,v^\eps$.
We can therefore apply the Kupka-Smale Theorem (for a general dynamical system)
to pre-prepare $f, \X$ so that the projected orbit
$\U^0 = \{(u^0(t),v^0(t))\}$ is hyperbolic on $P_1\times P_3$.
We retain the same notation.

Choose a time $t_1 \in J$ so that if $u^* = u^0(t_1)$ and $v^* = v^0(t_1)$
then $u^* \neq v^*$. 
If this is not possible then we are in Case (A), already dealt with; this is also
contrary to $\bowtie$ being the finest local synchrony pattern.

Define $q \equiv 0$, and define $p$ so that $p(u, v) \equiv 0$
in a neighbourhood of $(u^*, v^*)$,
but $p(u^*,u^*) \neq 0$. This is possible since $u^* \neq v^*$, so
$(u^*,u^*) \neq (u^*,v^*)$. Indeed, we can use a bump function to
make $p$ vanish outside
a small neighbourhood of $(u^*,u^*)$, but be nonzero near $(u^*,u^*)$.

When $(u,v)$ is near $(u^*,v^*)$, equations \eqref{E:(12)(3)a} and \eqref{E:(12)(3)c} reduce to
\begin{eqnarray}
\label{E:eq1_eps}
\dot u^\eps &=& f(u^\eps,v^\eps) \\
\label{E:eq2_eps}
\dot v^\eps &=& g(v^\eps,u^\eps)
\end{eqnarray}
which is the same ODE as the unperturbed equation, but with variables 
$u^\eps, v^\eps$ in place of $u^0, v^0$.

As before, the pre-preparation guarantees
local uniqueness of perturbed periodic orbits on $P_1 \times P_3$, 
so this implies that $p(u^*, u^*) = 0$, a contradiction.

\vspace{.1in}
\noindent
Case (C): $\bowtie = \{\{1,3\}, \{2\}\}$

The argument has a similar structure. Synchronous orbits have the form
\[
(u^\eps(t),v^\eps(t),u^\eps(t))
\]
Substitute $x_1^\eps = x_3^\eps= u^\eps$ and $x_2^\eps = v^\eps$
 in~\eqref{E:3nodeODEpert} to obtain
\begin{eqnarray}
\label{E:(13)(2)a}
\dot u^\eps &=& f(u^\eps,u^\eps) + \eps p(u^\eps,u^\eps)\\
\label{E:(13)(2)b}
\dot v^\eps &=& f(v^\eps,u^\eps) + \eps q(v^\eps,u^\eps)\\
\label{E:(13)(2)c}
\dot u^\eps &=& g(u^\eps,v^\eps) + \eps p(u^\eps,v^\eps)
\end{eqnarray}
Components \eqref{E:(13)(2)b} and \eqref{E:(13)(2)c} determine $u^\eps$ and $v^\eps$
uniquely, for given initial conditions.
Equation~\eqref{E:(13)(2)a} is formally different from \eqref{E:(13)(2)a}.
When $\eps = 0$ equations \eqref{E:(13)(2)b} and \eqref{E:(13)(2)c}
are a general ODE on $P_1 \times P_2$, and $(p,q)$ is an arbitrary
vector field on $P_1 \times P_2$. We can use the Kupka-Smale Theorem to
pre-prepare $f,g$ so that the periodic orbit $\U^0 = \{u^0(t),v^0(t)\}$ 
is hyperbolic on $P_1 \times P_2$.

Choose the perturbation so that $q \equiv 0$, $p(v,u) \equiv 0$ near $(v^*,u^*)$,
but $p(u^*,u^*) \neq 0$. When $(u,v)$ is near $(u^*,v^*)$, the orbit $(u^\eps,v^\eps)$ satisfies
the same ODE as $(u^0,v^0)$, and hyperbolicity implies local uniqueness,
so $(u^\eps(t),v^\eps(t)) = (u^0(t),v^0(t))$ near $t_1$. As before, this
implies that $p(u^*,u^*) = 0$, a contradiction.

\vspace{.1in}
\noindent
Case (D): $\bowtie = \{\{2,3\},\{1\}\}$

Again the argument has a similar structure. Synchronous orbits have the form
\[
(u^\eps(t),v^\eps(t),v^\eps(t))
\]
Substitute $x_1^\eps = u^\eps$ and $x_2^\eps =  x_3^\eps= v^\eps$
 in~\eqref{E:3nodeODEpert} to obtain
\begin{eqnarray}
\label{E:(23)(1)a}
\dot u^\eps &=& f(u^\eps,v^\eps) + \eps p(u^\eps,v^\eps)\\
\label{E:(23)(1)b}
\dot v^\eps &=& f(v^\eps,u^\eps) + \eps q(v^\eps,u^\eps)\\
\label{E:(23)(1)c}
\dot v^\eps &=& g(v^\eps,v^\eps) + \eps p(v^\eps,v^\eps)
\end{eqnarray}
Components \eqref{E:(23)(1)a} and \eqref{E:(23)(1)b} determine $u^\eps$ and $v^\eps$
uniquely, for given initial conditions.
Equation~\eqref{E:(23)(1)c} is formally different from \eqref{E:(23)(1)b}.

When $\eps = 0$ equations \eqref{E:(23)(1)a} and \eqref{E:(23)(1)b}
are a general $\Z_2$-equivariant ODE on $P_1 \times P_2$,
where $\Z_2$ swaps $u^\eps$ and $v^\eps$,
and $(p,q)$ is an arbitrary $\Z_2$-equivariant
vector field on $P_1 \times P_2$. We can use Field's equivariant
Kupka-Smale Theorem to
pre-prepare $f,g$ so that the periodic orbit $\U^0 = \{u^0(t),v^0(t)\}$ 
is hyperbolic on $P_1 \times P_2$.

Choose the perturbation so that $q \equiv 0$, $p(u,v) \equiv 0$ near $(u^*,v^*)$,
but $p(v^*,v^*) \neq 0$. When $(u,v)$ is near $(u^*,v^*)$, the orbit $(u^\eps,v^\eps)$ satisfies
the same ODE as $(u^0,v^0)$, and hyperbolicity implies local uniqueness,
so $(u^\eps(t),v^\eps(t)) = (u^0(t),v^0(t))$ near $t_1$. As before, this
implies that $p(v^*,v^*) = 0$, a contradiction.

We conclude that the only local
rigid synchrony pattern is trivial, verifying the Local Rigid Synchrony Property for $\GG$.
The other three Local Rigidity Properties follow, as outlined in
Section~\ref{S:IBC} and discussed in detail in Sections~\ref{S:RPC} and \ref{S:FOC}.

\begin{remark}\em
Since the Rigidity Conjectures were first stated it has been clear that
the main obstacle to proving them is to retain enough control over
the behaviour of the perturbed periodic orbit. In~\cite{GRW10,GRW12}
this is achieved by delicate estimates. The method employed above
controls the perturbed periodic orbit by {\em not changing it}. Obviously a zero
perturbation has this property, but the admissible perturbation that we construct
changes the constraint equations. This construction leads to a contradiction
when the local synchrony
colouring is rigid but not balanced.
 
This example suggests that a similar type of perturbation of the induced
ODE on the synchrony space might be used
for an arbitrary network, and that the main obstacle is to prove
a suitable version of the Kupka-Smale Theorem, so that (assuming rigidity) the
perturbed periodic orbit is the same as the unperturbed one, but
the constraints of synchrony lead to a contradiction. In the rest of the paper
we show that this approach succeeds, modulo a version of Kupka-Smale
for networks. 
\qed\end{remark}

\section{Formal Definition of a Network}
\label{S:DN}

We now proceed to the general case.
First, we briefly recall some basic concepts of the `coupled cell' network formalism
introduced in~\cite{SGP03}
and generalised in~\cite{GST05}, and state some standard notations, definitions, and results.
For further details, see~\cite{GRW10,GS06,GS22,GST05, S20}.
We introduce a further slight generalisation, which resolves
the dual role of `cell equivalence' in the previous formalism. All of
the standard theory extends to this more general setting, which applies to
a wider range of ODEs with network structure. Full details are 
presented in~\cite{GS22}; everything in this paper is valid in this more general setting.

We begin with the formal setting for networks:
\begin{definition}\em
\label{d:networkdef}
A {\em network} $\GG = (\CC,  \sim_C, \AA,  \sim_A, \HH, \TT)$ consists of:

(a) A finite set of {\em nodes} $\CC$ and a {\em node-type} assigned to each node.  Write
\[
c \sim_C d
\]
if $c, d\in\CC$ have the same node-type.

(b) A finite set of {\em arrows} $\AA$ and an {\em arrow-type}
assigned to each arrow.  Write
\[
a \sim_A b
\]
if $a, b \in \AA$ have the same arrow-type.  (The previous notation 
uses $\EE$ for $\AA$ and $\sim_E$ for $\sim_A$.)

The node type can be viewed as a distinguished `internal' arrow-type.

(c) Each $a \in \AA$ has a {\em head node} $\hd(a)$ and
 a {\em tail node}
$\tl(a)$ in $\CC$. When viewing a node $c\in\CC$ as an internal arrow, 
we define $\hd(c)=c=\tl(c)$.
\qed\end{definition}

\begin{remark}\em
\label{R:compatibility}
Readers familiar with the literature will observe that we have
omitted from this definition the standard `compatibility condition' that arrow-equivalent
arrows have node-equivalent heads and node-equivalent tails. 
This condition combines two roles for cell-equivalence that are
better kept distinct, namely equality of state spaces (which we call $\sim_S$ below)
and equality of the distinguished `internal arrows' on nodes 
(where we retain the notation $\sim_C$). In its place,
we impose a natural condition on the state spaces (or phase spaces) assigned to nodes,
see Definition~\ref{D:admiss}(a).
\qed\end{remark}

\subsection{Input Sets and Tuples}

\begin{definition}\em  \label{D:input}
Let $c, d$ be nodes in $\CC$. 

(a)	The {\em input set} of $c$ is the set
$I(c)$ of all arrows $a\in\AA$ such that $\hd(a) =c$.

(b) An {\em input isomorphism} is an arrow-type preserving bijection
$\beta:I(c) \to I(d)$.
That is, $c\sim_C d$ and $a \sim_A \beta(a)$ for all $a \in I(c)$.  

(c) Two nodes $c$ and $d$ are {\em input isomorphic} 
or {\em input equivalent} if there exists an input isomorphism from $I(c)$ to $I(d)$. 
In this case we write
\[
c\sim_I d
\]
\qed\end{definition}

The set of input isomorphisms from $c$ to $d$ is denoted by $B(c,d)$. 
The disjoint union of these sets 
\begin{equation} \label{e:groupoidn}
\BB = \dot\bigcup_{c,d\in\CC} B(c,d)
\end{equation}
is a groupoid, see Brown~\cite{B87}, Higgins~\cite{H71}, and~\cite{GS06,GST05,SGP03}.

\subsection{Redundancy}
\label{S:redundancy}

The definitions of node- and arrow-types, as stated, allow nodes or arrows
to be assigned the same type even when they are not related by an input isomorphism ---
that is, they are in different groupoid orbits.
This {\em redundancy} is often convenient, especially when drawing
network diagrams. It does not affect the class of admissible maps, which
depends only on the input isomorphisms,
but it can cause problems in some constructions
and introduces an ambiguity into the definition of the adjacency matrix
for a given arrow type. Redundancy can be avoided by
requiring the types to be the same if and only if the nodes or arrows
are related by an input isomorphism. The resulting network is
said to be {\em irredundant}, and we assume this throughout.

\subsection{Admissible Maps and ODEs}
\label{S:AODE}

We now define admissible maps and ODEs, and state an equivalent property
that is central to this paper.
 
Assign to each node $c \in \CC$ a {\em node space} $P_c$. This
is usually taken to be a real vector space $\R^{k_c}$, and we make this assumption throughout
the paper. The overall state space of the {\em network system} (or {\em coupled cell system})
of ODEs is
\[
P = P_1 \times \cdots \times P_n
\]
In node coordinates, a map $f: P \to P$ has components $f_c$ for $c \in \CC$ such that
\[   
f_c: P \rightarrow P_c
\]

\begin{remark}\em
More generally,  node spaces can be 
$C^\infty$ manifolds, Field~\cite{F04}. In {\em phase oscillator} models
all node spaces are the circle, so $P_c = \Sone$.
The methods employed in this paper probably generalise to manifolds. However,
Golubitsky \etal~\cite{GMS16} show that the topology
of node spaces can change the list of possible phase patterns in the $H/K$ Theorem, 
so it should not be assumed that all of the results proved here automatically remain
valid when node spaces are manifolds, or that they are independent of their topology.
\qed\end{remark}

In Example~\ref{ex:3ring} we noted that in order for admissible ODEs for make sense,
certain equalities are forced on node state spaces. These equalities
arise whenever nodes $c \neq d$ are input isomorphic. For any input arrow
$e \in I(c)$, and to $e' = \beta(e) \in I(d)$ where $\beta \in B(c,d)$, we require
\begin{equation}
\label{E:state_equiv}
P_{\hd(e)} = P_{\hd(e')} \qquad P_{\tl(e)} = P_{\tl(e')}
\end{equation}
The first equation reduces to $P_c = P_d$, so input isomorphic nodes
must have the same state space. However, the second equation can impose
further equalities. We say that $i,j$ are {\em state-equivalent}, written
$i\sim_S j$, if the above equations, taken over all $c,d$, require $P_i = P_j$.
This is the transitive closure of the relation on $\CC$ defined by~\eqref{E:state_equiv}.
It resolves an ambiguity in the usual concept of cell equivalence by
distinguishing between having the same node space, and having the same node dynamic.
It also extends the possible types of network without changing any of the basic 
theorems or proofs \cite{GS22}.

For any tuple of nodes $\mathbf{c} = (c_1, \ldots, c_m)$ we write
\beqn
P_{\mathbf{c}} &=& P_{c_1} \times \cdots \times P_{c_m} \\
x_{\mathbf{c}} &=& (x_{c_1}, \ldots, x_{c_m})
\eeqn

The input set of node $c$ defines an {\em input tuple} of nodes
$I(c) =(c,\tl(i_1),\cdots,\tl(i_\nu))$ 
where the $i_j$ are the arrows satisfying $\HH(i_j)=c$. 
For brevity we follow~\cite{SP07, SP08} and define the $\nu$-tuple of tail nodes 
$x_{T(c)}$ and the space $P_{T(c)}$ by:
\[
x_{T(c)} = x_{\tl(I(c))} = (x_{\TT(i_1)}, \ldots, x_{\TT(i_\nu)} )\in P_{\TT(I(c))} = P_{T(c)}
\]

\begin{definition}\em
\label{D:admiss}
Let $\GG$ be a network. 
A map $f: P \rightarrow P$ is $\GG$-{\em admissible} if:

(a)	 {\em Node Compatibility:} The node state spaces satisfy $P_c=P_d$ whenever $c\sim_S d$.

(b) {\em Domain Condition}: For  every node $c$, there exists a function 
$\hat{f}_c: P_c \times P_{T(c)} \rightarrow P_c$ such that
\[
f_c(x) \equiv \hat{f}_c(x_c, x_{T(c)})
\]
In particular, the domain of $\hat{f}_c$ (which, in effect, is the relevant 
domain of $f_c$) is $P_c \times P_{T(c)}$.

(c) {\em Pullback Condition}:  If nodes $c, d$ are input equivalent, then
for every $\beta \in B(c,d)$: 
\begin{equation}
\label{e:pullbackeq}
\hat{f}_d(x_d, x_{T(d)}) \equiv \hat{f}_c(x_d, \beta^\ast x_{T(d)}) 
\end{equation}
where the {\em pullback map} is defined by:
\begin{equation}  \label{e:pullback_long}
\beta^\ast x_{T(d)} = (x_{\tl(\beta(i_1))}, \ldots, x_{\tl(\beta(i_\nu)}) \in P_{T(c)}
\end{equation}
\qed\end{definition}

In particular, we can apply~\eqref{e:pullbackeq} when $c=d$. This shows that
\[
\hat{f}_c(x_c, x_{T(c)})\ \mbox{is}\ B(c,c)\mbox{-invariant}
\]
where the {\em vertex group} $B(c,c)$ acts trivially on the first coordinate
$x_c$ and permutes the coordinates of $ x_{T(c)}$ according to the
pullback maps~\eqref{e:pullback_long}. 
That is, the action of $\beta$ is:
\begin{equation}
\label{E:beta_action}
(x_c, x_{T(c)}) \mapsto (x_c, \beta^\ast x_{T(c)})
\end{equation}
Triviality of this action on
the first coordinate, and the distinguished nature of that coordinate, are crucial to this paper.

\begin{remarks}\em
(a) The group $B(c,c)$ is finite and is a direct product of symmetric groups,
one for each arrow-type.

(b) From now on it is convenient to omit the hat on $\hat{f}_c$ and consider
$f_c$ as a map $f_c : P_c \times P_{T(c)} \to P_c$.
\qed\end{remarks}

\subsection{Alternative Characterisation of Admissibility}

The definition of pullback maps provides a `coordinate-free'
definition of admissible ODEs.
We now deduce a standard characterisation of admissible maps, based on a specific
choice of coordinates in the domains of component maps $f_c$,
which is more convenient for the purposes of this paper.

Choose an ordering on arrow types, so that arrows of a given type
occur is a block; then order arrows arbitrarily within each block.
Call this a {\em standard ordering} of arrows. It is easy to prove that
the groupoid $\BB$ is generated by all vertex symmetry groups
$B(c,c)$ together with a single {\em transitional map} $\beta_{cd}:I(c) \to I(d)$
for each $c \neq d$ with $c \sim_I d$. Moreover, if input variables for input equivalent nodes
are listed in standard order, the natural transitional map is the identity.
This is why the usual way to represent symmetries of components using an overline on the
relevant input variables is possible~\cite{GST05,SGP03}. 
The overlines correspond to the blocks
of arrows with a given arrow type, and substitution of corresponding
variables gives the identity transition map.

The group $B(c,c)$ acts on the input set
set $I(c)$ by permuting arrows and preserving arrow-type, so it preserves
blocks of arrows in standard order. We can now characterise admissible maps 
in terms of $B(c,c)$-invariance, avoiding explicit reference to pullback maps: 

\begin{proposition}
\label{P:char_admiss}
A map $f:P \to P$ is admissible if and only if, in standard order:

{\em (a)} $f_c$ is invariant under $B(c,c)$ for each $c$ in a set of
representatives of the input equivalence classes.

{\em (b)} $c \sim_I d \implies f_c = f_d$.
\end{proposition}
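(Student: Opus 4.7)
The plan is to translate between two descriptions of admissibility: the intrinsic Pullback Condition of Definition~\ref{D:admiss}(c), which is stated uniformly over the groupoid $\BB$, and the more concrete statements (a) and (b) tied to a fixed standard ordering of arrows. The bridge between the two is the fact, recalled just before the proposition, that $\BB$ is generated by the vertex groups $B(c,c)$ together with a single transitional isomorphism $\beta_{cd}\colon I(c) \to I(d)$ for each ordered pair of distinct input-equivalent nodes, and that under a standard ordering of arrows this transitional map acts as the identity on coordinate tuples (because it simply matches arrows of the same type block-by-block).

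For the forward direction, assume $f$ is admissible. Specialising the Pullback Condition to $d=c$ gives, for every $\beta \in B(c,c)$,
\[
\hat{f}_c(x_c, x_{T(c)}) \equiv \hat{f}_c(x_c, \beta^\ast x_{T(c)}),
\]
which is precisely $B(c,c)$-invariance, so (a) holds. For (b), apply the Pullback Condition to $\beta_{cd}$ with inputs listed in standard order; since $\beta_{cd}^\ast$ acts as the identity on the coordinate tuple under this identification, the condition collapses to $\hat{f}_d = \hat{f}_c$ as functions of the same variables.

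For the backward direction, assume (a) and (b) hold. Given an arbitrary $\beta \in B(c,d)$, set $\gamma := \beta_{dc} \compose \beta \in B(c,c)$, so that $\beta = \beta_{cd} \compose \gamma$ and hence $\beta^\ast = \gamma^\ast$ once coordinates are identified via the standard ordering. Starting from $\hat{f}_d(x_d, x_{T(d)})$, I use (b) to replace $\hat{f}_d$ by $\hat{f}_c$, then apply (a) to insert the pullback by $\gamma$, and finally invoke $\gamma^\ast = \beta^\ast$ to recover the Pullback Condition for the given $\beta$. This establishes admissibility.

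The main subtlety --- really the only point requiring care --- is the bookkeeping surrounding the standard ordering: one must verify that listing arrows by type in a fixed block structure makes every transitional map $\beta_{cd}$ act as the identity permutation on coordinate tuples, so that the bare equality $\hat{f}_c = \hat{f}_d$ in (b) genuinely captures the Pullback Condition specialised to $\beta_{cd}$. Once this identification is granted, the rest of the argument is a routine groupoid-generation computation, and the equivalence of the two characterisations reduces to factoring an arbitrary element of $B(c,d)$ as a vertex-group element followed by the transitional map.
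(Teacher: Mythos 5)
Your proposal is correct and takes essentially the same route as the paper: the paper's proof simply cites \cite[Lemma 4.5 and Proposition 4.6]{SGP03} together with the observation that in standard order the transitional maps $\beta_{cd}$ act as the identity, and your argument is exactly the factorisation of an arbitrary $\beta \in B(c,d)$ through a vertex-group element and a transitional map that those cited results encode. The one point you leave tacit --- that $B(c,c)$-invariance assumed only for a representative of each input class transfers to every node of that class via (b) and the standard-order identification of the vertex groups --- is immediate.
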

\begin{proof}
This follows from~\cite[Lemma 4.5 and Proposition 4.6]{SGP03}, 
with the extra observation that 
when the inputs are in standard order the $\beta_{cd}$
can be taken to be the identity.
\qed\end{proof}

Proposition~\ref{P:char_admiss} implies that
admissible maps can be constructed as follows.
Choose a set of representatives $\mathcal{S}$ for input equivalence. For each
$s \in \mathcal{S}$ let $p_s$ be any smooth $B(s,s)$-invariant map
$p_s : P_s \times P_{T(s)} \to P_s$.
The maps $p_s$ can be chosen independently for each $s$.
In standard order, for all $c \in \CC$, define
\[
p_c = p_s\ \mbox{where}\ s \sim_I c\ \mbox{and}\ s \in \mathcal{S}
\]
The resulting map $p$ is admissible because
\[
p_d(x_d, x_{T(d)}) = p_c(x_d, \beta^\ast x_{T(d)}) = p_c(x_d, x_{T(d)})
\]
when $\beta^\ast = \id$.

\subsection{Balanced Colourings}

A {\em colouring} of a network is a partition of the nodes into
disjoint subsets, the {\em parts}: 
\[
\CC = \CC_1 \ \dot{\cup}\ \CC_2\ \dot{\cup}\ \cdots\ \dot{\cup}\ \CC_m
\]
The {\em colour} $[c]$ of node $c$ is the unique $i$ such that $c \in \CC_i$.
A colouring can also be viewed as an equivalence relation `in same part'
or `same colour'. We pass without comment between these three interpretations,
but mainly refer to colourings. We use the same symbol $\bowtie$ for all three,
and often specify it as a partition.

Associated with any colouring $\bowtie$ is the {\em polydiagonal} (or {\em synchrony space})
\[
\Delta_{\bowtie} = \{x \in P:  x_c = x_d  \iff c \bowtie d \}
\]
The name indicates that this notion is a generalisation of the
usual diagonal subspace $\{(y,y,y, \ldots, y)\}$.
Another common term is {\em synchrony space}.
 
\begin{definition}\em
\label{D:balance}
A colouring is {\em balanced} if whenever $c$ and $d$ have the same colour,
there is a colour-preserving input isomorphism $\beta:I(c) \to I(d)$.
That is, $\tl(e)$ and $\beta(\tl(e))$ have the same colour for all arrows $e \in I(d)$.
Symbolically,
\[
c \bowtie d \implies \tl(e) \bowtie \beta(\tl(e))\quad \forall e \in I(c)
\]
\qed\end{definition}
This concept is central to network dynamics because $\Delta_{\bowtie}$
is {\em flow-invariant}, that is, invariant under {\em any} admissible map, 
if and only if $\bowtie$ is balanced: see~\cite[Theorem 4.3]{GST05} or 
\cite[Theorem 6.5]{SGP03}.
The space $\Delta_{\bowtie}$ is defined even when $\bowtie$ is unbalanced,
but is no longer flow-invariant.

Associated with any balanced colouring of $\GG$ is a {\em quotient network} 
$\GG/\!\!\bowtie$ whose
admissible maps are precisely the restrictions to $\Delta_{\bowtie}$ of the
admissible maps of the original network when $\Delta_{\bowtie}$ is canonically 
identified with $\prod_{s \in \mathcal{S}} P_s$ for a set of
representatives $\mathcal{S}$ of $\bowtie$ \cite[Section 5]{GST05}.
The validity of this theorem requires the multiarrow formalism introduced in that paper;
the differences that occur in the single-arrow formalism are described in~\cite{DS04}.

\subsection{Synchrony and Phase Relations: Sufficient Conditions}
\label{S:SPRSC}

The calculations that motivate the Rigid Synchrony and Rigid Phase Conjectures
combine the pullback condition~\eqref{e:pullbackeq} for admissibility with
equations~\eqref{E:synch_eq} and~\eqref{E:phase_eq}, as follows. From~\eqref{E:synch_eq} 
we obtain $\dot{x}_c(t) \equiv \dot{x}_d(t)$, so
\[
{f}_c(x_c, x_{T(c)}) \equiv {f}_d(x_d, x_{T(d)}) \equiv {f}_c(x_d, \beta^* x_{T(d)})
\]
Therefore a sufficient condition for synchrony of nodes $c,d$ is 
\begin{equation}
\label{E:RS_suff_cond}
x_{T(c)}(t) \equiv \beta^* x_{T(d)}(t) \quad \forall t \in \R\quad \mbox{whenever}\ x_c(t) \equiv x_d(t)
\end{equation}
The Rigid Synchrony Conjecture states that with the additional hypothesis of
rigidity, condition~\eqref{E:RS_suff_cond} is also necessary. This condition
is equivalent to the relation of synchrony being balanced.
Similar reasoning for~\eqref{E:phase_eq} leads to the sufficient condition
\begin{equation}
\label{E:RP_suff_cond}
x_{T(c)}(t) \equiv \beta^* x_{T(d)}(t+\theta) \quad \forall t \in \R\quad \mbox{whenever}\ x_c(t) \equiv x_d(t+\theta)
\end{equation}
The Rigid Phase Conjecture states that  condition~\eqref{E:RP_suff_cond}
is also necessary for~\eqref{E:phase_eq} to hold, with the additional hypothesis of
rigidity.

\subsection{$C^1$ Norm}
\label{S:C1norm}

We end this section by clarifying the sense in which a perturbation 
is to be considered `small', a technical point that we have hitherto slid over.
In order for hyperbolicity to imply the existence of a locally
unique perturbed periodic orbit, we use the $C^1$ topology.
It is also convenient to define this in a way that is tailored
to the network setting, with distinguished node spaces, as follows.

Choose a fixed state space $P = P_1 \times \cdots \times P_n$ where
$P_c = \R^{k_c}$ for finite $k_c$ and $1 \leq c \leq n$. Let
$C^1(P,P)$ be the Banach space of admissible $C^1$-bounded $C^1$ maps
$f : P \to P$ with the $C^1$ norm
\begin{equation}
\label{E:C1norm}
\| f\|_1 = \sup_{x \in P} (\|f(x)\|, \| \mathrm{D} f(x)\|)
\end{equation}
where $\mathrm{D}f$ is the derivative. 
In the context of this paper it is
convenient to define the norm on state space $P$ by
\begin{equation}
\label{E:C1normP}
\| (x_c)_{c \in \CC} \| = \max \{\| x_c \|_E : c \in \CC\}
\end{equation}
where $\|v\|_E$ is the Euclidean norm. 

By~Abraham \etal~\cite[Proposition 2.1.10 (ii)]{AMR83}, all norms on a
finite-dimensional real vector space are equivalent, so this
definition is equivalent to the usual $C^1$ norm.

\section{Quasi-Quotients}
\label{S:QQ}

Each induced ODE obtained in Section~\ref{S:3NE} can be
characterised as an admissible ODE for a smaller network
whose nodes correspond to the colours. If a colouring $\bowtie$ is balanced,
the smaller network is the usual quotient network. If $\bowtie$ is not balanced,
we can still construct a smaller network as a `quasi-quotient' $\GG^\RR$ for any set
of representatives $\RR$. Uniqueness now fails: different choices
of $\RR$ can give different quasi-quotients. Dynamics with synchrony
pattern $\bowtie$ projects to give dynamics on $\GG^\RR$,
but the converse fails because the discarded `constraint equations' 
need not be satisfied. For these reasons, quasi-quotients seem
not to have been considered previously.
However, they arise naturally from the methods of this paper,
and they have one very useful
property: all admissible maps for $\GG^\RR$ lift to
(that is, are induced from) admissible maps for $\GG$. 
This property allows us to construct $\GG^\RR$-admissible
perturbations using only the topology of $\GG^\RR$, and then
lifting them to admissible perturbations on $\GG$. 
We therefore develop the basic properties of quasi-quotients
required in later proofs.

\subsection{Definition of Quasi-Quotient}

Let $\GG$ be a network with nodes $\CC$, let $\bowtie$ be
a colouring of $\GG$ (which need not be balanced),
and choose a set $\RR$ of representatives for $\bowtie$. 

\begin{definition}\em               
\label{D:bracket}
If $c \in \CC$, write $[c]$ for the unique element of $\RR$ such that
$r \bowtie c$. 

In particular, $[r]= r$ if and only if $r \in \RR$.
\qed\end{definition}

\begin{definition}\em
\label{D:QQ}
The {\em quasi-quotient} network $\GG^\RR$ has nodes $r \in \RR$,
whose node type is the same as the node-type of $r$ in $\GG$.

The arrows $e$ for $\AA^\RR$ of $\GG^\RR$ are identified (via a bijection
$e \mapsto e'$) with the arrows 
\[
e' \in \bigcup_{r \in \RR} I(r)
\]
Under this identification, head and tail nodes in $\GG^\RR$ are defined by
\[
\hd^\RR(e) = \hd(e') \qquad \tl^\RR(e) = [\tl(e')]
\]
Arrows $e_1, e_2$ in $\AA^\RR$ have the same arrow type if and only if
$e_1', e_2'$ have the same arrow type in $\GG$.
\qed\end{definition}

Informally, we construct $\GG^\RR$ by taking all nodes in $\RR$,
together with their input arrows. Then the tail node of each input arrow
is found by replacing its tail node in $\GG$ by the unique node in $\RR$
that has the same colour.

For example, let $\GG$ be the 3-node network of Figure~\ref{F:3node_ring}.
The corresponding quasi-quotients for all nontrivial $(\bowtie,\RR)$
are shown in Figure~\ref{F:3node_ring_QQ}.

\begin{figure}[htb]
\centerline{
\includegraphics[width=.6\textwidth]{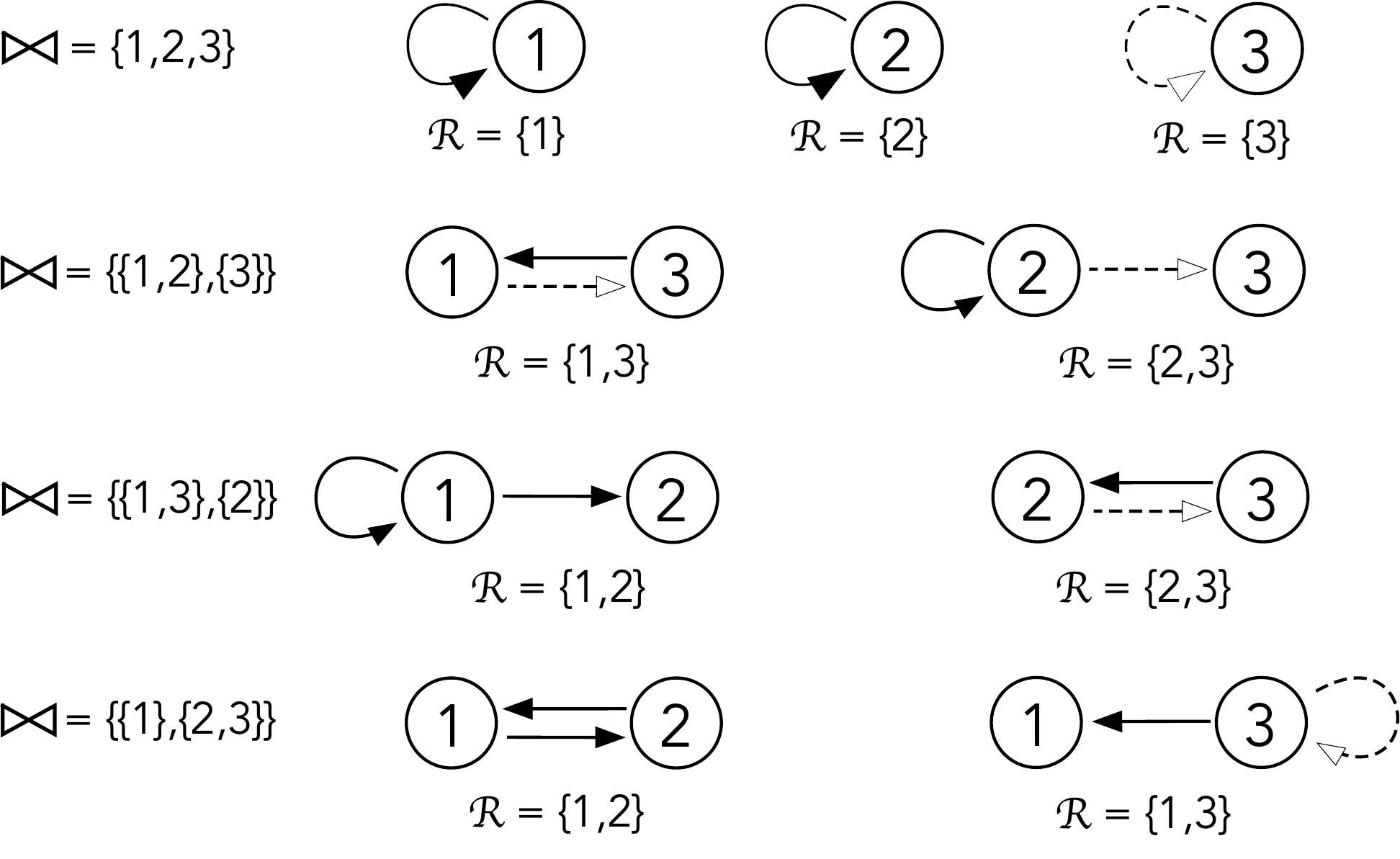}}
\caption{Quasi-quotients of the 3-node ring.}
\label{F:3node_ring_QQ}
\end{figure}

\subsection{Admissible Maps for Quasi-Quotients}

Given node spaces $P_c$ for $c \in\CC$, define the node space for
$r \in \RR$ to be $P_r$. The state space of $\GG^\RR$ is then
\[
P^\RR = \prod_{r\in\RR} P_r
\]
For any tuple $(c_1, \ldots, c_k)$ of nodes $c_i \in \CC$, define
the corresponding tuple of nodes of $\RR$ by:
\begin{equation}
\label{E:[c1...]}
[(c_1, \ldots, c_k)] = ([c_1], \ldots, [c_k])
\end{equation}

We now show that any $\GG$-admissible map on $P$ defines
a unique $\GG^\RR$-admissible map on $P^\RR$. Conversely,
every $\GG^\RR$-admissible map on $P^\RR$ lifts to
a $\GG$-admissible map on $P$, but this need not be unique.
The key observation is:

\begin{proposition}
\label{P:RRinput-type}

{\rm (a)} Any input isomorphism from $I(r)$ to $I(s)$ in $\GG^\RR$
identifies naturally with an input isomorphism from $I(r)$ to $I(s)$ in $\GG$.

{\rm (b)} Nodes $r,s \in \RR$ have the same input type in $\GG^\RR$
if and only if they have the same input type in $\GG$.
\end{proposition}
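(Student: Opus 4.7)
The plan is essentially to unpack Definition~\ref{D:QQ} and observe that the whole proposition is a formal consequence of the construction of $\GG^\RR$. The only identification to track is the bijection $e \leftrightarrow e'$ between $\AA^\RR$ and $\bigcup_{r \in \RR} I(r)$ that is built into the definition.

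First, I would verify the key local bijection: for every $r \in \RR$, the assignment $e \mapsto e'$ restricts to an arrow-type preserving bijection between the input set $I^\RR(r)$ of $r$ in $\GG^\RR$ and the input set $I(r)$ of $r$ in $\GG$. Indeed, Definition~\ref{D:QQ} gives $\hd^\RR(e) = \hd(e')$, so $\hd^\RR(e) = r$ if and only if $e' \in I(r)$, and arrow-types in $\GG^\RR$ are declared to agree with those in $\GG$ under this identification. It is worth emphasising at this point that the redefinition of tail nodes by the bracket $[\cdot]$ plays no role here: input isomorphisms see only the arrow-types of input arrows and the node-type of the head, and these are exactly the data that Definition~\ref{D:QQ} leaves untouched.

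For (a), given an input isomorphism $\beta^\RR : I^\RR(r) \to I^\RR(s)$ in $\GG^\RR$, transport it through the bijections of the previous paragraph to define $\beta : I(r) \to I(s)$ by $\beta(e') := (\beta^\RR(e))'$. Each of the three bijections in the composition preserves arrow-type, so $\beta$ does as well. The precondition that $r$ and $s$ have the same node-type in $\GG^\RR$ is by definition the same as $r \sim_C s$ in $\GG$, since the node-type of $r \in \RR$ in $\GG^\RR$ is copied from $\GG$. Hence $\beta$ is an input isomorphism in $\GG$. The reverse direction, sending an input isomorphism in $\GG$ to one in $\GG^\RR$, runs identically via the inverse bijection. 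Statement (b) is then an immediate corollary of (a): an input isomorphism $I(r) \to I(s)$ exists in $\GG^\RR$ if and only if one exists in $\GG$, so the input-equivalence relations coincide on $\RR$.

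There is no substantive obstacle: the proposition is a bookkeeping statement that $\GG^\RR$ has been designed precisely so that the input structure at every representative $r \in \RR$ is a verbatim copy of the input structure at $r$ in $\GG$. The only mild care required is to keep straight the two meanings of $I(r)$ (in $\GG^\RR$ versus in $\GG$), which I would distinguish by writing $I^\RR(r)$ for the first.
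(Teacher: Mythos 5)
Your proof is correct and follows essentially the same route as the paper's: both arguments unpack Definition~\ref{D:QQ} and use the built-in bijection $e \mapsto e'$ to transfer input sets, arrow-types, and hence input isomorphisms between $\GG^\RR$ and $\GG$. Your version is slightly more explicit (e.g.\ you separately check the node-type condition $r \sim_C s$ and introduce the notation $I^\RR(r)$ to disambiguate), but there is no substantive difference in approach.
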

\begin{proof}
(a)
This is immediate from the definition of arrows and arrow-types in
Definition~\ref{D:QQ}. In detail: we have identified arrows in $\AA^\RR$
with arrows in $\AA$ via the bijection $e \mapsto e'$. The definition of head nodes
of these arrows implies that the input arrows of $r$ in $\GG^\RR$
correspond bijectively to the input arrows of $r$ in $\GG$, preserving arrow types.
The same goes for input arrows of $s$. Therefore any input isomorphism
$I(r) \to I(s)$ in $\GG^\RR$ corresponds to an input isomorphism
$I(r) \to I(s)$ in $\GG$, and conversely.

(b) This is now immediate.
\qed\end{proof}

\subsection{Properties of Quasi-Quotients}

Most features of the usual quotient network construction do not carry over
to quasi-quotients, but a few useful ones do. We exploit these features in the proof of
the main theorem, Theorem~\ref{T:localRSC}.

\begin{theorem}
\label{T:QQadmiss}

{\rm (a) Restriction:} Every $\GG$-admissible map $f:P \to P$ determines 
a $\GG^\RR$-admissible map $f^\RR:P^\RR \to P^\RR$ defined by
\begin{equation}
\label{E:RRrestriction}
f^\RR_r(x_r, x_{T(r)}) = f_r(x_r, x_{[T(r)]})
\end{equation}

{\rm (b) Lifting:} For every $\GG^\RR$-admissible map $g:P^\RR \to P^\RR$,
there exists a $\GG$-admissible map $f:P \to P$ such that $f^\RR = g$.

{\rm (c) Smallness of Lift:} If $\| g\|_1 < \eps$, we can define $f$ so 
that $\|f\|_1 < \eps$.
\end{theorem}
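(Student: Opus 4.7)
The plan is to apply Proposition~\ref{P:char_admiss} in both directions --- to verify admissibility in (a) and to construct lifts in (b) --- using Proposition~\ref{P:RRinput-type} to transfer vertex groups and input-equivalence between $\GG$ and $\GG^\RR$.

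For (a), the defining formula $f^\RR_r(x_r, x_{T(r)}) = f_r(x_r, x_{[T(r)]})$ exhibits $f^\RR_r$ as $f_r$ composed with the bijective identification of input tuples $x_{T^\RR(r)} \leftrightarrow x_{[T^\GG(r)]}$. This identification intertwines the vertex-group actions $B^\RR(r,r)$ and $B^\GG(r,r)$ (which coincide under the arrow correspondence of Definition~\ref{D:QQ} and Proposition~\ref{P:RRinput-type}(a)), and it is compatible with standard order. Hence $B(r,r)$-invariance of $f_r$ and the equality $f_r = f_s$ for $r \sim_I s$ descend directly to $f^\RR$, so Proposition~\ref{P:char_admiss} gives $\GG^\RR$-admissibility.

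For (b), I will first choose a set $\mathcal{S}$ of representatives for $\sim_I$ on $\CC$ such that $\mathcal{S} \cap \RR$ represents $\sim_I$ on $\RR$; this is possible because, by Proposition~\ref{P:RRinput-type}(b), each $\GG^\RR$-input class is the $\RR$-part of a $\GG$-input class. For each $s \in \mathcal{S} \cap \RR$ set $f_s := g_s$, interpreting the arguments via the identifications $P_{t_i} = P_{[t_i]}$ valid because $\bowtie$ refines state-equivalence. For each $s \in \mathcal{S} \setminus \RR$ (input classes disjoint from $\RR$) set $f_s \equiv 0$. Extending to all $c \in \CC$ by $f_c := f_{s(c)}$ in standard order, where $s(c) \sim_I c$ lies in $\mathcal{S}$, produces a $\GG$-admissible map by Proposition~\ref{P:char_admiss}. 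To check $f^\RR = g$, for each $r \in \RR$ the chain $f_r = f_{s(r)} = g_{s(r)} = g_r$ combines the two standard-order equalities on $\GG$ and $\GG^\RR$ with the choice of $\mathcal{S}$.

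For (c), each $f_c$ is, as a function, either $0$ or some $g_s$, so $\|f(x)\| \leq \|g\|_\infty$ pointwise. The block-sparse structure of $\mathrm{D}f$ together with the max-norm~\eqref{E:C1normP} on $P$ yields
\[
\|\mathrm{D}f(x)\| = \max_c \|\mathrm{D}f_c(x)\|, \qquad \|\mathrm{D}g(y)\| = \max_r \|\mathrm{D}g_r(y)\|,
\]
the upper bound coming from $\|(v_c, v_{T(c)})\| \leq \|v\|_P$ and the matching lower bound from concentrating $v$ on the relevant block. Since each $\|\mathrm{D}f_c\|$ equals some $\|\mathrm{D}g_s\|$ or $0$, we obtain $\|\mathrm{D}f\| \leq \|\mathrm{D}g\|$, hence $\|f\|_1 \leq \|g\|_1 < \eps$. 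The main obstacle is the bookkeeping in (b): aligning $\mathcal{S}$ simultaneously for $\GG$ and $\GG^\RR$ and respecting standard order so that the transitional input-isomorphisms reduce to identity substitutions; the zero extension on $\mathcal{S} \setminus \RR$ is what makes (c) go through without losing control of non-$\RR$ components. Given this, (a) is essentially tautological from Proposition~\ref{P:RRinput-type}, and the norm estimate in (c) is straightforward once the max-norm is in place.
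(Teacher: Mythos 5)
Your proof is correct and follows essentially the same strategy as the paper's. Part (a) coincides with the paper's argument, just phrased through Proposition~\ref{P:char_admiss} rather than by verifying the pullback condition directly. Part (b) is in fact tidier than the paper's: the paper's display \eqref{E:liftQQ} writes $f_{[c]}$, where $[c]$ is the $\bowtie$-representative, but the admissibility verification really requires an $\sim_I$-representative; your explicit choice of $\mathcal{S}$ with $\mathcal{S}\cap\RR$ a set of representatives for $\sim_I$ on $\RR$ (justified by Proposition~\ref{P:RRinput-type}(b)), followed by extension in standard order, is what actually makes the lift well defined and admissible. Part (c) spells out the max-norm bookkeeping that the paper dispatches in one sentence; note that both you and the paper tacitly identify the lifted components $\hat f_c$ with the chosen representatives $\hat g_{s}$ of $g$, so the asserted $C^1$ bound is really a bound on those representatives rather than only on $g$ as a map on $P^\RR$. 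When $T^\RR(s)$ has repeated tail nodes, $g_s$ only samples $\hat g_s$ along a diagonal, so this distinction is not vacuous; it is harmless for the paper's purposes because the perturbations built in Lemma~\ref{L:p_exists} are explicit compactly supported bump functions, but it is worth flagging if one wants the theorem to stand alone.
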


\begin{proof}

(a) 
By Definition~\ref{D:QQ}, the domain condition for $f^\RR$ is 
that there exists $\hat f^\RR: P_r \times P_{[T(r)]}$ such that
$f^\RR(x) = \hat f^\RR(x_r, x_{[T(r)]})$. This is consistent with~\eqref{E:RRrestriction}.

The identifications
in Definition~\ref{D:QQ} imply that if $r,s \in \RR$ and $\beta \in B(r,s)$ (for $\GG$)
then $\beta$ identifies with an input isomorphism in $\GG^\RR$,
which we also denote by $\beta$. The pullback condition for $f$ is
\[
f_d(x_d, x_{T(d)}) = f_c(x_d, \beta^\ast x_{T(d)})
\]
for all input isomorphisms $\beta:I(c) \to I(d)$. Therefore
\[
f^\RR_s(x_s, x_{[T(s]}) = f_s(x_s, x_{T(s)}) = f_r(x_r, \beta^*x_{T(r)})
= f^\RR_r(x_s, \beta^*x_{[T(s]})
\]
which is the pullback condition for $f^\RR$.

(b) Here it is convenient to use the alternative characterisation
of admissible maps in Proposition~\ref{P:char_admiss}.
Let $g:P^\RR \to P^\RR$ be a $\GG^\RR$-admissible map. We must
construct a $\GG$-admissible map $f:P \to P$ such that $f^\RR = g$.
The nodes $\CC$ split into two disjoint subsets $\CC_1, \CC_2$
defined by
\[
\CC_1 = \{c \in \CC: \exists\, r \in \RR, c \sim_I r\}
\qquad \CC_2 = \CC\setminus\CC_1
\]
With arrows in standard order, define $f$ by:

\begin{equation}
\label{E:liftQQ}
f_c(x_c,x_{T(c)}) = \left\{\begin{array}{lcl}
f_{[c]}(x_{[c]},x_{[T(c)]})& \mbox{if} & c \in \CC_1 \\
0 & \mbox{if} & c \in \CC_2
\end{array}\right.
\end{equation}
Clearly $f$ satisfies the required domain conditions to be $\GG$-admissible.
To verify the pullback conditions, we must show that
$f_c$ is $B(c,c)$-invariant on $P_{T(c)}$. The rest follows because
transition maps are now the identity.
Since $c \in \CC_1$, invariance under $B(c,c)$ follows from Proposition~\ref{P:RRinput-type}
and $B([c],[c])$-invariance of $f_{[c]}$ on $P_{[T(c)]}$.

(c) In~\eqref{E:liftQQ}, we have  $f_{[c]}(x_{[c]},x_{[T(c)]}) = 0$ when $c \in \CC_2$.
With the $C^1$ norm defined as in Definition~\ref{E:C1normP},
 $\| g\|_1 < \eps$ implies that $\|f\|_1 < \eps$.
\qed\end{proof}

\begin{remark}\em
For (b), the choice of $f_c$ for $c \in \CC_2$ can be replaced by the corresponding
components $g_c$ of any $\GG$-admissible map $g$, by Proposition~\ref{P:char_admiss}. 
The choice on $\CC_1$ is unique.

In the proof of  the key Lemma~\ref{L:p_exists} below, we 
make the lift have small compact support, which implies that it is $C^1$-bounded,
but we do not want it to vanish identically for $c \in \CC_2$. This can be done
by making $g$ have small compact support but not requiring $g \equiv 0$.
\qed\end{remark}

\subsection{Induced ODE}
\label{S:IODE}

Associated with any quasi-quotient is a version of the 
usual restricted ODE for a quotient network. Because $\Delta_{\bowtie}$
need not be flow-invariant, the domain of the ODE is restricted to
$P^\RR$ and its codomain is projected onto $P^\RR$:

\begin{definition}\em
\label{D:ind+con}
Consider an admissible ODE~\eqref{E:admiss_ODE} and write it in components as
\begin{equation}
\label{E:admiss_ODE_comp}
\dot{x}_c = f_c(x_c, x_{T(c)}) \quad x \in P,\ 1 \leq c \leq n
\end{equation}
The {\em induced ODE} for the pair $(\bowtie, \RR)$ is the ODE
\begin{equation}
\label{E:admiss_ODE_induced}
\dot{x}_r = f_r(x_r,x_{[T(r)]}) \quad x \in P,\ r \in \RR
\end{equation}
where $[T(r)]$ is defined by~\eqref{E:[c1...]}.
The {\em constraint equations} are the corresponding equations on the other components,
implied by the synchrony relations $x_c(t) \equiv x_{[c]}(t)$:
\begin{equation}
\label{E:Delta_constraints}
\dot{x}_{[c]} = f_c(x_{[c]},x_{[T(c)]}) \quad x \in P, c \in \CC\setminus \RR
\end{equation}
\qed\end{definition}

\begin{example}\em
\label{ex:3nodeQQ_induced}

Again consider the 3-node network of Figure~\ref{F:3node_ring},
with admissible ODEs \eqref{e:3nodeODE}. 
Consider Case (B) of Section~\ref{S:CSP} with colouring
$\bowtie = \{\{1,2\},\{3\}\}$. There are two choices of $\RR$.

If $\RR = \{1,3\}$, the induced ODE is 
\beqn
\dot{x}_1 &=& f(x_1, x_3) \\
\dot{x}_3 &=& g(x_3, x_1)  
\eeqn
with constraint
\[
\dot{x}_1 = f(x_1, x_1)  
\]

If $\RR = \{2,3\}$, the induced ODE is 
\beqn
\label{E:3nodeODEind_1}
\dot{x}_2 &=& f(x_2, x_2)  \\
\dot{x}_3 &=& g(x_3, x_2)  
\eeqn
with constraint
\[
\dot{x}_2 = f(x_2, x_3) 
\]

The induced ODEs are the admissible ODEs for the
corresponding quasi-quotients in Figure~\ref{F:3node_ring_QQ},
in accordance with Theorem~\ref{T:QQadmiss}.
The same remark holds for all other choices of $(\bowtie,\RR)$
in the figure.
\qed\end{example}

The induced ODE depends on the choice of $\bowtie$ and $\RR$,
and in general solutions need not lift back to~\eqref{E:admiss_ODE_comp}.
More precisely:

\begin{theorem}
\label{T:induced_ODE}
\begin{itemize}
\item[\rm (a)]
Any orbit $\X = \{x(t)\}$ of \eqref{E:admiss_ODE_comp} defines
an orbit $\X^\RR = \{x^\RR(t)\}$ of \eqref{E:admiss_ODE_induced}.
\item[\rm (b)]
The orbit $x(t) \in \Delta_{\bowtie}$ if and only if it satisfies the
additional constraints \eqref{E:Delta_constraints}.
\item[\rm (c)]
Solutions of \eqref{E:admiss_ODE_induced} 
lift uniquely to a solution of \eqref{E:admiss_ODE_comp},
provided that such a solution satisfies the constraints.\qed
\end{itemize}
\end{theorem}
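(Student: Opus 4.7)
The plan is to unpack each of (a), (b), (c) by direct substitution, using the defining identity $x_c(t) = x_{[c]}(t)$ that characterises $\Delta_{\bowtie}$, together with the form of the induced ODE and constraint equations in Definition~\ref{D:ind+con}. No deep ingredients are needed; the work is essentially in bookkeeping the indices of $\RR$ versus $\CC\setminus\RR$ and the tuple identifications $x_{T(c)} = x_{[T(c)]}$ which are valid on $\Delta_{\bowtie}$.

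For (a), I would take an orbit $x(t)$ of \eqref{E:admiss_ODE_comp} lying in $\Delta_{\bowtie}$ and define $x^{\RR}(t) := (x_r(t))_{r \in \RR} \in P^{\RR}$. The synchrony relations $x_c(t) \equiv x_{[c]}(t)$ imply $x_{T(r)}(t) = x_{[T(r)]}(t)$ as tuples in $P_{T(r)}$, so the $r$-component of the full ODE becomes
\[
\dot x_r(t) \;=\; f_r(x_r(t), x_{T(r)}(t)) \;=\; f_r(x_r(t), x_{[T(r)]}(t)),
\]
which is precisely \eqref{E:admiss_ODE_induced}. For the forward direction of (b), I would apply the identical substitution in the $c$-components with $c \in \CC \setminus \RR$: since $\dot x_{[c]} = \dot x_c = f_c(x_c, x_{T(c)}) = f_c(x_{[c]}, x_{[T(c)]})$, the result is exactly the constraint \eqref{E:Delta_constraints}.

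For the converse of (b) and for (c), I would argue via lifting. Given a solution $y(t) \in P^{\RR}$ of \eqref{E:admiss_ODE_induced}, define the lift $x(t) \in P$ by $x_c(t) := y_{[c]}(t)$; this lies in $\Delta_{\bowtie}$ by construction. The $r$-components of \eqref{E:admiss_ODE_comp} for $r \in \RR$ reduce to \eqref{E:admiss_ODE_induced} and are automatically satisfied, while the remaining $c$-components with $c \notin \RR$ produce precisely the constraint equations \eqref{E:Delta_constraints}. Hence the lift $x(t)$ solves \eqref{E:admiss_ODE_comp} if and only if the constraints hold, which gives both (c) and the reverse direction of (b). Uniqueness of the lift is immediate: any orbit of \eqref{E:admiss_ODE_comp} lying in $\Delta_{\bowtie}$ is determined by its $\RR$-components via $x_c = x_{[c]}$.

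The only delicate point I anticipate is the reverse implication of (b): one must be careful that the constraint equations, being statements about derivatives along a prescribed orbit rather than about pointwise equality of states, yield $x \in \Delta_{\bowtie}$ only because the lifting construction produces the unique orbit with those derivative data and synchronous initial values. I would therefore phrase (b) so that both directions are read off algebraically from the lift of (c), treating $\Delta_{\bowtie}$-membership as the structural statement ``$x_c(t) = x_{[c]}(t)$ for all $c, t$'' rather than as an initial-value condition.
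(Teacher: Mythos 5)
Your proposal is correct and simply spells out what the paper dismisses with ``The proof is obvious.'' The substitutions you make are exactly the intended ones: on $\Delta_{\bowtie}$, the tuple identity $x_{T(c)}(t) = x_{[T(c)]}(t)$ turns each component of the full ODE \eqref{E:admiss_ODE_comp} into the corresponding component of the induced ODE \eqref{E:admiss_ODE_induced} (for $r\in\RR$) or into the constraint \eqref{E:Delta_constraints} (for $c\notin\RR$), and the lift $x_c := y_{[c]}$ reverses this.

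Your concluding caution about the reverse implication of (b) is well placed and worth flagging. Read literally, the constraint equations \eqref{E:Delta_constraints} involve only the $\RR$-components $x_{[c]}, x_{[T(c)]}$, so they impose no direct condition on the components $x_c$ with $c\notin\RR$; an arbitrary orbit of \eqref{E:admiss_ODE_comp} could conceivably satisfy the constraints on its $\RR$-part while failing $x_c = x_{[c]}$. Your resolution --- reading (a)--(b) as statements about orbits that lie in $\Delta_{\bowtie}$, and deriving the converse of (b) as a corollary of the lift construction in (c), where $\Delta_{\bowtie}$-membership is built in structurally by defining $x_c := y_{[c]}$ --- is exactly the reading that makes the theorem both true and useful in the remainder of the paper (for instance in Lemma~\ref{L:canonical_ppo}, where $\X$ is assumed to have synchrony pattern $\bowtie$ from the outset). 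No gap; your bookkeeping is correct.
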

The proof is obvious.
We emphasise that (c) requires the constraint equations to
be satisified as well as the induced ODE.
This can, for example, be implied by rigidity. 
Our aim in this paper is to obtain a contradiction
to this property in suitable circumstances.

When $\bowtie$ is not balanced, then for any choice of $\RR$ the
constraints include at least one component that differs formally
from the corresponding component of the induced equation.
We exploit this formal difference to obtain a contradiction to rigidity.

When $\bowtie$ is balanced, the constraints just repeat the corresponding
components of the induced ODE , and this is the same as the usual restricted ODE.
In this case, no contradiction occurs.

\subsection{Perturbations}

Suppose that an ODE $\dot y = g(y)$
has a non-hyperbolic periodic orbit $\Y$. If $g$ is perturbed to 
a nearby map $\tilde g$, there may be no periodic orbits near $\Y$,
or more than one. Thus we cannot talk of `the' perturbed
periodic orbit $\tilde \Y$.

If $f$ is $\GG$-admissible, with a hyperbolic periodic orbit $\X$,
and we consider the induced ODE $\dot y = g(y)$ for $\GG^\RR$,
the same remark applies to $\Y = \X^\RR$, because $\Y$
need not be hyperbolic. However,

\begin{lemma}
\label{L:canonical_ppo}
If $\X$ is hyperbolic with rigid synchrony pattern
$\bowtie$, and $\RR$ is a set of representatives, then for any
small $\GG^\RR$-admissible perturbation $\tilde f^\RR$ = of
$f^\RR$ the induced orbit $\X^\RR$ has a
uniquely defined canonical perturbed periodic orbit $\tilde \X^\RR$.  
\end{lemma}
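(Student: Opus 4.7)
The plan is to obtain $\tilde \X^\RR$ via the chain: lift the perturbation from $\GG^\RR$ back to $\GG$, invoke hyperbolicity of $\X$ in $P$ to fix a unique perturbed periodic orbit, then project back using the induced ODE. Rigidity does the essential work of keeping the perturbed orbit inside $\Delta_{\bowtie}$ so that the projection makes sense.

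First I would apply Theorem~\ref{T:QQadmiss}(b,c) to the increment $\tilde f^\RR - f^\RR$, producing an explicit $\GG$-admissible lift $p$ with $p^\RR = \tilde f^\RR - f^\RR$ and $\|p\|_1$ as small as desired. Set $\tilde f = f + p$; by linearity of the restriction map $\cdot^\RR$ this has quasi-quotient $\tilde f^\RR$. Since $\X$ is hyperbolic for $f$, the standard persistence theorem for hyperbolic periodic orbits (Hirsch~\etal~\cite{HPS77}) produces, for $\|p\|_1$ sufficiently small, a locally unique perturbed periodic orbit $\tilde \X$ of $\tilde f$ near $\X$, with period close to $T$. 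The hypothesis that $\bowtie$ is rigid for $\X$ then forces $\tilde \X$ to inherit the synchrony pattern $\bowtie$, so $\tilde \X \subset \Delta_{\bowtie}$. Theorem~\ref{T:induced_ODE}(a) projects $\tilde \X$ to a periodic orbit $\tilde \X^\RR$ of the induced ODE $\dot y = \tilde f^\RR(y)$ in $P^\RR$, which is the required canonical perturbed orbit.

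The main obstacle I expect is canonicity: the lift in Theorem~\ref{T:QQadmiss}(b) is not unique, because the Remark following that theorem explicitly allows the components on $\CC_2$ to be chosen freely. A priori, different admissible lifts of $\tilde f^\RR$ might produce different perturbed orbits in $P$ and hence different projections. I would address this by a convexity/continuity argument: given two lifts $\tilde f_0$ and $\tilde f_1$, the interpolation $\tilde f_s = (1-s)\tilde f_0 + s\tilde f_1$, for $s \in [0,1]$, is again a $\GG$-admissible lift of $\tilde f^\RR$, since admissible maps form a vector space and $\cdot^\RR$ is linear. For each $s$, hyperbolicity produces a unique perturbed orbit $\tilde \X_s$ depending continuously on $s$, rigidity places $\tilde \X_s$ inside $\Delta_{\bowtie}$, and by the restriction formula~\eqref{E:RRrestriction} every projection $\tilde \X_s^\RR$ satisfies the \emph{same} induced ODE $\dot y = \tilde f^\RR(y)$.

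The core point, which requires care because $\X^\RR$ itself need not be hyperbolic in $P^\RR$ and so the induced ODE could a priori carry a continuous family of nearby periodic orbits, is that $\tilde \X_s$ in $P$ is pinned down uniquely by hyperbolicity of $\X$ for each $\tilde f_s$, and the correspondence $\tilde \X_s \leftrightarrow \tilde \X_s^\RR$ is a continuous bijection because $\Delta_{\bowtie}$ is parameterised by its $\RR$-coordinates. Combined with continuity in $s$, this forces the path $s \mapsto \tilde \X_s^\RR$ to be constant, yielding $\tilde \X_0^\RR = \tilde \X_1^\RR$ and hence canonicity. The hardest step in making this rigorous is showing that the constant-in-$s$ conclusion really follows from the one-sided uniqueness upstairs; an alternative would be to fix once and for all the explicit lift from the proof of Theorem~\ref{T:QQadmiss}(b) (with $p_c = 0$ on $\CC_2$), which bypasses the independence-of-lift question but makes the word ``canonical'' refer to that specific choice.
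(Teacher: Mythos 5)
Your first paragraph is exactly the paper's proof: lift the increment via Theorem~\ref{T:QQadmiss}, take the locally unique perturbed orbit $\tilde\X$ guaranteed by hyperbolicity of $\X$ in $P$, and use rigidity to place $\tilde\X$ in $\Delta_{\bowtie}$ so that it projects to a periodic orbit of the induced ODE; the paper stops there and simply declares that ``this procedure defines $\tilde\X^\RR$ uniquely,'' which is your fallback reading of ``canonical'' (a procedurally fixed choice of lift), not a claim of lift-independence. Your interpolation argument for genuine lift-independence does not close, for precisely the reason you flag: all the $\tilde\X_s^\RR$ solve the same induced ODE, but absent strong hyperbolicity that ODE may carry a continuum of periodic orbits through which a continuous path can move, so constancy in $s$ does not follow; since the later applications always work with one explicitly constructed lift, nothing more than the fallback is needed.
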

\begin{proof}
Let $p^\RR = \tilde f^\RR-f^\RR$ where $\|p^\RR\|_1$ is small. By (c) we can lift
$p^\RR$ to a $\GG$-admissible map $p$ whose norm is equally small.
If $\tilde f = f + p$ then $\tilde f^\RR= f^\RR + p^\RR = \tilde f^\RR$.
Let $\tilde \X$ be the unique perturbed periodic orbit near $\X$
for $\GG$. By rigidity, $\tilde\X^\RR$ is a periodic orbit of $f^\RR$,
and is near $\X^\RR$. This procedure defines $\tilde\X^\RR$
uniquely.
\qed\end{proof}

\begin{corollary}
\label{C:properties}
If all periodic orbits of $\tilde f^\RR$ near $\X^\RR$ are hyperbolic,
then $\tilde \X^\RR$ is hyperbolic.
\qed\end{corollary}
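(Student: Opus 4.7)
The plan is to observe that this is essentially an immediate consequence of Lemma~\ref{L:canonical_ppo} together with the hypothesis, so the proof is mostly a matter of chaining the two together carefully.

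First I would invoke Lemma~\ref{L:canonical_ppo} to guarantee that, for a sufficiently small $\GG^\RR$-admissible perturbation $\tilde f^\RR$, there is a well-defined canonical perturbed periodic orbit $\tilde \X^\RR$ of $\tilde f^\RR$, obtained by lifting $\tilde f^\RR - f^\RR$ to a $\GG$-admissible perturbation $p$ of equally small $C^1$ norm (using Theorem~\ref{T:QQadmiss}(c)), applying the Implicit-Function-Theorem-type persistence of hyperbolic periodic orbits to $\tilde f = f + p$ to produce $\tilde\X$ near $\X$, and then projecting via $x \mapsto x^\RR$ using rigidity of the synchrony pattern $\bowtie$.

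Next I would confirm that $\tilde\X^\RR$ lies in the neighbourhood of $\X^\RR$ to which the hypothesis of the corollary applies. This is built into the construction: because $\tilde\X \to \X$ in $C^1$ as $\|p\|_1 \to 0$ and the projection $P \to P^\RR$ is continuous (indeed, a coordinate projection), $\tilde\X^\RR$ is as close to $\X^\RR$ as we wish by taking the perturbation of $f^\RR$ small enough. By hypothesis, all periodic orbits of $\tilde f^\RR$ in such a neighbourhood of $\X^\RR$ are hyperbolic; since $\tilde\X^\RR$ is a periodic orbit of $\tilde f^\RR$ in that neighbourhood, it is hyperbolic.

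I do not expect any real obstacle here: the substantive work has already been done in Lemma~\ref{L:canonical_ppo} and Theorem~\ref{T:QQadmiss}. The only minor point worth stating explicitly in the write-up is the quantification of "near" --- we need the neighbourhood in which the hypothesis asserts hyperbolicity to be large enough to contain $\tilde\X^\RR$ for all sufficiently small perturbations, which is clear from continuity of the canonical perturbation map $\tilde f^\RR \mapsto \tilde\X^\RR$ at $f^\RR$.
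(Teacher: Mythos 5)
Your argument is correct and is essentially the one the paper intends: the corollary is stated without proof precisely because, once Lemma~\ref{L:canonical_ppo} produces the canonical periodic orbit $\tilde\X^\RR$ of $\tilde f^\RR$ lying near $\X^\RR$, the hypothesis applies to it directly. Your write-up just makes explicit the continuity/quantification of ``near'' that the paper leaves implicit.
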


\section{Properties Related to Hyperbolicity}
\label{S:PRH}

In this section we recall background results and concepts that are needed
for the proofs of the main theorems, and provide rigorous definitions
for concepts that until now have been treated informally for illustrative purposes.

\subsection{$C^1$-Bounded Maps}
\label{S:C1BM}

In order for locally unique perturbed periodic orbits to exist,
we must work with perturbations $p$ for which the $C^1$-norm is bounded:
\[
\| p\|_1 = \sup_{x\in P}\max(\|p(x)\|, \|\mathrm{D} p(x)\|) < \infty
\]
where $\mathrm{D}$ is the derivative. These maps form a Banach space.

This condition can always be arranged using a bump function, 
Abraham \etal~\cite[Lemma 4.2.13]{AMR83}, to modify any admissible map $f$ 
so that it vanishes outside some large compact set $K$ that contains
$\X$, while leaving $\X$ unchanged and $f$ unchanged  
in a neighbourhood of $\X$. However, we require
$C^1$-boundedness only for perturbations $p$ of $f$, not for $f$ itself,
and $p$ will always be defined in a manner that ensures it is bounded,
so this modification of $f$ is not required in this paper.

\subsection{Hyperbolic Periodic Orbits}
\label{S:HPO}

Hyperbolicity (for equilibria, periodic orbits, or more generally for invariant submanifolds)
is defined in many sources, for example
Abraham and Mardsen~\cite{AM78}, Arrowsmith and Place~\cite{AP90}, 
Hirsch and Smale~\cite{HS74}, Guckenheimer and Holmes~\cite{GH83},
and Katok and Hasselblatt~\cite{KH95}. An equilibrium $x^0$ of~\eqref{e:netwkODE}
is hyperbolic if and only if the derivative (Jacobian) $\mathrm{D}f|_{x^0}$
has no eigenvalues on the imaginary axis (zero included). A periodic orbit
is hyperbolic if its linearised Poincar\'e return map, for some (hence any) Poincar\'e section,
has no eigenvalues on the unit circle. That is, exactly one Floquet multiplier (equal to $1$)
lies on the unit circle; equivalently, exactly one Floquet exponent (equal to $0$)
lies on the imaginary axis~\cite{HKW81}.

The following result is standard, and can be proved by applying the 
Implicit Function Theorem to a Poincar\'e map. A more general proof for invariant
submanifolds can be found in Hirsch \etal~\cite[Theorem 4.1(f)]{HPS77}.
For the purposes of this paper it is convenient to
state it for 1-parameter families of perturbations.

\begin{lemma}
\label{L:hyperbolic}
Let $\X = \{x(t)\}$ be a hyperbolic periodic orbit of a smooth ODE 
$\dot{x} = f(x)$ on $\R^n$. Let $f+\eps p$ be any 1-parameter family of perturbations,
with $\|p\|_1$ bounded.
Then for $\eps \ll 1$ there exists, near $x(t)$, a locally unique
periodic orbit $\X^\eps = \{x^\eps(t)\}$ of the perturbed ODE $\dot{x} = f(x)+\eps p(x)$.
\qed \end{lemma}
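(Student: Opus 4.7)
The plan is to reduce the problem to finding a fixed point of a Poincar\'e return map and apply the Implicit Function Theorem, exploiting the invertibility that hyperbolicity guarantees.

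First I would fix a point $x_0 = x(0)$ on $\X$ and choose a codimension-one smooth local section $\Sigma$ through $x_0$ that is transverse to $f(x_0)$, i.e.\ $f(x_0)\notin T_{x_0}\Sigma$. Since $\|p\|_1$ is bounded and $f$ is smooth, the flow of $\dot x = f(x)+\eps p(x)$ depends smoothly on both the initial condition and on $\eps$ for bounded time; see for instance the smooth dependence theorems in Abraham and Marsden~\cite{AM78}. For $\eps$ sufficiently small and initial conditions sufficiently close to $x_0$, trajectories of the perturbed system return to $\Sigma$ in time close to $T$, giving a smooth Poincar\'e return map
\[
P:\Sigma \times (-\eps_0,\eps_0)\longrightarrow \Sigma,\qquad P(x_0,0)=x_0.
\]
Periodic orbits of the perturbed ODE near $\X$ correspond bijectively to fixed points of $y\mapsto P(y,\eps)$ near $x_0$.

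Next I would set
\[
F(y,\eps) \;=\; P(y,\eps) - y,\qquad F(x_0,0)=0,
\]
and examine $\partial_y F(x_0,0) = \mathrm{D}_y P(x_0,0) - I_{T_{x_0}\Sigma}$. By hyperbolicity of $\X$, the linearised Poincar\'e map $\mathrm{D}_y P(x_0,0)$ has no eigenvalue on the unit circle, so in particular $1$ is not an eigenvalue, and $\partial_y F(x_0,0)$ is invertible. The Implicit Function Theorem (Lang~\cite{}, or the version in Abraham--Marsden--Ratiu) then provides $\eps_1\in(0,\eps_0)$, a neighbourhood $V\subseteq\Sigma$ of $x_0$, and a smooth curve $\eps\mapsto x^\eps_0\in V$ with $x^0_0=x_0$ and $F(x^\eps_0,\eps)=0$, and moreover $x^\eps_0$ is the unique zero of $F(\cdot,\eps)$ in $V$.

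Finally I would define $\X^\eps$ to be the trajectory of $\dot x = f(x)+\eps p(x)$ through $x^\eps_0$; this is periodic with period $T_\eps$ close to $T$, and depends continuously on $\eps$. Local uniqueness follows because any perturbed periodic orbit sufficiently close to $\X$ must intersect $\Sigma$ in $V$, and hence at $x^\eps_0$. The only technical point to watch is that ``sufficiently close'' and ``$\eps \ll 1$'' must be chosen so that the perturbed flow still carries $V$ across $\Sigma$ in time roughly $T$; this is where the $C^1$-boundedness of $p$ is used, to ensure the flow is defined on an interval of length approximately $T$ uniformly in $y$ and $\eps$. No genuine obstacle arises; the argument is a textbook application of the Implicit Function Theorem, and is the reason hyperbolicity (rather than any stronger dynamical property) suffices for persistence.
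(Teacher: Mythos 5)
Your argument is correct and coincides with the approach the paper itself indicates: the paper states the result is standard, ``can be proved by applying the Implicit Function Theorem to a Poincar\'e map,'' and otherwise defers to Hirsch--Pugh--Shub. You have simply filled in that sketch — Poincar\'e section, return map $P(y,\eps)$, invertibility of $\mathrm{D}_y P - I$ from the no-unit-eigenvalue condition, then the Implicit Function Theorem — and your remark on why $C^1$-boundedness of $p$ is needed (uniform existence of the flow over a time interval of length $\approx T$) is exactly the right technical point.
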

The perturbed periodic orbit is locally unique in the sense
that, for any 1-parameter family of sufficiently small perturbations,
there is a locally unique path of periodic orbits that includes the unperturbed one. 
Here and elsewhere, $\eps \ll 1$ means $\eps < \eps^*$ for some $\eps^* > 0$ with specified properties.

\subsection{Open Properties}

Again, 
choose a fixed state space $P = P_1 \times \cdots \times P_n$, and let
$C^1(P,P)$ be the Banach space of admissible $C^1$-bounded $C^1$ maps
$f : P \to P$ with the $C^1$ norm~\eqref{E:C1norm}.
Let $C^\infty(P,P)$ be the space of admissible $C^\infty$ maps
$f : P \to P$, which is not a Banach space. We use $\|\ \|_1$ to put a
topology on the space
\[
\FF(P) = C^1(P,P) \cap C^\infty(P,P)
\]
This topology is applied only to `small perturbations' $p$ of $C^\infty$ maps $f$,
because smooth maps need not be $C^1$-bounded. 
We use the same notation when all maps are required to be $\GG$-admissible for
a network $\GG$, indicating this condition by context.

If we focus only on
a suitable compact subset $K$ of state space, we can replace any admissible map
$f$ by a bounded one that agrees with $f$ on $K$, see
see Section~\ref{S:C1BM}. However, the space of admissible $C^1$-bounded
$C^\infty$ maps $f : P \to P$ is still not a Banach space. Nevertheless,
we can state:

\begin{definition}\em
\label{D:open}
A property $\OO$ of maps $f \in C^\infty(P,P)$ is {\em open} 
if whenever $f$ has property $\OO$,
there exists $\eps > 0$ such that, for all $q \in \FF(P)$ with $\|q\|_1 < \eps$,
the map $f+q$ has property $\OO$. Equivalently, the set of all $f$ with property
$\OO$ is open in the $C^1$ norm, and is preserved by all
$C^1$-small perturbations of $f$.

We use the same terminology for network dynamics,
requiring the maps involved to be admissible.
\qed\end{definition}

In the sequel we use a $1$-parameter family of perturbations 
$q = \eps p$ for a fixed $p \in \FF(P)$, that is, we consider
 the family $f+\eps p$ for $\eps \ll 1$.
 We use the weaker condition
that $\OO$ holds for all $p$ and all $\eps \ll 1$. That is, we do not
require the upper bound on $\eps$ to be uniform in $p$.

\subsection{Rigidity}

We extend Definition~\ref{D:open} to properties of a hyperbolic periodic orbit 
of an admissible ODE, replacing `open' by `rigid' to preserve traditional terminology. 
Restating~\eqref{e:netwkODE} for convenience, let the admissible ODE be
\begin{equation}
\label{E:admiss_ODE}
\dot{x} = f(x) \quad x \in P
\end{equation}
and let $\X = \{x(t) : t \in \R\}$ be a hyperbolic periodic orbit
with period $T$. By Lemma~\ref{L:hyperbolic}, 
if $p \in \FF(P)$ is any admissible map and
$\eps$ is sufficiently small then the perturbed ODE
\begin{equation}
\label{E:admiss_ODE_pert}
\dot{x^\eps} = f(x^\eps) + \eps p(x^\eps) \quad x^\eps\in P
\end{equation}
has a {\em unique} perturbed periodic orbit $\X^\eps = \{x^\eps(t) : t \in \R\}$
that is near $\X$ in the Hausdorff metric for the $C^1$ topology,
with period $T^\eps$ near $T$. This equation becomes~\eqref{E:admiss_ODE}
when $\eps = 0$. In particular, $\X^0 = \X$ and $T^0 = T$.

As is customary, we use the same symbol $x^\eps$ to
denote an arbitrary variable in $P$ and a specific solution (orbit, trajectory) of the ODE.
The alternative is to introduce cumbersome notation to distinguish the two meanings.

Using a fixed Poincar\'e section $\Sigma$ to $\X$ to define the initial condition
by $x^\eps(t_0) \in \Sigma$, and considering the Poincar\'e map and hyperbolicity,
we can assume that for $\eps \geq 0$ and any $t\in\R$ the point $x^\eps(t)$ varies smoothly with
$\eps$, and so does $T^\eps$.

\begin{definition}\em
\label{D:rigid_property}
A property $\OO$ of $\X$ relative to $T$ is {\em rigid} if $\X^\eps$
has property $\OO$ relative to $T^\eps$ for $\eps \ll 1$.
\qed\end{definition}

Hyperbolicity of a given $\X$ for $f$ is an open property of $f$,
and also a rigid property of $\X$. Rigid synchrony and phase patterns
of $\X$ are obviously rigid properties of $\X$. So are local
rigid synchrony and phase patterns, defined in Section~\ref{S:intro}.

If $\X$ has a balanced synchrony pattern (or local synchrony pattern)
$\bowtie$, the corresponding periodic orbit on the quotient $\GG/\!\!\bowtie$
is also hyperbolic, because $\Delta_{\bowtie}$ is flow-invariant.
However, this result no longer applies if $\bowtie$ is not balanced.
This is why we require $\X$ to be stably isolated or strongly hyperbolic,
Sections~\ref{S:SI} and \ref{S:SH}.

\subsection{Failure of Hyperbolicity}
\label{S:FH}

There is one class of networks for which we do not expect an analogue of the Kupka-Smale
Theorem to hold, for feedforward reasons. To discuss it, we use
standard ideas from Floquet theory~\cite{HKW81}.

Every network decomposes into 
{\em transitive components}~\cite{GGPSW19, wiki_strongly}, and the set $\TT$ of transitive components has a
natural partial ordering induced by directed paths. Dynamically, this ordering
gives admissible ODEs a feedforward structure. If $\TT$ has more than one maximal
element, and the periodic state $\X$ oscillates on at least two maximal components,
this state is not hyperbolic. This follows because the Floquet operator has block-triangular
form induced by the partial ordering. 

\begin{figure}[htb]
\centerline{
\includegraphics[width=0.15\textwidth]{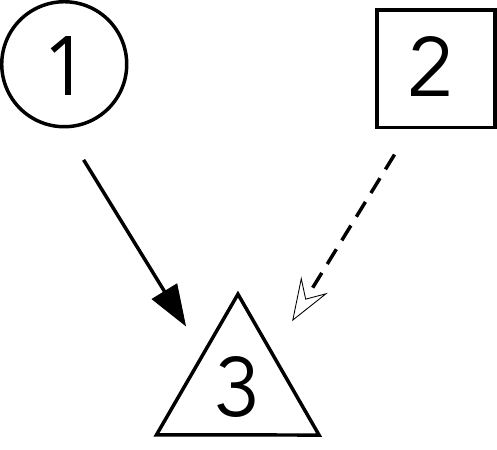}}
\caption{Connected network with two distinct maximal transitive components.}
\label{F:2max_tc}
\end{figure}

In more detail, Jos\'ic and T\"or\"ok~\cite[Remark 1]{JT06} observe that 
for the network of Figure~\ref{F:2max_tc},  periodic orbits cannot be hyperbolic 
unless node 1 or node 2 is steady. 
To prove this, observe that  admissible ODEs
for this network have the form
\begin{equation}
\label{E:no_hyp}
\begin{array}{rcl}
\dot x_1 &=& f(x_1) \\
\dot x_2 &=& g(x_2) \\
\dot x_3 &=& h(x_1,x_2,x_3)
\end{array}
\end{equation}
We must show that if such an ODE has a hyperbolic periodic orbit 
\[
\X = \{(x_1(t),x_2(t),x_3(t)): t \in \R\}
\]
then either $x_1(t)$ or $x_2(t)$ is steady. This follows because,
setting $F = (f,g,h)$, 
the Floquet multipliers of are the eigenvalues of
$\mathrm{D}_{x(t)}F$, which is lower triangular. Therefore the evolution operator 
is also lower triangular, and has two eigenvalues equal to 1. 
These eigenvalues correspond to the two diagonal blocks describing 
the evolution in the spaces of the variables $x_1$ and $x_2$.
(A periodic orbit always has a Floquet multiplier 1 for 
an eigenvector along the orbit: see~\cite[Chapter 1 Note 5]{HKW81}.)

As Jos\'ic and T\"or\"ok observe, 
similar remarks apply if we replace nodes 1 and 2 by two disjoint 
transitive components that are maximal in the partial ordering:
that is, they force (feed forward into) 
the rest of the network, which replaces node $3$. In~\eqref{E:no_hyp} let $x_1, x_2, x_3$ be
coordinates on, respectively, the two maximal components and 
the rest of the network. The same argument then applies.

\subsection{Implications for Quasi-Quotients}
\label{S:IQQ}

In the present context, 
such networks cannot occur as the overall network $\GG$, since we
assume $\X$ hyperbolic. However, we also require hyperbolicity
(or a similar property) for certain quasi-quotients $\GG^\RR$.
A `bad' choice of representatives
can create more than one maximal transitive component in $\GG^\RR$.
Figure~\ref{F:2col_reg} (left) shows the simplest (connected) example of this kind.
The `good' choice $\RR = \{1,3\}$ yields an induced network
with feedforward structure and a single maximal component,
Figure~\ref{F:2col_reg} (middle).
In contrast, the `bad' choice $\RR = \{1,4\}$ yields an induced network
with two disconnected maximal components, Figure~\ref{F:2col_reg} (right).
If nodes 1 and 3 force the same component of some larger network,
the corresponding quasi-quotient has no hyperbolic periodic orbits.

\begin{figure}[htb]
\centerline{
\includegraphics[width=0.75\textwidth]{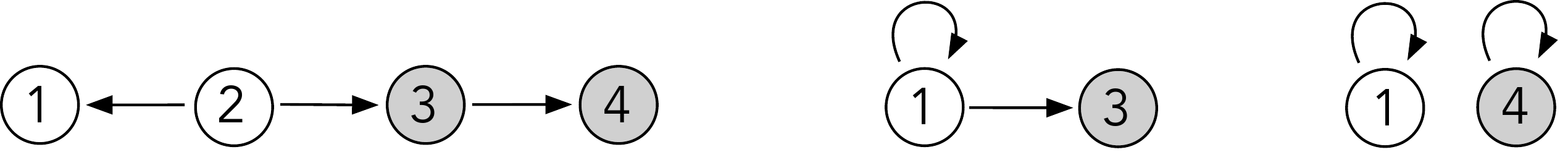}}
\caption{{\em Left}: $2$-coloured regular network. {\em Right}: Two distinct induced networks,
one `good', one `bad'.}
\label{F:2col_reg}
\end{figure}

We do not know whether there always exists such an $\RR$
when $\GG$ has only one maximal component.

\subsection{Strong Hyperbolicity}
\label{S:SH}

This paper relies on the following concept:

\begin{definition}\em
\label{D:strongly_hyperbolic}
A  periodic orbit $\X$ is {\em strongly hyperbolic}
for a colouring $\bowtie$ if $\X$ is hyperbolic and
there exists a set of representatives $\RR$ such that, if necessary after an
arbitrarily small perturbation, the
induced orbit $\X^\RR$ is a hyperbolic periodic orbit
of the induced ODE.

The orbit $\X$ is {\em strongly hyperbolic}
if it is strongly hyperbolic for every colouring $\bowtie$.
\qed\end{definition}

Suppose that $\X$ is hyperbolic, and perturb $f$ to a nearby admissible map $\tilde f$.
There is a locally unique perturbed periodic orbit $\tilde \X$ near $\X$.
Lemma~\ref{L:canonical_ppo} implies that there is a unique 
canonical choice for `the' perturbed induced periodic orbit $\tilde \X^\RR$,
which is near $\X^\RR$.
Thus when $\X$ has a rigid synchrony pattern
we can keep track of each $\X^\RR$ in a meaningful manner
when $f$ is perturbed, even when $\X^\RR$ is not known to be hyperbolic
on $P^\RR$.

The condition of strong hyperbolicity lets us `pre-perturb' the admissible map
$f$ to $\tilde f$, thereby ensuring that without loss of generality 
the uniquely defined periodic orbit $\X^\RR$ is hyperbolic on $P^\RR$.
This step is crucial for the proof of the Local Rigid Synchrony Property.

\subsection{Local Kupka-Smale Theorem}

In Section~\ref{S:RC2C} we infer strong hyperbolicity from
a local version of the Kupka-Smale Theorem:

\begin{lemma}
\label{L:LKS}
Let $f$ be any smooth vector field on $R^k$, with a periodic orbit $\X$ of period $T$.
Then 

{\rm (a)} For all sufficiently small $\delta_1 > 0$ there exists $\delta_1$ with $0 < \delta_2 < \delta_1$,
such that for all $t$ with $0 \leq t \leq 2T$ we have
\[
\psi^t(N_{\delta_2}(\X)) \subseteq N_{\delta_1}(\X)
\]
where $\psi^t$ is the flow of $f$ and $N_\delta$ is the tubular
neighbourhood of $\X$ of radius $\delta$.

{\rm (b)} There exists a smooth perturbation $\tilde f$ of $f$ and
$\delta_3, \delta_4 > 0$ with $\delta_3 \leq \delta_1$ and
$\delta_4 \leq \delta_2$
such that
\[
\tilde\psi^t(N_{\delta_4}(\X)) \subseteq N_{\delta_3}(\X)
\]
where $\tilde\psi^t$ is the flow of $f$,
and every periodic orbit of $\tilde f$ inside $N_{\delta_4}(\X)$ is hyperbolic.
\end{lemma}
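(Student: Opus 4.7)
For part (a), the plan is a standard compactness-plus-continuity argument. The flow map $\Psi(t,x) = \psi^t(x)$ is jointly continuous in $(t,x)$, and $\X$ is invariant under $\psi^t$, so $\Psi(t,x_0) \in \X$ for every $(t,x_0) \in [0,2T] \times \X$. By continuity, each such point has a product neighborhood on which $\Psi$ takes values in $N_{\delta_1}(\X)$. Since $[0,2T] \times \X$ is compact, a finite subcover yields a uniform $\delta_2 > 0$, and shrinking it if necessary gives $\delta_2 < \delta_1$. Equivalently, one can invoke uniform continuity of $\Psi$ on any compact set enclosing $[0,2T] \times \X$.

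For part (b), first apply (a) with $\delta_1$ replaced by $\delta_1/2$ to obtain an auxiliary radius $\delta_2' \in (0,\delta_2]$ such that $\psi^t(N_{\delta_2'}(\X)) \subseteq N_{\delta_1/2}(\X)$ for $t \in [0,2T]$. Using a bump function, replace $f$ by a smooth vector field $f^\star$ that agrees with $f$ on $\overline{N_{\delta_1}(\X)}$ and has compact support in a slightly larger ball $K$; this leaves $\X$, the flow on $N_{\delta_1}(\X)$, and the conclusion of (a) unchanged. Since $f^\star$ is smooth and compactly supported, the standard Kupka-Smale Theorem (Theorem~\ref{T:KupkaSmale}) applies, producing an arbitrarily $C^1$-small smooth perturbation $\tilde f$ of $f^\star$ all of whose periodic orbits are hyperbolic. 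In particular, any periodic orbit of $\tilde f$ inside $N_{\delta_2'}(\X) \subseteq K$ is hyperbolic.

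To recover the flow-containment for $\tilde f$, use Gronwall's inequality: on the compact set $[0,2T] \times \overline{K}$, the flows $\tilde\psi^t$ and $\psi^t$ satisfy $\|\tilde\psi^t(x) - \psi^t(x)\| \leq C(T,K) \|\tilde f - f^\star\|_1$. Choose the Kupka-Smale perturbation $\tilde f$ small enough that this bound is at most $\delta_1/2$. Setting $\delta_3 := \delta_1$ and $\delta_4 := \delta_2'$, the triangle inequality gives $\tilde\psi^t(N_{\delta_4}(\X)) \subseteq N_{\delta_3}(\X)$ for $t \in [0,2T]$, while hyperbolicity of all periodic orbits inside $N_{\delta_4}(\X)$ follows from Kupka-Smale applied to $\tilde f$.

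The main obstacle is coordinating the two layers of control, flow-containment and generic hyperbolicity, so that both hold simultaneously in the same pair of tubes $(N_{\delta_4}(\X), N_{\delta_3}(\X))$. The fix is to execute the steps in the right order: first fix the compact-support reduction, then apply Kupka-Smale once to obtain $\tilde f$, and only afterwards choose $\delta_3, \delta_4$ small enough (via the auxiliary radius $\delta_2'$ and the Gronwall bound) to absorb the $C^0$ deviation of the perturbed flow. A secondary subtlety is that Kupka-Smale in its classical form is stated for compact manifolds or in the Whitney topology; the bump-function reduction is precisely what allows us to apply it within the metric $C^1$ topology used throughout the paper.
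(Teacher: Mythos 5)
Your argument is correct, but it reaches the conclusion by a different route from the paper: the paper's proof is a one-line citation of Lemma 3 of Peixoto~\cite{P66}, which is a \emph{local} perturbation lemma producing, directly, an arbitrarily small perturbation supported in a tubular neighbourhood of $\X$ that makes all nearby periodic orbits hyperbolic. You instead derive the local statement from the \emph{global} Kupka--Smale Theorem~\ref{T:KupkaSmale}: truncate $f$ to compact support, apply genericity of hyperbolicity on the resulting (effectively compact) phase space, and then recover the flow-containment for the perturbed field via Gronwall. Both are sound; your version is self-contained modulo the global theorem, while the citation of Peixoto buys one thing your construction does not automatically give, namely that the perturbation is supported inside the tube and hence $C^1$-small relative to the original $f$ \emph{everywhere}. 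As written, your $\tilde f$ agrees with a truncated field $f^\star$ that differs drastically from $f$ outside $K$, so $\tilde f$ is not literally a small perturbation of $f$ --- and smallness is what the lemma is used for later (Definition~\ref{D:GLKS} and Lemma~\ref{LindiucedLKS} require hyperbolicity to be achievable by an \emph{arbitrarily small} admissible perturbation). The patch is routine and worth stating: multiply the Kupka--Smale increment $\tilde f - f^\star$ by a bump function equal to $1$ on $N_{\delta_1/2}(\X)$ and supported in $N_{\delta_1}(\X)$, and add the result to the original $f$. The glued field is $C^1$-close to $f$ globally and agrees with $\tilde f$ on $N_{\delta_1/2}(\X)$; any periodic orbit starting in $N_{\delta_4}(\X)$ is trapped (by your containment estimate, with $\delta_3 = \delta_1/2$) in the region where the two fields coincide, so it is a periodic orbit of $\tilde f$ and hence hyperbolic. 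With that adjustment your proof fully recovers the content of Peixoto's lemma as the paper uses it. One further pedantic point you implicitly use and could make explicit in part (a): completeness of the flow near $\X$ on $[0,2T]$, which follows from compactness and invariance of $\X$ together with continuity of escape times.
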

\begin{proof}
This is a restatement of Lemma 3 of Peixoto~\cite{P66}. The bound $2T$ on $t$ is introduced
because the perturbed period $\tilde T$ may be (slightly) larger than $T$.
\qed\end{proof}

\begin{definition}\em
\label{D:GLKS}
Let $\GG$ be a network and let $\X$ be a hyperbolic periodic orbit of an admissible
vector field $f$.
If for every colouring $\bowtie$ there exists a set of representatives
$\RR$ such that statement (b) holds for all $\X^\RR$, we say that
$\GG$ is {\em locally Kupka-Smale} for $\X$.
\qed\end{definition}

\begin{lemma}
\label{LindiucedLKS}
If $\GG$ is locally Kupka-Smale for $\X$, then for any $\bowtie$ there
is a choice of $\RR$ such that the 
canonical induced periodic orbit $\X^\RR$ can be made
hyperbolic by an arbitrarily small admissible perturbation.
\end{lemma}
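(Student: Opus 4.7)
The plan is to combine the locally Kupka-Smale hypothesis on $\GG$ with the Lifting property of quasi-quotients (Theorem~\ref{T:QQadmiss}) and the canonical-perturbation machinery of Lemma~\ref{L:canonical_ppo}. The whole argument is done on $\GG^\RR$ first, and then pulled back to $\GG$.

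First I would fix an arbitrary colouring $\bowtie$ and invoke Definition~\ref{D:GLKS} to choose a set of representatives $\RR$ for which statement (b) of Lemma~\ref{L:LKS} applies to the induced orbit $\X^\RR$ of the induced field $f^\RR$ on $P^\RR$. This produces, for any prescribed level of smallness, a smooth perturbation $\tilde g$ of $f^\RR$, together with radii $\delta_3 \leq \delta_1$ and $\delta_4 \leq \delta_2$, such that the flow of $\tilde g$ carries $N_{\delta_4}(\X^\RR)$ into $N_{\delta_3}(\X^\RR)$ for $0 \leq t \leq 2T$, and every periodic orbit of $\tilde g$ that lies inside $N_{\delta_4}(\X^\RR)$ is hyperbolic. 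Setting $p^\RR := \tilde g - f^\RR$ yields a $\GG^\RR$-admissible map whose $C^1$ norm is as small as we wish.

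Next I would apply the Lifting and Smallness of Lift clauses of Theorem~\ref{T:QQadmiss}(b,c) to obtain a $\GG$-admissible map $p$ on $P$ with $\|p\|_1$ arbitrarily small and with $(f+p)^\RR = f^\RR + p^\RR = \tilde g$. Since $\X$ is hyperbolic with rigid synchrony pattern $\bowtie$, Lemma~\ref{L:canonical_ppo} guarantees that the perturbed admissible field $\tilde f = f + p$ has a uniquely defined canonical perturbed periodic orbit $\tilde\X$ lying in $\Delta_{\bowtie}$, whose projection $\tilde\X^\RR$ is a periodic orbit of $\tilde g$ near $\X^\RR$ in the Hausdorff $C^1$ sense.

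It remains to verify that $\tilde\X^\RR \subseteq N_{\delta_4}(\X^\RR)$. This follows by continuous dependence of the canonical perturbation (established through the Implicit Function Theorem applied to a Poincaré return map on $\Sigma$, as invoked prior to Definition~\ref{D:rigid_property}): by shrinking the upper bound on $\|p\|_1$ if necessary, we force $\tilde\X^\RR$ to remain within the prescribed $\delta_4$-neighbourhood. Once $\tilde\X^\RR$ lies inside $N_{\delta_4}(\X^\RR)$, the conclusion of Lemma~\ref{L:LKS}(b) applied to $\tilde g$ yields that $\tilde\X^\RR$ is hyperbolic on $P^\RR$, which is exactly what the lemma claims.

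The only nontrivial point is the tubular-neighbourhood step: the Kupka-Smale output gives us a radius $\delta_4$ measured in $P^\RR$, while the smallness we control directly is the $C^1$-size of $p$ on $P$. The bridge is the canonical-perturbation continuity built into Lemma~\ref{L:canonical_ppo} together with the fact that Theorem~\ref{T:QQadmiss}(c) preserves norms under lifting, so the chain \emph{$\|p\|_1$ small $\Rightarrow$ $\tilde\X$ close to $\X$ in $P$ $\Rightarrow$ $\tilde\X^\RR$ close to $\X^\RR$ in $P^\RR$} can be quantified and iterated to guarantee containment in $N_{\delta_4}(\X^\RR)$. After that matching of two independently chosen smallness thresholds, the rest is bookkeeping.
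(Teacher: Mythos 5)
Your argument is correct and follows essentially the same route as the paper, whose entire proof is the single observation that $\tilde\X^\RR$ lies inside $N_{\delta_4}(\X^\RR)$ and hence falls under the hyperbolicity guarantee of Lemma~\ref{L:LKS}(b). You have simply made explicit the implicit supporting steps: obtaining the $\GG^\RR$-admissible perturbation from Definition~\ref{D:GLKS} and Lemma~\ref{L:LKS}(b), lifting it with Theorem~\ref{T:QQadmiss}(b,c), identifying the canonical perturbed orbit via Lemma~\ref{L:canonical_ppo}, and using continuous dependence (through the Poincar\'e map / Implicit Function Theorem) to keep $\tilde\X^\RR$ inside the tube. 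One small point worth making explicit: for the containment $\tilde\X^\RR \subseteq N_{\delta_4}(\X^\RR)$ to be achievable by shrinking $\|p\|_1$, one needs $\delta_4$ in Lemma~\ref{L:LKS}(b) to stay bounded away from $0$ as the perturbation shrinks; this is built into Peixoto's construction (the tube is fixed first and the perturbation is supported inside it), and the paper's terse proof takes it for granted just as you do.
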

\begin{proof}
The perturbed induced periodic orbit $\tilde\X^\RR$ lies inside $N_{\delta_4}(\X)$.

\qed\end{proof}

\subsection{Stable Isolation}
\label{S:SI}

A weaker rigid property is:

\begin{definition}\em
\label{D:stable_isolation}
A hyperbolic periodic orbit $\X$ is {\em stably isolated}
for $\bowtie$ if there exists $\RR$ such that, if necessary after an
arbitrarily small perturbation:

\noindent
(a) The induced orbit $\X^\RR$ is an isolated periodic orbit
of the induced ODE.

\noindent
(b) This property is preserved by any further sufficiently small perturbation.

The orbit $\X$ is {\em stably isolated}
if it is stably isolated for every colouring $\bowtie$.
\qed\end{definition}

\begin{remarks}\em
(a) Condition (b) ensures that `having a stably isolated periodic orbit' 
is an open property of $f$.
Here `stable' refers to structural stability, not stability to 
perturbations of initial conditions such as asymptotic or linear stability.

(b) If $\bowtie$ is balanced, any hyperbolic $\X$ is stably isolated,
because $\Delta_{\bowtie}$ is flow-invariant,
so $\X^\RR$ is hyperbolic.
\qed\end{remarks}

\begin{proposition}
\label{P:SH->SI}
If $\X$ is strongly hyperbolic for $\bowtie$ then it is stably isolated for $\bowtie$.

If $\X$ is strongly hyperbolic then it is stably isolated.
\end{proposition}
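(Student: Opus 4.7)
The plan is to deduce stable isolation directly from strong hyperbolicity by invoking the two standard consequences of hyperbolicity of a periodic orbit: local uniqueness (isolation) and persistence of hyperbolicity under small perturbations. No delicate network-specific argument is needed beyond matching up the choices of representatives $\RR$.

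Fix a colouring $\bowtie$ and assume $\X$ is strongly hyperbolic for $\bowtie$. By Definition~\ref{D:strongly_hyperbolic}, choose the set of representatives $\RR$ guaranteed by strong hyperbolicity; after an arbitrarily small admissible perturbation of $f$ (which we absorb into $f$) the induced orbit $\X^\RR$ is a hyperbolic periodic orbit of the induced ODE on $P^\RR$. I will use this same $\RR$ to verify the two clauses of Definition~\ref{D:stable_isolation}.

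For clause (a), isolation of $\X^\RR$ follows by the classical Poincar\'e section argument: take a local cross-section $\Sigma \subset P^\RR$ transverse to $\X^\RR$ at some point $y_0 = x^\RR(t_0)$, and let $\Pi: \Sigma \to \Sigma$ be the associated first-return map. Hyperbolicity of $\X^\RR$ means that $\mathrm{D}\Pi|_{y_0}$ has no eigenvalues on the unit circle, so $\mathrm{id}-\mathrm{D}\Pi|_{y_0}$ is invertible; the Implicit Function Theorem then gives a neighbourhood of $y_0$ in $\Sigma$ in which $y_0$ is the unique fixed point of $\Pi$, and hence a tubular neighbourhood of $\X^\RR$ in $P^\RR$ in which $\X^\RR$ is the only periodic orbit of the induced ODE. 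This is exactly isolation.

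For clause (b), let $\tilde f^\RR$ be any sufficiently small perturbation of the induced vector field $f^\RR$. By Lemma~\ref{L:hyperbolic} applied on $P^\RR$, there is a locally unique perturbed periodic orbit $\tilde\X^\RR$ near $\X^\RR$. Moreover, the derivative of the perturbed Poincar\'e map depends continuously on $\tilde f^\RR$ in the $C^1$ topology, so for $\|\tilde f^\RR-f^\RR\|_1$ small its eigenvalues remain off the unit circle. Thus $\tilde\X^\RR$ is again hyperbolic, and the Poincar\'e section argument of the previous paragraph shows it is isolated. This yields clause (b) and proves that $\X$ is stably isolated for $\bowtie$.

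The second assertion follows from the first: if $\X$ is strongly hyperbolic then by definition it is strongly hyperbolic for every colouring $\bowtie$, and the argument above produces, for each $\bowtie$, a set of representatives $\RR$ witnessing stable isolation. Hence $\X$ is stably isolated. The only point requiring any care is that the pre-perturbation used to make $\X^\RR$ hyperbolic is admissible on $\GG$ and can be chosen arbitrarily small, which is already built into Definition~\ref{D:strongly_hyperbolic}; everything else is standard hyperbolic persistence.
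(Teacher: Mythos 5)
Your proof is correct and follows essentially the same route as the paper, which disposes of the proposition in one line by noting that hyperbolic periodic orbits are isolated by the uniqueness assertion of Lemma~\ref{L:hyperbolic}. You simply spell out the standard details (the Poincar\'e-section/Implicit Function Theorem argument for isolation and the openness of hyperbolicity in the $C^1$ topology for persistence), which the paper treats as known.
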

\begin{proof}
Hyperbolic periodic orbits are isolated by the uniqueness assertion in Lemma~\ref{L:hyperbolic}.
\qed\end{proof}

\subsection{Kupka-Smale Networks}
\label{S:KSN}

There is a connection between strong hyperbolicity and the Kupka-Smale Theorem,
requiring only analogues of (a) and (b) in Theorem~\ref{T:KupkaSmale}. This 
motivates:

\begin{definition}\em
\label{D:NKS}
A network $\GG$ is a {\em Kupka-Smale} network if properties (a) and (b)
in Theorem~\ref{T:KupkaSmale} hold for a generic set of admissible vector fields.
\qed\end{definition}

\begin{theorem}
\label{T:SH_implications}
Let $\X$ be a hyperbolic periodic orbit and let $\bowtie$
be a colouring.

{\rm (a)} If $\GG^\RR$ is a Kupka-Smale network for some set of representatives $\RR$
for $\bowtie$ then
$\X$ is stably isolated for $\bowtie$.

{\rm (b)} If $\GG^\RR$ is locally Kupka-Smale near $\X^\RR$ for some $\RR$, then
$\X$ is stably isolated for $\bowtie$.

{\rm (c)} If $\X$ is stably isolated for $\bowtie$ then it is strongly hyperbolic
for $\bowtie$.

{\rm (d)} If $\X$ is stably isolated for $\bowtie$ then $\X$ is not a
limit of a continuum of periodic orbits, whose members do not equal $\X$ on
some open neighbourhood.

{\rm (e)} The implications {\rm (a)--(d)} hold when `for $\bowtie$' is deleted from all of them.
\end{theorem}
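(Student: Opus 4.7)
My plan is to prove (a)--(d) in turn and observe that (e) is the universal-in-$\bowtie$ version, obtained by quantifying each argument over all colourings.

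For (a), I combine the Kupka-Smale hypothesis with the lifting statement of Theorem~\ref{T:QQadmiss}. The induced map $f^\RR$ is $\GG^\RR$-admissible by part (a) of that theorem, so the assumed genericity provides an arbitrarily small $\GG^\RR$-admissible perturbation $p^\RR$ making every periodic orbit of $f^\RR + p^\RR$ hyperbolic; parts (b,c) of the theorem lift $p^\RR$ to a $\GG$-admissible $p$ of equally small $C^1$-norm. Lemma~\ref{L:hyperbolic} then gives a unique nearby hyperbolic perturbed orbit $\tilde\X$ of $f+p$, and $\tilde\X^\RR$ is a periodic orbit of $f^\RR + p^\RR$, hence hyperbolic and therefore isolated; openness of hyperbolicity supplies clause (b) of Definition~\ref{D:stable_isolation}. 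Part (b) is identical, using Lemma~\ref{LindiucedLKS} in place of the global hypothesis: the local Kupka-Smale statement provides hyperbolicity only inside a tubular neighbourhood of $\X^\RR$, but by Lemma~\ref{L:canonical_ppo} the canonical perturbed $\tilde\X^\RR$ lies in such a neighbourhood, which is all that is required.

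For (c), I argue by contradiction. Suppose $\X$ is stably isolated for $\bowtie$ with representatives $\RR$ and pre-perturbation $\tilde f$, but the pre-perturbed induced orbit $\tilde\X^\RR$ is not hyperbolic. Then its linearised Poincar\'e first-return map carries an eigenvalue $\lambda$ on the unit circle other than the trivial $1$, and standard codimension-one bifurcation theory yields arbitrarily small perturbations of the induced vector field that create additional periodic orbits in every neighbourhood of $\tilde\X^\RR$---a saddle-node pair when $\lambda=1$ with nontrivial kernel of $\mathrm{D}P-\id$, a period-doubled cycle when $\lambda=-1$, and a Neimark-Sacker invariant torus containing resonant periodic orbits when $\lambda=e^{i\theta}$ is complex. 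Realising each unfolding as a $\GG^\RR$-admissible perturbation and lifting via Theorem~\ref{T:QQadmiss}(b,c) destroys isolation under perturbation and contradicts clause (b) of Definition~\ref{D:stable_isolation}; hence $\tilde\X^\RR$ must in fact be hyperbolic, which is precisely what is required for strong hyperbolicity with the same pre-perturbation $\tilde f$. The main obstacle is to verify that each standard unfolding direction can be realised as a $\GG^\RR$-admissible perturbation; I expect this to go through because admissibility constrains only the coupling structure, leaving the intrinsic $x_r$-dependence free for the required insertions, but this step is the technical heart of the proof.

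For (d), suppose $\X$ is the limit of a continuum $\{\X_s\}$ of periodic orbits of $f$ with each $\X_s \neq \X$ on some open interval. The projected orbits $\X_s^\RR$ form a continuum in $P^\RR$ limiting on $\X^\RR$, and by continuous dependence of periodic orbits on the vector field this feature persists under any sufficiently small admissible perturbation, so the perturbed $\tilde\X^\RR$ is again approached by distinct nearby periodic orbits, violating isolation and contradicting stable isolation. Finally, (e) is immediate: each of (a)--(d) was proved for a single arbitrary $\bowtie$, so quantifying the hypotheses and conclusions over all colourings yields the unqualified statements.
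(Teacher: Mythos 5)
The paper's own ``proof'' of this theorem is the single sentence ``All implications are trivial,'' so there is no detailed argument to compare against; your write-up is necessarily a reconstruction. For parts (a), (b), (d) and (e) your reconstruction is sound and fills in what the author leaves implicit: Kupka--Smale genericity (global or local) for $\GG^\RR$ plus the lifting clauses of Theorem~\ref{T:QQadmiss} gives a hyperbolic, hence isolated, canonical induced orbit, and isolation is incompatible with being a limit of a continuum. Two small cautions: your appeal to Lemma~\ref{L:canonical_ppo} silently imports the hypothesis that $\bowtie$ is a rigid synchrony pattern, which is not in the statement of the theorem (the paper has the same issue); and in (d) the continuum in question lives in $P^\RR$, not in $P$ --- since $\X$ is hyperbolic it is automatically isolated in $P$, so your opening hypothesis ``$\X$ is the limit of a continuum of periodic orbits of $f$'' is never satisfiable and the intended content concerns $\X^\RR$ and the induced ODE, possibly after the pre-perturbation allowed by Definition~\ref{D:stable_isolation}.

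The genuine gap is in (c), and you have correctly located it yourself. Your argument needs two things that are not available. First, a non-hyperbolic periodic orbit need not sit at a generic codimension-one bifurcation: the unit-circle spectrum of the linearised return map can be multiple or arbitrarily degenerate, and the saddle-node/period-doubling/Neimark--Sacker trichotomy (each of which itself needs non-degenerate normal-form coefficients to produce extra periodic orbits on an arbitrarily small perturbation) does not exhaust the possibilities. Second, and decisively, the step you defer --- realising the chosen unfolding direction as a $\GG^\RR$-admissible perturbation --- is exactly the network Kupka--Smale problem that the paper states is open and that motivates introducing strong hyperbolicity as a hypothesis in the first place; the induced components $f_r(x_r,x_{[T(r)]})$ have repeated and coupled arguments, so one cannot simply insert an arbitrary perturbation of the Poincar\'e map. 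If this step ``went through because admissibility constrains only the coupling structure,'' the paper's main conditional hypothesis would be removable. Note also that (c) as stated is the converse of Proposition~\ref{P:SH->SI}, which the paper proves in the other direction and uses to call stable isolation the \emph{weaker} property; the two statements together would make the notions equivalent. The honest conclusion is that (c) is either misstated (direction reversed) or is not trivial and is not established by your argument; the remaining parts stand.
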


\begin{proof}
All implications are trivial.
\qed\end{proof}

In the remainder of the paper we prove that when the local synchrony
pattern $\bowtie$ of $\X$ is rigid,
the property stated in (d)
implies that $\bowtie$ is balanced. That is, $\GG$ has the Rigid Synchrony Property for $\X$.
The implications (a)--(d) provide alternative strategies for proving the 
Local Rigid Synchrony Property in specific cases.

\section{Local Rigidity}
\label{S:LR}

In this section we compare and contrast local and global versions of rigidity.

\subsection{Synchrony Patterns on Subsets}
\label{S:SPS}

Let $\X = \{x(t): t \in \R \}$ as usual. Let $J \subseteq \R$ be any subset.
Define the {\em local synchrony pattern of $\X$ on $J$} to be
the colouring $\bowtie_J$ defined by
\begin{equation}
\label{E:bowtieJ}
c \bowtie_J d \iff x_c(t) \equiv x_d(t) \ \forall t \in J
\end{equation}
If $J = \{x\}$ is a singleton, we write $\bowtie_x$ instead of $\bowtie_{\{x\}}$.

In this notation the {\em global synchrony pattern} of $\X$ is the colouring $\bowtie_\R$.
However, it is just as valid, and more intuitive to denote it by $\bowtie_\X$, which
we do from now on. The corresponding polydiagonal is accordingly denoted $\Delta_\X$.
This pattern is {\em globally rigid} if, for small enough perturbations,
the perturbed periodic orbit has the same global synchrony pattern.
The usual form of the Rigid Synchrony Conjecture (see~\cite[Section 10]{GS06}) 
states that any rigid global synchrony pattern $\bowtie_\X$ is balanced.

It is easy to establish an alternative characterisation of $\bowtie_\X$ in terms
of polydiagonals:
\begin{proposition}
\label{P:global_synch}
The global synchrony pattern of $\X$ is the unique colouring $\bowtie$ such that 

\noindent
{\em (a)} $\X \subseteq \Delta_{\bowtie}$

\noindent
{\em (b)} $\X$ is not contained in any polydiagonal strictly smaller than $\Delta_{\bowtie}$.
\qed\end{proposition}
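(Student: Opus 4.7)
The plan is to take $\bowtie=\bowtie_\X$ as the candidate witnessing existence, and then deduce uniqueness from the order-reversing correspondence between colourings and polydiagonals together with the extremality condition (b). Throughout I read $\Delta_\bowtie$ as the linear subspace defined by the implications $c\bowtie d\Rightarrow x_c=x_d$ (this is the only reading compatible with (a) holding for the global synchrony pattern, since $\X$ can pass instantaneously through finer strata); the finer the colouring, the smaller the polydiagonal, and the assignment $\bowtie\mapsto\Delta_\bowtie$ is injective because $\bowtie$ can be recovered from $\Delta_\bowtie$ as the relation ``$c\bowtie d$ iff $x_c=x_d$ identically on $\Delta_\bowtie$''.

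For existence, property (a) with $\bowtie=\bowtie_\X$ is immediate from \eqref{E:bowtieJ} applied with $J=\R$: if $c\bowtie_\X d$ then $x_c(t)\equiv x_d(t)$, so $x(t)\in\Delta_{\bowtie_\X}$ for every $t$. For (b) I argue by contradiction. Suppose $\X\subseteq\Delta_{\bowtie'}$ with $\Delta_{\bowtie'}\subsetneq\Delta_{\bowtie_\X}$; then $\bowtie'$ is strictly finer than $\bowtie_\X$, so some pair $c,d$ satisfies $c\bowtie' d$ while $c\not\bowtie_\X d$. The inclusion $\X\subseteq\Delta_{\bowtie'}$ forces $x_c(t)\equiv x_d(t)$ for all $t\in\R$, which by the very definition of $\bowtie_\X$ gives $c\bowtie_\X d$, contradicting the choice of $(c,d)$.

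For uniqueness, suppose $\bowtie$ also satisfies (a) and (b). From (a) every pair $c\bowtie d$ has $x_c(t)\equiv x_d(t)$, so $c\bowtie_\X d$; therefore $\bowtie$ refines $\bowtie_\X$ and $\Delta_\bowtie\subseteq\Delta_{\bowtie_\X}$. If the inclusion were strict, the fact that $\X\subseteq\Delta_\bowtie$ would contradict (b) applied to $\bowtie_\X$. Hence $\Delta_\bowtie=\Delta_{\bowtie_\X}$, and injectivity of the polydiagonal assignment yields $\bowtie=\bowtie_\X$.

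Given the argument, the only mildly delicate point is ruling out pathological readings of the polydiagonal convention; once that is fixed, the statement is a clean consequence of the Galois-type correspondence between the lattice of colourings and the lattice of polydiagonals. I do not expect any genuinely hard step — the substantive dynamical content (that $\bowtie_\X$ is a well-defined colouring) is already built into the definition \eqref{E:bowtieJ}, and the proposition amounts to recasting that definition in lattice-theoretic form, which will be useful later for phrasing rigidity as an openness property in the lattice of colourings.
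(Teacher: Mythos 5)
Your overall route --- exhibit $\bowtie_\X$ as the witness and then appeal to the order-reversing correspondence between colourings and polydiagonals --- is exactly the intended one (the paper offers nothing beyond ``uniqueness follows from the lattice structure''), and your existence argument is correct, including the sensible decision to read $\Delta_{\bowtie}$ as the closed subspace $\{x : c\bowtie d \implies x_c = x_d\}$ of Section~\ref{S:lattice} rather than the $\iff$ version appearing earlier in the paper.

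There is, however, one reversed inclusion you should fix. You assert ``the finer the colouring, the smaller the polydiagonal,'' but by Definition~\ref{D:finer} a finer colouring makes \emph{fewer} identifications, hence imposes fewer equations $x_c=x_d$, hence has a \emph{larger} polydiagonal; this is Proposition~\ref{P:finer}(c): $\bowtie_1\,\preceq\,\bowtie_2 \iff \Delta_{\bowtie_1}\supseteq\Delta_{\bowtie_2}$. In the existence half this is only a mislabelling (your $\bowtie'$ is strictly \emph{coarser} than $\bowtie_\X$, not finer), and the pair $(c,d)$ with $c\bowtie' d$ but $c\not\bowtie_\X d$ that you extract is the right one, so that half stands. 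In the uniqueness half the error is load-bearing: from $c\bowtie d\Rightarrow c\bowtie_\X d$ the correct conclusion is $\Delta_{\bowtie_\X}\subseteq\Delta_{\bowtie}$, not the reverse, and as written you rule out only the strict inclusion $\Delta_{\bowtie}\subsetneq\Delta_{\bowtie_\X}$ --- which cannot occur anyway --- while leaving the genuinely possible alternative $\Delta_{\bowtie_\X}\subsetneq\Delta_{\bowtie}$ unaddressed. The repair is immediate and uses only ingredients you already have: if that inclusion were strict, then $\Delta_{\bowtie_\X}$ would be a polydiagonal strictly smaller than $\Delta_{\bowtie}$ containing $\X$, contradicting hypothesis (b) for $\bowtie$. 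With that one direction corrected, the proof is complete.
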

Uniqueness follows from the lattice structure.

\subsection{Locally Rigid Synchrony}
\label{S:LRS}
For technical reasons, discussed in Section~\ref{S:CLS}, it is better to work with
a local version of rigidity. (The same point was made in~\cite{GRW10,GRW12}.)
We modify Definition~\ref{D:rigid_property}:

\begin{definition}\em
\label{D:_local_rigid_property}
A property $\PP$ of a hyperbolic periodic orbit
$\X$ relative to its period $T$ is {\em locally rigid near} $t_0 \in \R$ if 
\begin{itemize}
\item[\rm (a)]
There exists an open interval $J \subseteq \R$ with $t_0 \in J$ such that
$x^0(t)$ has property $\PP$ for all $t \in J$.
\item[\rm (b)]
There exists an open interval $J_1 \subseteq \R$ with $t_0 \in J_1$ such that
if $\eps$ is small enough, $x^\eps(t)$ has property $\PP$ (relative to $T^\eps)$
for all $t \in J_1$.\qed
\end{itemize}
\end{definition}

\begin{remark}\em
This $1$-parameter version is sufficient for the purposes of this paper. We do
not need the bound on $\eps$ to be uniform in $p$.
\qed\end{remark}

\subsection{The Lattice of Colourings}
\label{S:lattice}

In order to compare $x_c(t)$ with $x_d(t)$, they must both belong to the same space,
so we require $P_c = P_d$. Any colouring defined by synchrony
properties satisfies this condition, so from now on we do not refer to it explicitly.
Formally, for any colouring $\bowtie$, the condition $c \bowtie d$
requires $c$ and $d$ to be state equivalent, denoted by $c \sim_S d$.

Recall that every colouring $\bowtie$ defines a polydiagonal
\[
\Delta_{\bowtie} = \{x \in P: c  \bowtie d \implies x_c = x_d \}
\]
and it also defines a partition whose parts correspond to the colours.
There is a natural partial order $\preceq$ on colourings:
\begin{definition}\em
\label{D:finer}
A colouring $\bowtie_1$ is {\em finer than} a colouring $\bowtie_2$, written $\bowtie_1 \preceq\, \bowtie_2$, if
\[
c \bowtie_1 d \implies c \bowtie_2 d
\] 
Contrary to normal English, this includes the possibility that the colourings
are the same, up to a permutation of the colours.
We also say that $\bowtie_2$ is {\em coarser than} $\bowtie_1$. To remove
the possibility of equality we use the terms {\em strictly finer} and {\em strictly
coarser}.
\qed\end{definition}

The following proposition is obvious:
\begin{proposition}
\label{P:finer}
The following properties are equivalent:

\noindent {\em (a)} The colouring $\bowtie_1$ is finer than $\bowtie_2$.

\noindent {\em (b)} Every part of the partition defined by 
$\bowtie_1$ is contained in some part of the partition defined by $\bowtie_2$.

\noindent {\em (c)} $\Delta_{\bowtie_1} \supseteq \Delta_{\bowtie_2}$.
\qed\end{proposition}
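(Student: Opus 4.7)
The proposition is essentially a tautological reformulation of definitions, so the plan is simply to verify the chain of implications (a) $\Rightarrow$ (b) $\Rightarrow$ (c) $\Rightarrow$ (a), unwrapping the definitions of `finer', of the partition into parts, and of the polydiagonal $\Delta_\bowtie$. There is no real obstacle; the only care needed is in the direction (a) $\Leftrightarrow$ (c), where one must track that a \emph{finer} colouring on indices corresponds to a \emph{larger} (not smaller) polydiagonal, because imposing fewer equalities $x_c = x_d$ gives a larger solution set.

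For (a) $\Rightarrow$ (b), I would fix a part $A$ of the partition defined by $\bowtie_1$, pick any $c \in A$, and let $B$ be the part of $\bowtie_2$ containing $c$. For any $d \in A$ we have $c \bowtie_1 d$, hence $c \bowtie_2 d$ by hypothesis, so $d \in B$; thus $A \subseteq B$. For (b) $\Rightarrow$ (c), take $x \in \Delta_{\bowtie_2}$ and nodes $c,d$ with $c \bowtie_1 d$. By (b), the $\bowtie_1$-part containing $c$ and $d$ lies in a single $\bowtie_2$-part, so $c \bowtie_2 d$, whence $x_c = x_d$; thus $x \in \Delta_{\bowtie_1}$.

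For (c) $\Rightarrow$ (a), suppose $c \bowtie_1 d$; I need to show $c \bowtie_2 d$. Consider the point $x \in P$ defined by $x_e = v$ for every node $e$ in the $\bowtie_2$-part of $c$ (for some fixed nonzero $v \in P_c$) and $x_e = 0$ otherwise. By construction $x \in \Delta_{\bowtie_2}$, so by (c) we have $x \in \Delta_{\bowtie_1}$, which forces $x_c = x_d$; since $x_c = v \neq 0$, the node $d$ must lie in the same $\bowtie_2$-part as $c$, i.e.\ $c \bowtie_2 d$. This uses that $P_c = P_d$, which is guaranteed since both colourings are defined on state-equivalent pairs as noted in Section~\ref{S:lattice}.

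The entire argument is a few lines; nothing here requires the network or dynamical machinery developed in the preceding sections. The proof can simply be stated as `Immediate from the definitions' if brevity is preferred, with the three implications listed as above to make the equivalence transparent.
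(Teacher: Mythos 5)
Your proof is correct, and it is the natural unwinding of the definitions; the paper simply declares Proposition~\ref{P:finer} ``obvious'' and offers no proof, so there is no alternative argument to compare against. Your chain (a)~$\Rightarrow$~(b)~$\Rightarrow$~(c)~$\Rightarrow$~(a) is clean, and you correctly flag the one place where care is needed --- the direction (c)~$\Rightarrow$~(a), where one must actually exhibit a point of $\Delta_{\bowtie_2}$ that separates nodes, and must use the standing convention (Section~\ref{S:lattice}) that $c \bowtie d$ implies $P_c = P_d$, so that assigning the same nonzero $v$ to all nodes in the $\bowtie_2$-class of $c$ makes sense. The only implicit assumption you do not state is that node spaces are nonzero (so that $v \neq 0$ exists), which is a universal standing hypothesis in this framework; it is worth a half-sentence if writing this out. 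Note also that the paper gives two nominally different formulas for $\Delta_{\bowtie}$ (one with $\iff$ in Section~\ref{S:DN}, one with $\implies$ in Section~\ref{S:lattice}); you are, correctly, using the $\implies$ version, which is the linear subspace needed for the lattice duality and for property (c) to be meaningful as an inclusion.
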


Section 4 of~\cite{S07} proves, in slightly different terminology,
that with this partial ordering the set of all
colourings is a lattice in the sense of partially ordered sets, Davey and Priestley~\cite{DP90}.
Lemma 4.3 of that paper proves that this
lattice is dual to the lattice of polydiagonals under inclusion, property (c) of Proposition~\ref{P:finer}.
The balanced colourings form a sublattice, as do the balanced polydiagonals.
Since the number of polydiagonals is finite, these are finite lattices.

\subsection{Semicontinuity of Colourings}
The next proposition, which is well known and easy to prove,
states that sufficiently small changes to
$x \in P$ can make the colouring finer, but not strictly coarser.
(Recall that in Definition~\ref{D:finer} the terms `finer' and `coarser' permit equality.)
\begin{proposition}
\label{P:semicon}
If $x \in P$, with synchrony pattern $\bowtie_x$, and $y \in P$ with $\|y-x\|$ sufficiently small, then
$\bowtie_x$ is finer than $\bowtie_y$.
\end{proposition}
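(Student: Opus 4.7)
The plan is to combine an elementary continuity argument with the finiteness of the node set $\CC$. Since $c \bowtie_x d$ iff $x_c = x_d$ (with $c \sim_S d$), the content of the proposition is that a small enough perturbation of $x$ cannot turn componentwise inequalities into equalities but may do the reverse --- a purely combinatorial observation that uses nothing about network structure or dynamics.

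I would proceed by contrapositive. For each pair $(c,d)$ of state-equivalent nodes with $x_c \neq x_d$, set $\delta_{cd} := \| x_c - x_d\| > 0$. Because $\CC$ is finite, only finitely many such pairs exist, so $\delta := \tfrac{1}{3}\min_{(c,d)}\delta_{cd}$ is strictly positive. For any $y \in P$ with $\|y-x\| < \delta$ in the node-wise max norm~\eqref{E:C1normP}, the triangle inequality gives
\[
\|y_c - y_d\| \;\geq\; \|x_c - x_d\| - \|y_c - x_c\| - \|y_d - x_d\| \;>\; \delta_{cd} - 2\delta \;\geq\; \delta_{cd}/3 \;>\; 0,
\]
so $y_c \neq y_d$, and hence $(c,d)$ also fails to be identified by $\bowtie_y$.

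Taking the contrapositive, every pair identified by $\bowtie_y$ is already identified by $\bowtie_x$, which is the desired comparison between the two colourings and is equivalent by Proposition~\ref{P:finer}(c) to the polydiagonal containment $\Delta_{\bowtie_y} \supseteq \Delta_{\bowtie_x}$. No genuine obstacle arises; the only essential point is to invoke finiteness of $\CC$ to turn the finitely many pointwise continuity bounds into a single uniform $\delta$ valid for all non-synchronous pairs simultaneously.
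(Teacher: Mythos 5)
Your proposal is correct and takes essentially the same triangle-inequality argument as the paper. The only (minor) difference is that you make explicit the step of taking a minimum over the finitely many non-synchronous pairs to produce a single uniform $\delta$, whereas the paper's proof handles one pair $(c,d)$ and leaves that uniformity implicit.
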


\begin{proof}
Suppose that $x_c \neq x_d$, and let $\|x_c-x_d\| = \delta > 0$. Suppose that 
$\|x-y\| \leq \delta/3$. Then
\beqn
\delta &=& \|x_c-x_d\| \leq \|x_c-y_c\|+\|y_c-y_d\| +\|y_d-x_d\| \\
	& \leq & \delta/3 + \|y_c-y_d\| + \delta/3
\eeqn
so $\|y_c-y_d\| \geq \delta/3 > 0$.
\qed\end{proof}

Thus one implication of local rigidity is that the synchrony pattern at $x(t_0) \in \X$ 
does not become coarser at sufficiently close points $\tilde{x}(t) \in \tilde\X$.

Intuitively, Proposition~\ref{P:semicon} states that small perturbations cannot create new equalities of coordinates,
but they might break up existing equalities. Equivalently, 
small perturbations cannot make the synchrony colouring coarser, 
but they might make it finer. Another equivalent condition is that
small perturbations cannot make the polydiagonal
$\Delta_{\bowtie_x}$ smaller, but they might make it larger.
Technically, the number of colours is {\em lower semicontinuous} with respect to
small perturbations, It\^o~\cite{I93}.

\subsection{Generic Points}

Given $\X$, the synchrony pattern $\bowtie_{x(t_0)}$ at a point
$x(t_0) \in \X$ is defined by
\[
c \bowtie_{x(t_0)} d \iff x_c(t_0) = x_d(t_0)
\]
In general this pattern may vary with $t_0$. By Proposition~\ref{P:semicon},
synchrony patterns are lower semicontinuous.
Intuitively, under sufficiently small perturbations colour
clusters can break up, but not merge.

In Section~\ref{S:SPS}
we defined the local synchrony pattern on an interval $J$.
Depending on $J$, this pattern might change as $t$ runs over $J$.
Semicontinuity implies that, by shrinking $J$ if necessary, we can assume that
\[
\bowtie_{x(t_0)} =\, \bowtie_{x(t_1)} \quad \forall t_0,t_1 \in J
\]
We assume this from now on.

When this condition holds, any point in $x(J)= \{x(t):t\in J\}$ is a {\em generic point}.
That is, it has the same synchrony pattern as the local pattern on $J$.
This result is a trivial consequence of Definition~\ref{D:LRSP}, but we state
it explicitly because it is central to the proof of the Rigid Synchrony 
Conjecture. It is valid for a local synchrony pattern, but need not be for 
the global synchrony pattern. Although the proof is trivial,
this observation is crucial to the construction of suitable perturbations,
and is why we consider locally rigid synchrony instead of globally rigid synchrony. 

\begin{proposition}
\label{P:separation}
Let $\bowtie_{x(t_0)}$ be the local rigid synchrony pattern for $\X$ at $x(t_0)$.
If $\RR$ is a set of representatives for $\bowtie_{x(t_0)}$ then
\[
i \neq j \in \RR \implies x_i(t_0) \neq x_j(t_0)
\]
\end{proposition}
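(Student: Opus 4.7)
The plan is to unpack the definitions and exploit the generic point observation made immediately before the statement. By definition, the local synchrony pattern $\bowtie_{x(t_0)}$ is the equivalence relation on $\CC$ determined by synchronous pairs, and an element $c$ lies in the same class as $d$ precisely when $x_c(t) \equiv x_d(t)$ on some open interval $J \ni t_0$. Since we are working with the \emph{local} pattern and Proposition~\ref{P:semicon} gives lower semicontinuity of the colouring, I would first shrink $J$ (as discussed in Section~\ref{S:LRS}) so that the pointwise colouring $\bowtie_{x(t)}$ coincides with $\bowtie_{x(t_0)}$ for every $t \in J$. This is precisely the generic point property: every point of $x(J)$ realises the equivalence classes of $\bowtie_{x(t_0)}$ as the sets of nodes with equal coordinate values.

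With that reduction in place, the conclusion is immediate. A set of representatives $\RR$ for $\bowtie_{x(t_0)}$ contains, by definition, exactly one node from each equivalence class, so any two distinct elements $i \neq j$ in $\RR$ lie in distinct classes; that is, $i \not\bowtie_{x(t_0)} j$. Using the generic point identification from the previous paragraph, this is equivalent to $x_i(t_0) \neq x_j(t_0)$, which is the desired conclusion. There is no real obstacle here: the only substantive point is the preliminary shrinking of $J$ to guarantee that $t_0$ is a generic point of the local synchrony pattern, and this is why the statement is formulated with the \emph{local} (rather than a purely pointwise) notion of rigid synchrony.
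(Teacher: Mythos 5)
Your proposal is correct and follows essentially the same route as the paper: the paper's one-line proof simply notes that $x_i(t_0) = x_j(t_0)$ forces $i \bowtie_{x(t_0)} j$ (by the defining ``iff'' in Definition~\ref{D:LRSP}, i.e.\ the generic-point property you invoke), whence $i=j$ since $\RR$ is a set of representatives. Your extra discussion of shrinking $J$ via semicontinuity is consistent with the conventions the paper establishes just before the proposition, so there is no gap.
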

\begin{proof}
If $x_i(t_0) = x_j(t_0)$ then $ i \bowtie_{x(t_0)} j$, so $i = j$ since $\RR$
is a set of representatives.
\qed\end{proof}

Local synchrony is technically more tractable than global synchrony 
for several reasons. One is that generic points need not exist for
global synchrony patterns: see Section~\ref{S:CLS}.
(Existence of a generic point is claimed in \cite{SP07}, but the proof there
is fallacious.) Another is that in the global case,
the perturbation technique requires admissible perturbations to vanish near
the whole of $\X$, but not at certain other points.
In certain circumstances this can conflict with admissibility.
Local perturbations --- those with small support --- avoid this problem.

Ironically, once we have used local rigid synchrony to prove
that (subject to strong hyperbolicity) a network has the 
Local Rigid Synchrony Property, we can deduce
the Global Rigid Synchrony Property and show that a rigid global
synchrony pattern is the same as any local one. But working from the beginning
with rigid global synchrony runs into the technical difficulties
mentioned above. See Section~\ref{S:GRS}.

\subsection{Changes in Local Synchrony}
\label{S:CLS}

In general dynamical system, different points on a periodic orbit can 
have different synchrony patterns. 
Moreover, a general dynamical system can be viewed
as a fully inhomogeneous network ODE with all-to-all coupling.
Therefore we must deal with the
possibility of a periodic orbit akin to Figure~\ref{F:Delta_changes}.

\begin{figure}[h!]
\centerline{
\includegraphics[width=.55\textwidth]{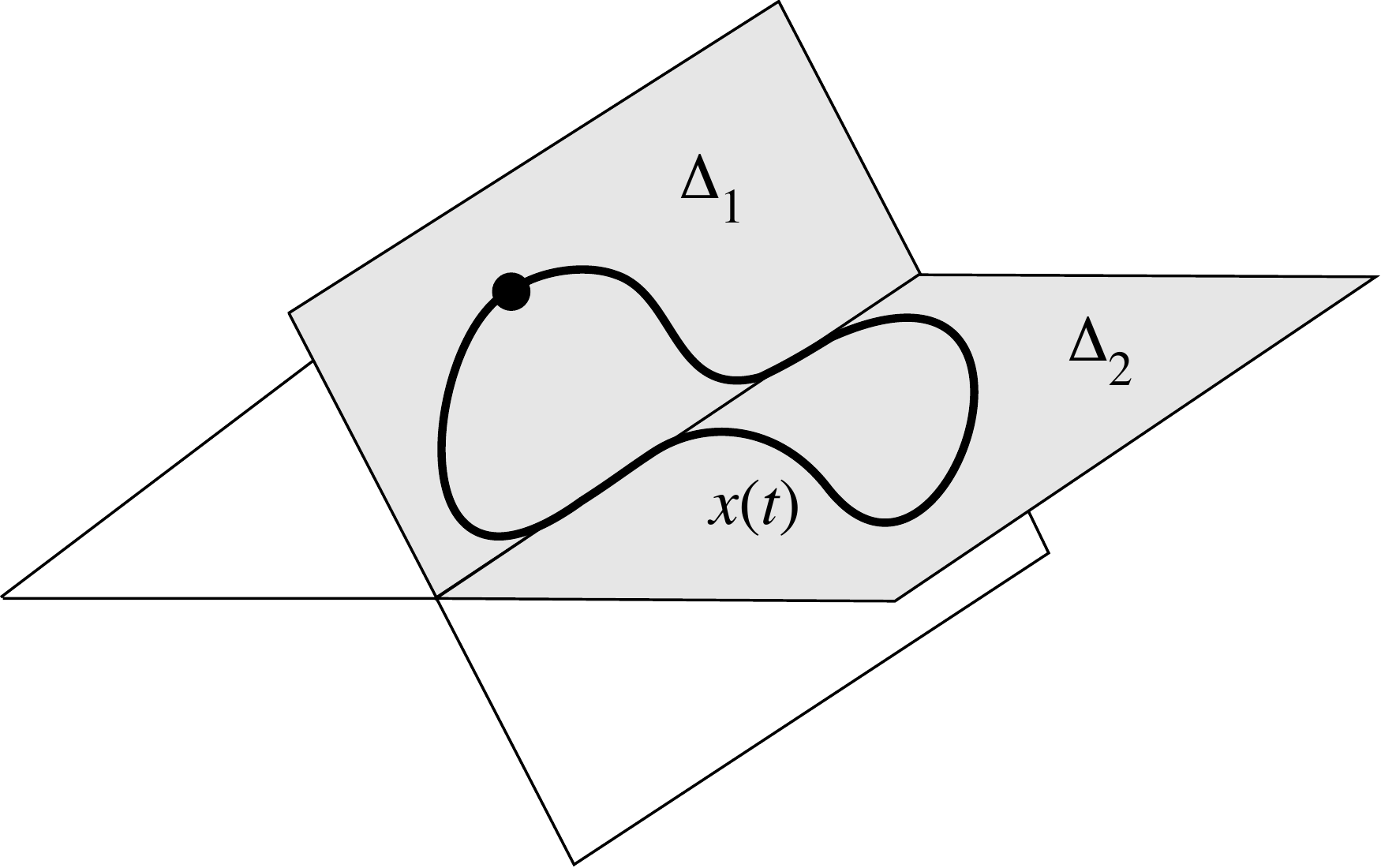}}
\caption{A periodic orbit with more than one local synchrony pattern,
which potentially might be rigid.}
\label{F:Delta_changes}
\end{figure}

Here $\Delta_1, \Delta_2$ are distinct polydiagonals.
The periodic orbit $\X =\{x(t)\}$ lies in $\Delta_1$ for some intervals of time $t$,
but in $\Delta_2$ for another interval. The transitions occur smoothly via intervals of time 
in which $x(t) \in \Delta_1 \cap \Delta_2$. Considerably more complicated changes in
the local synchrony pattern are possible; for example it might change infinitely many times.
If this or anything similar occurs, the  the polydiagonal $\Delta_\X$
for the global synchrony pattern $\bowtie_\X$ of $\X$ 
is different from both $\Delta_1$ and $\Delta_2$. Indeed,
\[
\Delta_1 \subsetneq \Delta_\X \qquad \Delta_2 \subsetneq \Delta_\X
\]
and no point or small interval $J$ on $\X$ has synchrony pattern $\bowtie_J\, =\, \bowtie_\X$,
the global synchrony pattern of $\X$.
We cannot infer local rigidity directly from global rigidity, because
perturbed orbits need not remain locally on either $\Delta_1$ or 
$\Delta_2$. All we know is that they stay inside the strictly larger polydiagonal
$\Delta_\X$. 

\subsection{Local Rigid Synchrony and Generic Points}
\label{S:LRSGP}

If $\X$ is as in Figure~\ref{F:Delta_changes}, there is no generic point for
the global synchrony pattern, because no point on the orbit 
lies in $\Delta_\X \setminus (\Delta_1 \cup \Delta_2)$.
However, there exist generic points for the local synchrony patterns.

Such behaviour is easily perturbed away in a general dynamical
system, but it it not clear whether this can always be done
for network dynamics. 
We get round this issue by considering a local version
of rigid synchrony. This property is local in time, but still requires a global condition:
existence of a hyperbolic periodic orbit. A local version of rigidity was introduced
by Golubitsky \etal~\cite[Section 2]{GRW10} for similar reasons;
see in particular their Definition 2.9(c).

The natural definition of the global pattern of rigid synchrony $\bowtie_\X$ on $\X$ is
to take $J = \R$ in~\eqref{E:bowtieJ}; which leads to
\[
c \bowtie_\X d \iff \tilde{x}_c(t) \equiv \tilde{x}_d(t) \quad \forall t \in \R
\]
where $\tilde{x}(t)$ is any perturbed periodic orbit for a sufficiently small
admissible perturbation. This colouring is the
same as $\approx^{\mathrm{rig}}$ in~\cite{SP07}.
 However, because of the possible non-existence
of generic points, we work locally:

\begin{definition}\em
\label{D:LRSP}
A colouring $\bowtie$ is a {\em locally rigid synchrony pattern} 
or {\em rigid local synchrony pattern} for $\X$
at $x(t_0)$ if and only if there exists $\eps > 0$ and $t^* > 0$ such that:

\noindent {\rm (a)}
$\X$ remains hyperbolic after a perturbation $p$ with $\|p\|_1 < \eps$.

\noindent {\rm (b)} The condition
\[
\tilde{x}^p_c(t) = \tilde{x}^p_d(t) \iff c \bowtie d
\]
is valid for all perturbations $p$ with $\|p\|_1 < \eps$ and all $t$ such that
$|t-t_0| < t^\ast$.
\qed\end{definition}

That is, the synchrony pattern for $\tilde{x}^p(t)$ is the same for all
small enough $p$ and for all $t$ close enough to $t_0$. 
We define this to be {\em the} local synchrony pattern of $\X$
near $t_0$, and denote it by
\[
\bowtie_{x(t_0)}
\]

\section{Construction of Admissible Perturbations}
\label{S:CAP}

We now set up a general method for defining admissible perturbations
with arbitrarily small support and arbitrarily small $C^1$ norm. We
describe the method for a general network $\GG$, but apply
it below to quasi-quotients $\GG^\RR$.

\subsection{Symmetrisation}
\label{S:symmetrisation}

We will use Proposition~\ref{P:char_admiss} to construct admissible pertubations,
supported on (usually small) sets, by defining suitable bump functions 
 and then symmetrising, as explained below
in Definition~\ref{D:symmetrisation}. Let
\beqn
g_c: P_c \times P_{T(c)} &\to& P_c \\
(x_c, x_{T(c)}) &\mapsto & g_c(x_c, x_{T(c)})
\eeqn
be a smooth admissible map. By~\eqref{E:beta_action} 
the vertex group $\Gamma = B(c,c)$ acts on such maps by
\[
\gamma g_c(x_c, x_{T(c)}) = g_c(x_c, \gamma^* x_{T(c)})
\]
where $\gamma^*$ is the pullback of $\gamma$. 

\begin{definition}\em
\label{D:symmetrisation}
The {\em symmetrisation} of $g_c$ is $g^\Gamma_c$, where
\[
g_c^\Gamma(x_c, x_{T(c)}) = \frac{1}{|\Gamma|}\sum_{\gamma \in \Gamma} (x_c, \gamma^*x_{T(c)})
\]
\qed\end{definition}
Symmetrisation on each input class, combined with standard-order identifications,
yields the groupoid symmetrisation construction of~\cite{SP07}
up to constant factors.
The normalisation factor $1/|\Gamma|$ ensures that
\begin{equation}
\label{E:symm_norm}
\| g_c^\Gamma \|_1 \leq \|g_c\|_1
\end{equation}
bearing in mind that $\|g_c^\gamma\|_1 = \|g_c\|_1$ for all $c \in \CC$
because $\Gamma$ acts by permuting coordinates. 
Clearly $g_c^\Gamma$ is $\Gamma$-invariant, that is, $B(c,c)$-invariant.
Moreover,
\begin{equation}
\label{E:supp_sym}
\supp(g_c^\Gamma) = \bigcup_{\gamma \in \Gamma} \gamma\; \supp(g_c)
\end{equation}
where $\supp$ is the support:
\[
\supp (h) = \mathrm{cl}\{x : h(x) \neq 0 \}
\]

In this paper we apply symmetrisation to components of
admissible maps for quasi-quotients $\GG^\RR$. 
The following simple lemma lets us perform symmetrisations on
$P^\RR$ and lift them to symmetrisations on $P$:

\begin{lemma}
\label{L:betastar}
With the above notation,
\[
[\beta^* x_{T(c)}] = \beta^* x_{[T(c)]} 
\]
\end{lemma}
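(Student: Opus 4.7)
The plan is a direct unpacking of both sides using the definitions already in place, with no estimates or nontrivial construction. The entire content of the lemma is that $\beta$ acts by permuting \emph{arrow} labels while the bracket $[\,\cdot\,]$ acts on \emph{node} labels, so the two operations touch different pieces of the index data and commute. In particular, the main work has already been done in Proposition~\ref{P:RRinput-type}, which tells us that $\beta$ transports naturally between $\GG$ and $\GG^\RR$.

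Concretely, I would write $I(c) = (i_1, \ldots, i_\nu)$ and expand the left-hand side using Definition~\ref{D:admiss}:
\[
[\beta^* x_{T(c)}] = \bigl[(x_{\tl(\beta(i_1))}, \ldots, x_{\tl(\beta(i_\nu))})\bigr] = (x_{[\tl(\beta(i_1))]}, \ldots, x_{[\tl(\beta(i_\nu))]}),
\]
where the last equality is the componentwise extension of Definition~\ref{D:bracket}, in the spirit of~\eqref{E:[c1...]}, applied to state variables via $[x_e] := x_{[e]}$.

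For the right-hand side, observe that $[T(c)] = ([\tl(i_1)], \ldots, [\tl(i_\nu)])$ is, by Definition~\ref{D:QQ}, precisely the tail tuple of $c$'s input arrows interpreted inside the quasi-quotient $\GG^\RR$; that is, $\tl^\RR(i_j) = [\tl(i_j)]$. By Proposition~\ref{P:RRinput-type}(a), the arrow bijection $e \mapsto e'$ identifies $\beta$ with an input isomorphism of $\GG^\RR$ having the same underlying permutation of the $i_j$. Applying the pullback formula of Definition~\ref{D:admiss} inside $\GG^\RR$ therefore gives
\[
\beta^* x_{[T(c)]} = (x_{\tl^\RR(\beta(i_1))}, \ldots, x_{\tl^\RR(\beta(i_\nu))}) = (x_{[\tl(\beta(i_1))]}, \ldots, x_{[\tl(\beta(i_\nu))]}),
\]
which coincides with the expression computed for the left-hand side.

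There is no real obstacle; the only point requiring any care is to justify the naturality step that lets us read the pullback formula both in $\GG$ and in $\GG^\RR$ with the same $\beta$, and that is exactly the content of Proposition~\ref{P:RRinput-type}(a). Everything else is a matter of notation.
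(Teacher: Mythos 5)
Your proof is correct and essentially coincides with the paper's: both sides are unpacked via the pullback formula of Definition~\ref{D:admiss}, and the identity reduces to the observation that the arrow permutation induced by $\beta$ acts on coordinate positions while $[\,\cdot\,]$ acts on the node labels inside those positions, so the two operations commute. The only stylistic difference is that you invoke Proposition~\ref{P:RRinput-type} to transport $\beta$ to $\GG^\RR$ before reading the pullback there, whereas the paper treats $\beta^*$ simply as a permutation of positions and records the commutation directly, with no reference to the quasi-quotient.
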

\begin{proof}
Permuting entries, and replacing $c$ by $[c]$, commute. In more detail, let
$T(c) = (i_1, \ldots, i_k)$, so that $[T(c)] = ([i_1], \ldots, [i_k])$. Then
\beqn
[\beta^* x_{T(c)}] &=& (x_{[\beta(i_1)]}, \ldots,  x_{[\beta(i_k)]}) \\
	&=& (x_{\beta([i_1]}, \ldots, x_{\beta([i_k])}) \\
	&=& \beta^*(x_{[i_1]}, \ldots, x_{[i_k]}) \\
	&=& \beta^* x_{[T(c)]}
\eeqn
\qed\end{proof}

\subsection{Bump Functions}
\label{S:BF}

We use bump functions
to construct admissible $C^\infty$ maps $p:P \to P$ with a specified
compact support, taking a specific locally constant value $\mathbf{w}$ 
on a neighbourhood of a specified point $z \in P$. We denote the open ball
in the Euclidean norm of radius $r$ centre $x$  by $B_r(x)$.

\begin{proposition}
\label{P:network_bump}
Let $z \in P_c \times P_{T(c)}$ and suppose that $\delta > 0$. Let $\mathbf{w} \in P_c$.
Then there exists a $C^\infty$ map $\Psi_c:P_c \times P_{T(c)} \to P_c$ with compact support,
hence $C^1$-bounded, such that 
\beqn
\Psi_c(y) &=& \mathbf{w}\ \ \ \mbox{\rm if}\ \|y-z\| \leq \delta\\
\Psi_c(y) &=& 0\ \ \ \mbox{\rm if}\ \|y-z\| \geq 2 \delta 
\eeqn
\end{proposition}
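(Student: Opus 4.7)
The plan is to construct $\Psi_c$ directly from a standard one-dimensional $C^\infty$ bump function, composed with the smooth function $y \mapsto \|y-z\|^2$, and then multiplied by the constant vector $\mathbf{w}$. Since $P_c \times P_{T(c)}$ is a finite-dimensional Euclidean space, this is routine real analysis and the machinery of Abraham \etal~\cite{AMR83} applies directly.

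First, I would recall the standard construction: let $\psi(t) = e^{-1/t}$ for $t > 0$ and $\psi(t) = 0$ for $t \leq 0$. Then $\psi \in C^\infty(\R)$, and
\[
\phi(t) = \frac{\psi(t)}{\psi(t) + \psi(1-t)}
\]
is a $C^\infty$ function with $\phi(t) = 0$ for $t \leq 0$, $\phi(t) = 1$ for $t \geq 1$, and $0 \leq \phi \leq 1$ everywhere.

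Next, I would set
\[
\Psi_c(y) = \left(1 - \phi\!\left(\frac{\|y-z\|^2 - \delta^2}{3\delta^2}\right)\right)\mathbf{w}
\]
The function $y \mapsto \|y-z\|^2$ is a smooth polynomial in the coordinates of $y$, so $\Psi_c$ is $C^\infty$. When $\|y-z\| \leq \delta$, the argument of $\phi$ is $\leq 0$, giving $\Psi_c(y) = \mathbf{w}$; when $\|y-z\| \geq 2\delta$, the argument is $\geq 1$, giving $\Psi_c(y) = 0$. Hence $\supp(\Psi_c) \subseteq \overline{B_{2\delta}(z)}$, which is compact.

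Finally, because $\Psi_c$ vanishes outside a compact set and is $C^\infty$, both $\Psi_c$ and $\mathrm{D}\Psi_c$ are bounded on $P_c \times P_{T(c)}$, so $\Psi_c$ is $C^1$-bounded in the sense of Section~\ref{S:C1BM}. There is no real obstacle here; the construction is classical and all the subtleties (smoothness at the transition radii, boundedness of derivatives) are handled automatically by the choice of $\phi$ and the use of $\|y-z\|^2$ in place of $\|y-z\|$ to avoid non-smoothness at $y = z$.
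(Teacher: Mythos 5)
Your proposal is correct and follows essentially the same route as the paper: a one-dimensional $C^\infty$ cutoff composed with the smooth map $y \mapsto \|y-z\|^2$ (to avoid the non-smoothness of the norm at $y=z$), multiplied by the constant vector $\mathbf{w}$; the paper simply cites Abraham \emph{et al.} for the existence of the one-dimensional bump where you construct it explicitly from $e^{-1/t}$. Your explicit affine rescaling of the argument even handles the radii $\delta$ and $2\delta$ slightly more cleanly than the paper's $\psi(x/\delta)$.
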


\begin{proof}
Let $x \in P_c \times P_{T(c)}$ and let
$U = B_r(x), V = B_s(x)$ where $r < s$. By Lemma 4.2.13 of Abraham \etal~\cite{AMR83}
there is a $C^\infty$
function $\phi: \R^k \to \R$ such that $\phi(x) = 1$ when $x \in U$ and
$\phi(x) = 0$ when $x \not \in V$.
In particular, there exists a $C^\infty$ bump function $\psi:\R \to \R$ such that
\[
\psi(x) = \left\{ \begin{array}{rcl} 1 & \mbox{if} & x \in (-1,1) \\
	0 & \mbox{if} & x \not\in (-2,2) \end{array}\right. 
\]
This has compact support.
For any $\delta > 0$ let $\phi^\delta:\R^+ \to [0, \delta]$ be defined by
\begin{equation}
\label{E:phi_delta}
\phi^\delta(x) = \psi(x/\delta)
\end{equation}
Then if $\|\cdot\|_E$ is the Euclidean norm, the function
\begin{equation}
\label{E:Phi_bump}
\Phi(y) = \phi^\delta(\|y-z\|_E^2) \mathbf{w}
\end{equation}
is smooth, supported on the compact set $\overline{B_{2\delta}(z)}$, 
and satisfies $\Phi(y) = \mathbf{w}$ if $\|y-z\|\leq \delta$.
\qed\end{proof}

\subsection{Symmetrised Bump Functions}

We construct admissible perturbations using symmetrised bump functions.
In a general setting, let $\Gamma$ be a finite group acting linearly and orthogonally
on a real vector space $V$.
Denote the $\Gamma$-orbit of $A \subseteq V$ by 
\[
\OO(A) = \{ \gamma.a : \gamma \in \Gamma, a \in A\}
\]
For finite subsets $Y,Z \subseteq V$ write the (Hausdorff) distance as
\[
d(A,B) = \min\{\|y-z\| : y \in Y, z \in Z \}
\]
We state the following lemma for a general finite group action. It will be
applied when the group is a vertex group.
\begin{lemma}
\label{L:bump_sym}
Let $V,W$ be real vector spaces, and let a finite group $\Gamma$ act on $V$.
Let $A,B \subseteq V$ be finite sets with disjoint group orbits:
\[
\OO(A) \cap \OO(B) = \emptyset
\]
Let $0 \neq \mathbf{w} \in W$. Then there exists $\delta > 0$ 
and a $\Gamma$-invariant map $h:V \to W$ with compact support 
such that
\beqn
h(y) &=& 0\ \mbox{if}\ y \in \OO(A) \\
h(y) &=& \mathbf{w} \ \mbox{if}\ d(y,\OO(B)) \leq \delta \\
h(y) &=& 0 \ \mbox{if}\ d(y,\OO(B)) \geq 2\delta 
\eeqn
\end{lemma}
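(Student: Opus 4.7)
The plan is to build $h$ as a finite sum of scalar-valued bump functions centred at the points of $\OO(B)$, times the fixed vector $\mathbf{w}$. Since $\Gamma$ is finite and $B$ is finite, the orbit $\OO(B)$ is a finite set, and similarly for $\OO(A)$. Because $\OO(A)$ and $\OO(B)$ are disjoint finite subsets of the metric space $V$, there is a positive separation between them, so I can choose $\delta>0$ small enough to satisfy two simultaneous conditions:
\begin{equation*}
d(\OO(A),\OO(B))>2\delta\qquad\text{and}\qquad \min_{b_1\ne b_2 \in \OO(B)}\|b_1-b_2\|>4\delta.
\end{equation*}
The second inequality ensures that the closed $2\delta$-balls around distinct points of $\OO(B)$ are pairwise disjoint, and the first ensures all such balls avoid $\OO(A)$.

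Next I would construct a single scalar bump. Using the bump $\phi^\delta$ from \eqref{E:phi_delta} (so $\phi^\delta(s)=1$ for $s\le\delta$ and $\phi^\delta(s)=0$ for $s\ge 2\delta$), I define, for each $b\in\OO(B)$,
\begin{equation*}
\varphi_b(y)\;=\;\phi^\delta\bigl(\|y-b\|^2\bigr)\,\mathbf{w},
\end{equation*}
modelled on \eqref{E:Phi_bump}. This is a $C^\infty$ map $V\to W$, compactly supported in the closed $2\delta$-ball about $b$, identically equal to $\mathbf{w}$ on the closed $\delta$-ball about $b$, and identically zero outside. I then set
\begin{equation*}
h(y)\;=\;\sum_{b\in\OO(B)}\varphi_b(y).
\end{equation*}
The disjointness of the $2\delta$-balls guarantees that at any $y\in V$ at most one summand is nonzero, so the required pointwise values follow immediately: $h(y)=\mathbf{w}$ when $d(y,\OO(B))\le\delta$, $h(y)=0$ when $d(y,\OO(B))\ge 2\delta$, and $h(y)=0$ on $\OO(A)$ since $\OO(A)$ is further than $2\delta$ from every $b\in\OO(B)$. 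Compact support is automatic from the finite sum of compactly supported terms.

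It remains to verify $\Gamma$-invariance. Because $\Gamma$ acts orthogonally, $\|\gamma y-b\|=\|y-\gamma^{-1}b\|$, so $\varphi_b(\gamma y)=\varphi_{\gamma^{-1}b}(y)$. Reindexing the sum via the bijection $b\mapsto\gamma^{-1}b$ of $\OO(B)$ onto itself yields $h(\gamma y)=h(y)$. The one subtle point worth flagging is that the construction relies on $\Gamma$ preserving the Euclidean norm used to define the bumps; since the lemma is to be applied with $\Gamma$ a vertex group $B(c,c)$ acting by permuting coordinate blocks of $P_{T(c)}$, this orthogonality is automatic in the setting of the paper, and no additional averaging is required. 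The only place care is needed is in the choice of $\delta$ at the outset: this is the main (and essentially only) obstacle, but it is resolved by the finiteness of $\OO(A)\cup\OO(B)$ together with their disjointness.
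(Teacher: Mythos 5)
Your proof is correct and follows the same basic idea as the paper's --- a sum of compactly supported bumps localized around the orbit $\OO(B)$, made small enough to avoid $\OO(A)$. The one real difference is how $\Gamma$-invariance is achieved: the paper constructs a bump $\Phi$ (via Proposition~\ref{P:network_bump}) supported near $\OO(B)$ and then applies the symmetrisation operator $\Phi \mapsto \Phi^\Gamma$ from Section~\ref{S:symmetrisation}, whereas you sum $\varphi_b$ over the entire orbit $\OO(B)$ and verify invariance directly by the reindexing $b \mapsto \gamma^{-1}b$, so no averaging step is needed. Your version is a little more careful in two respects. First, you impose the extra separation condition $\min_{b_1 \neq b_2 \in \OO(B)} \|b_1 - b_2\| > 4\delta$, which guarantees that the $2\delta$-balls around distinct orbit points are pairwise disjoint and hence that the sum is literally $\mathbf{w}$ (not a multiple of it) on the $\delta$-neighbourhood of $\OO(B)$; the paper only shrinks $\delta$ to separate $\OO(A)$ from $\OO(B)$, leaving the intra-orbit separation implicit. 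Second, you flag that the argument uses orthogonality of the $\Gamma$-action, which is a standing hypothesis in the surrounding text (``acting linearly and orthogonally'') though not repeated in the lemma statement. Neither point represents a gap in the paper --- both constructions yield essentially the same $h$ --- but your write-up makes the disjointness bookkeeping explicit rather than relying on the symmetrisation machinery.
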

\begin{proof}
Since the group orbits $\OO(A)$ and $\OO(B)$ are disjoint,
 $d(\OO(A),\OO(B)) >0$. Let
\[
\delta < \shf d(\OO(A),\OO(B))
\]
so that $\delta$-neighbourhoods of $\OO(A)$ and $\OO(B)$ are disjoint.
By Proposition \ref{P:network_bump} there is a bump function $\Phi:V \to W$ such that
\beqn
\Phi(a) &=& 0\ \mbox{if}\ a \in A \\
\Phi(y) &=& \mathbf{w} \ \mbox{if}\ d(y, \OO(B)) \leq \delta \\
\Phi(y) &=& 0 \ \mbox{if}\ d(y,\OO(B)) > 2 \delta 
\eeqn
Symmetrise, to obtain $h = \Phi^\Gamma$. Then $h$ has compact
support by definition, and the stated
properties follow easily from Section~\ref{S:symmetrisation}.
\qed\end{proof}

\section{Proof of Local Rigid Synchrony Property\\
for Strongly Hyperbolic Periodic Orbits}
\label{S:LRSC}

We now adapt the method of Section~\ref{S:3NE} to prove the 
Local Rigid Synchrony Property for any network, under the extra
condition that the periodic orbit is strongly hyperbolic (indeed, `stably isolated' suffices).

If necessary, an arbitrarily small initial perturbation ensures that if $\RR$ is a
set of representatives for local synchrony colouring
then the induced periodic orbit $\X^\RR$ is hyperbolic in $P^\RR$,
hence is an isolated periodic orbit in $P^\RR$. The interval $J$ and the
size $\eps$ of a sufficiently small perturbation may have to be made smaller.
Without loss of generality, we assume such a `pre-perturbation' has been made, and
retain the same notation for $f, \X$, and so on. 

Recall that we consider a network $\GG$ with nodes $\CC = \{1,2, \ldots, n\}$,
with state spaces $P_c$ for nodes $c$ be $\R^{{k_c}}$, so that the total state space is
$P = P_1 \times \cdots \times P_n$.
Consider a $\GG$-admissible ODE \eqref{E:admiss_ODE}, 
written in coordinates as
\begin{equation}
\label{E:admissODE}
\dot{x}_c = f_c(x_c, x_{T(c)}) \quad c \in \CC
\end{equation}
Let \eqref{E:admiss_ODE_pert} be a family
of small admissible perturbations. Let $\X$ be a strongly hyperbolic periodic
orbit of \eqref{E:admiss_ODE}, with perturbed periodic orbit $\X^\eps$
for \eqref{E:admiss_ODE_pert}. Assume that at $t_0 \in \R$ the local
synchrony pattern $\bowtie_{x(t_0)}$ is rigid. 

We use Proposition~\ref{P:char_admiss} to define admissible perturbations $p$.
This implies that we need define $p_c$ only for one node $c$ from each 
input equivalence class, provided we ensure that $p_c$ is $B(c,c)$-invariant.

\begin{remark}\em
\label{r:identifications}
When $c \sim_I d$, Proposition~\ref{P:char_admiss} lets us identify $P_c$ with $P_d$
and $P_{T(c)}$ with $P_{T(d)}$. It also lets us identify
the actions of $B(c,c)$ and $B(d,d)$,
so $B(c,c)$ acts on $P_{T(d)}$ in the same way as it acts on $P_{T(c)}$.

We make these identifications without further comment from now on.
\qed\end{remark}

\subsection{Statement of Main Theorem}
\label{S:SP}

As in~\cite[Sections 2 and 6]{GRW10}, the issues raised in Section~\ref{S:CLS} 
make it necessary to prove a stronger local version of the Rigid Synchrony Property,
namely:

\begin{theorem}
\label{T:localRSC}
If a strongly hyperbolic periodic orbit of a $\GG$-admissible ODE has a
locally rigid synchrony pattern on a non-empty open interval of time, then that pattern is balanced.
\end{theorem}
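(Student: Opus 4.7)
The plan is to argue by contradiction: suppose $\bowtie := \bowtie_{x(t_0)}$ is locally rigid but not balanced, fix a set of representatives $\RR$, and exploit the formal mismatch between the induced ODE on the quasi-quotient $\GG^\RR$ and one of its constraint equations to construct an admissible perturbation that freezes $\X$ in place while still injecting a nonzero term into the constraint. I would first invoke strong hyperbolicity to pre-perturb $f$ by an arbitrarily small admissible perturbation so that $\X^\RR$ becomes hyperbolic on $P^\RR$, and shrink $J$ if necessary so that the local synchrony pattern is constant on $J$. Pick a time $t^* \in J$; by Proposition~\ref{P:separation} the point $x^* := x(t^*)$ is a \emph{generic point}, i.e.\ the representative values $\{x^*_r : r \in \RR\}$ are pairwise distinct.

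Unbalancedness provides $c \in \CC$ with $r := [c] \in \RR$, $c \bowtie r$, and no colour-preserving input isomorphism $I(c) \to I(r)$. (The sub-case $c \not\sim_I r$ is easier, since then $p_c$ and $p_r$ sit in independent input-equivalence classes and can be chosen independently; I focus on $c \sim_I r$.) Working in standard order and using Remark~\ref{r:identifications} to identify $B(c,c)$ with $B(r,r)$ and $P_{T(c)}$ with $P_{T(r)}$, unbalancedness says exactly that the labelled tuples $[T(r)]$ and $[T(c)]$ lie in distinct $B(r,r)$-orbits; genericity of $x^*$ then implies that the value tuples $x^*_{[T(r)]}$ and $x^*_{[T(c)]}$ lie in distinct $B(r,r)$-orbits in $P_{T(r)}$. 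I would apply Lemma~\ref{L:bump_sym} to construct a $B(r,r)$-invariant $C^\infty$ bump $\Psi_r \colon P_r \times P_{T(r)} \to P_r$ of arbitrarily small compact support that (i) vanishes on a neighbourhood of the $B(r,r)$-orbit of $(x^*_r, x^*_{[T(r)]})$, (ii) takes a prescribed value $\mathbf{w} \neq 0$ at $(x^*_r, x^*_{[T(c)]})$, and (iii) vanishes automatically near $(x^*_{r'}, \cdot)$ for every $r' \in \RR$ with $r' \neq r$, which holds once the support radius is smaller than $\min_{r' \neq r}\|x^*_{r'} - x^*_r\|$ because $B(r,r)$ acts trivially on the first coordinate. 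Proposition~\ref{P:char_admiss} then propagates $\Psi_r$ across the input-equivalence class of $r$, with zero on all other classes, producing a $\GG$-admissible perturbation $p$ on $P$ with $\|p\|_1$ as small as desired.

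For the family $\dot x = f + \eps p$, by construction $p$ vanishes on an open neighbourhood $N$ of $x^*$ in $P$, so the induced vector field $(f + \eps p)^\RR$ agrees with $f^\RR$ on the projection $N^\RR$. Hyperbolicity of $\X^\RR$ on $P^\RR$ gives a locally unique perturbed induced periodic orbit, which by Lemma~\ref{L:canonical_ppo} is the projection $(\X^\eps)^\RR$ of the unique perturbed orbit $\X^\eps$ of $f + \eps p$. Since both $\X^\RR$ and $(\X^\eps)^\RR$ satisfy the \emph{same} ODE on $N^\RR$ and are Hausdorff-close, Picard uniqueness for ODEs forces $(\X^\eps)^\RR(t) = \X^\RR(t)$ on an open sub-interval of $J$ around $t^*$. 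Local rigidity of $\bowtie$ then lifts this equality to $\X^\eps(t) = \X(t)$ on the same sub-interval; in particular $x^\eps(t^*) = x^*$ and $\dot x^\eps(t^*) = \dot x(t^*)$.

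The contradiction drops out of the $c$-component of the perturbed ODE. Substituting $x^\eps(t^*) = x^*$ and using $x^*_c = x^*_r$ together with $x^*_{T(c)} = x^*_{[T(c)]}$ gives
\[
\dot x_r(t^*) \;=\; f_c(x^*_r,\, x^*_{[T(c)]}) \;+\; \eps\, p_c(x^*_r,\, x^*_{[T(c)]}),
\]
whereas the unperturbed constraint \eqref{E:Delta_constraints} forces $\dot x_r(t^*) = f_c(x^*_r, x^*_{[T(c)]})$; hence $\eps\, p_c(x^*_r, x^*_{[T(c)]}) = 0$. But by construction $p_c(x^*_r, x^*_{[T(c)]}) = \mathbf{w} \neq 0$, contradiction. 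I expect the hard part to be the separation argument in the second paragraph: one must verify carefully that unbalancedness plus genericity really do separate the $B(r,r)$-orbits of the induced-ODE and constraint arguments, and that the symmetrised bump, once propagated across the input-equivalence class, does not inadvertently perturb the induced ODE at any other representative. This is precisely where strong hyperbolicity pays off, by making the phrase \emph{``unchanged induced orbit''} a meaningful and usable notion.
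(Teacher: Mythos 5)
Your strategy --- pre-perturb via strong hyperbolicity, pass to the quasi-quotient $\GG^\RR$, build a symmetrised bump that leaves the induced ODE on $P^\RR$ unchanged near the projected orbit while injecting a nonzero term into a conflicting constraint component, and conclude $\X^\eps = \X$ --- is exactly the paper's, and your separation argument (unbalancedness plus genericity of $x^*$ separates the vertex-group orbits, and first-coordinate separation covers the other representatives) is also the right one. But two of the intermediate steps are misstated in ways that matter.

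The claim that ``$p$ vanishes on an open neighbourhood $N$ of $x^*$ in $P$'' contradicts your own construction. You arrange $p_c(x^*_r, x^*_{[T(c)]}) = \mathbf{w} \neq 0$; but since $x^* \in \Delta_{\bowtie}$ we have $x^*_c = x^*_r$ and $x^*_{T(c)} = x^*_{[T(c)]}$, so $p_c(x^*_r, x^*_{[T(c)]})$ is precisely the $c$-component of $p$ evaluated at $x^*$, whence $p(x^*) \neq 0$. What vanishes near the projection of $x^*$ is the \emph{induced} perturbation $p^\RR$ (as in Theorem~\ref{T:QQadmiss}(a)): the constraint component $p_c$ does not appear in $p^\RR$ because $c \notin \RR$. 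This distinction is the crux of the method --- the ambient field \emph{does} change on $\X$, but its projection to $P^\RR$ does not --- and collapsing it obscures why the proof works.

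Second, Picard uniqueness is the wrong tool: two Hausdorff-close periodic orbits of the same ODE need not intersect, and Picard gives uniqueness only from a \emph{common} initial point. The correct argument is local uniqueness of hyperbolic periodic orbits. One must shrink the support so that $p^\RR$ vanishes on a neighbourhood of the \emph{entire} projected orbit $\X^\RR$, not merely near the projection of $x^*$ (this requirement is implicit in your write-up and should be made explicit); then $\X^\RR$ is still a periodic orbit of $(f+\eps p)^\RR$, hyperbolicity makes it the locally unique one, and Lemma~\ref{L:canonical_ppo} identifies it with $(\X^\eps)^\RR$, forcing $(\X^\eps)^\RR = \X^\RR$ and hence $\X^\eps = \X$ by Theorem~\ref{T:induced_ODE}(c). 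One further small gap: the case $c \not\sim_I r$ is not as ``independent'' as you claim --- $c$ may still be input equivalent to a \emph{different} representative $r' \in \RR\setminus\{r\}$, which ties $p_c$ to $p_{r'}$. The paper's three-way split in Lemma~\ref{L:p_exists} exists precisely to handle this; first-coordinate separation again disposes of it, but it should not be waved away as trivial.
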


The proof occupies the remainder of this section.

For any subset $J \subseteq \R$ write $x(J) = \{x(t) : t \in J \}$.
In the sequel $J$ is either a non-empty open interval or a point.
Local rigidity of $\X$ implies that there exists
a non-empty open interval $J$ such that the synchrony pattern $\bowtie_{x(t)}$
of any point $x(t) \in x(J)$ is the same for all $t \in J$, and is rigid. 
By Definition~\ref{D:LRSP},
this pattern includes {\em all} synchrony relations that hold on $x(J)$.
By local rigidity,
it is also the same colouring as that of $x^\eps(t)$ for $\eps \ll1, t \in J$.
(Here and elsewhere we may have to make $J$ or $\eps$ smaller for such
statements to be valid: we retain the same notation.)

Choose $t_0 \in J$, giving a point $x(t_0) \in \X$.  As in Section~\ref{S:CSP}, we
choose a Poincar\'e section $\Sigma$ transverse to $\X$ at $x(t_0)$.
Any perturbed periodic orbit meets $\Sigma$ transversely in a unique point, provided the
perturbation is sufficiently $C^1$-small. We use this intersection point to fix
initial conditions on perturbed periodic orbits $\tilde \X$ 
by requiring $\tilde x(t_0) \in \Sigma$. As just remarked,
$\bowtie_{x(t_0)} =\, \bowtie_J$,
and local rigidity similarly implies that $\bowtie_{\tilde x(t_0)} =\, \bowtie_{x(t_0)}$
for $\tilde x(t_0) \in \tilde\X$.

For a contradiction, assume that
\begin{equation}
\label{E:not_bal}
\bowtie_{x(t_0)} \mbox{is not balanced}
\end{equation}
The rest of this section proves that this cannot happen.

To simplify notation we write
\[
\bowtie_{x(t_0)} =\, \bz
\]
from now on.

\subsection{Induced OODE}

The first step is to replace the unperturbed ODE \eqref{E:admissODE}
by the corresponding induced overdetermined ODE (or induced OODE) 
for the synchrony pattern with colouring $\bz$.
(We get an OODE because $\bz$ is not balanced.) 
Choose some set of representatives $\RR$ for the colours, and renumber nodes
so that these are nodes $1,2, \ldots, m$. Denote the representative
of node $c$ in $\RR$ by $[c]$ as in Definition~\ref{D:bracket}. Then
$[c] = c$ when $c \in \RR$, that is, for $1 \leq c \leq m$.

The set $\RR$ defines coordinates $(u_1, \ldots, u_m)$ on the polydiagonal
$\Delta_{\bz} \subseteq P$ by setting 
\[
x_c = u_{[c]}
\]
For example, if $|\CC|=5$ and the synchrony pattern is the partition $\{\{1,2,4\}, \{3,5\}\}$,
then $(u_1,u_2)$ is identified with $(u_1,u_1,u_2,u_1,u_2) \in P$.
Now $u_r$ acts as the node coordinate for node $r$ in $\GG^\RR$
when $1 \leq r \leq m$, and canonically identifies $P^\RR$ with $\Delta_{\bz}$.

By Proposition~\ref{P:separation}, local rigidity of $\bz$ implies:
\begin{equation}
\label{E:points_distinct}
\mbox{the points}\ u^0_c(t_0)\ \mbox{are distinct for}\  1 \leq c \leq m
\end{equation}

Write input variables in standard order so that 
transitional pullback maps between distinct input equivalent
nodes can be taken to be the identity. By Proposition~\ref{P:char_admiss},
the only further requirement on the $c$-component for admissibility is
$B(c,c)$-invariance for each representative $c$ of the input equivalence classes. 
Moreover, we can identify
$B(d,d)$ with $B(c,c)$ whenever $c\sim_I d$, by Remark~\ref{r:identifications}.

We have identified $\Delta_{\bz} \subseteq P$ with $P^\RR$,
with coordinates $u_1 \ldots u_m$.
We now convert~\eqref{E:admissODE} 
into an OODE for by substituting
$u_{[c]}$ for $x_c$, for $1 \leq c \leq n$, as in Definition~\ref{D:ind+con}:
\begin{equation}
\label{E:R_OODE}
\dot{u}_{[c]} = f_c(u_{[c])}, u_{[T(c)]}) \quad 1 \leq c \leq n
\end{equation}
Recall that $[T(c)]$ is the tuple of colours of tail nodes of input arrows $I(c)$. That is, 
\[
[(i_1, \ldots, i_r)] = ([i_1], \ldots, [i_r])
\]
 for the input tuple $(i_1, \ldots, i_r)$. (The subscript on $f_c$
 remains $c$, not $[c]$, since we are not changing the map, just substituting
 different variables into it.)
 A solution $\X$ of~\eqref{E:admissODE} 
satisfies the OODE~\eqref{E:R_OODE} if and only if
 $\X$ has synchrony pattern $\bz$. 
 Thus $\X$ determines a canonical
 periodic orbit $\X^\RR$ for the $\GG^\RR$-admissible induced ODE.
 
 For all $u \in \X^\RR$ and $c \in \{1, \ldots, m\}$ we have
 \[
 u_c = u_{[c]} \qquad
 u_{[T(c)]}(t) \in P_{[T(c)]}
 \]

Say that the colouring is {\em unbalanced at nodes $c,d$} if distinct nodes
$c, d$ have the same colour, but their input sets are not
colour-isomorphic. This is equivalent to:
\begin{eqnarray}
\label{e:input1} && \mbox{either}\ c\not\sim_I d,\ \mbox{or}\\
\label{e:input2} && c \sim_I d\ \mbox{and}\
x_{[T(c)]}\ \mbox{and}\ x_{[T(d)]}\ \mbox{lie in distinct orbits of}\ B(c,c).
\end{eqnarray}
Here we again use Remark~\ref{r:identifications}, so that
$B(c,c)=B(d,d)$ acts on {\em both} input tuples.

By~\eqref{E:not_bal}, $\bz$ is not balanced at some $c \neq d$.
Renumber the nodes so that $c=1, d=m+1$. Then:
 $\bz$ is unbalanced at nodes $1, m+1$, and
node 1 has the same colour as node $m+1$.

Split~\eqref{E:R_OODE} into the {\em induced OODE} for $\RR$, which is

\begin{equation}
\label{e:induced}
\dot{u}_c = f_c(u_c, u_{[T(c)]})\quad 1 \leq c \leq m
\end{equation}
together with the {\em conflicting equation}
\begin{equation}
\label{e:conflict}
\dot u_{m+1} = \dot{u}_1 = f_{m+1}(u_1, u_{[T(m+1)]}) 
\end{equation}
(bearing in mind that $[m+1]=1$), with possible further constraint equations 
\begin{equation}
\label{e:constraint}
\dot{u}_{[c]} = f_c(u_{[c]}, u_{[T(c)]})\quad m+2 \leq c \leq n
\end{equation}
Because $\bz$ is unbalanced at nodes $1,m+1$, the two equations
\beqn
\dot{u}_1 &=& f_1(u_1, u_{[T(1)]}) \\
\dot{u}_1 &=& f_{m+1}(u_1, u_{[T(m+1)]}) 
\eeqn
conflict, in the sense of~\eqref{e:input1} or~\eqref{e:input2}: 
either nodes 1 and $m+1$ are not input equivalent, so that $f_{m+1} \neq f_1$,
or they are input equivalent, so that $f_{m+1} = f_1$, but $u_{[T(m+1)]}$
and $u_{[T(1)]}$ do not lie in the same $B(1,1)$-orbit, using Lemma~\ref{L:betastar}
to define the group action. (The same result holds if we replace $x$ by $u$ throughout.)
We will use this
conflict to deduce a contradiction. The remaining constraints~\eqref{e:constraint}
play no further role in the proof and are ignored. 
The various $f_c$ may or may not be equal, and we distinguish these cases in the proof.
Also, some input equivalence classes may not be represented
among nodes $\{1, \ldots, m\}$.

\subsection{Perturbations}

It is convenient to write 
\[
f^\RR = (f_1, \ldots, f_m) 
\]
for the map that appears in the induced ODE, to distinguish it from $f$.
Now $f:P \to P$ and $f^\RR:P^\RR \to P^\RR$. We use similar notation
to distinguish a perturbation $p$ on $P$ from $p^\RR$ on $P^\RR$.

Theorem~\ref{T:QQadmiss} implies that we can construct $\GG$-admissible
perturbations of $f$ on $P$ by first constructing $\GG^\RR$-admissible
perturbations $p^\RR$ of $f^\RR$ on $P^\RR$ and then lifting to $P$. Moreover,
lifting can be performed in a manner that does not increase the $C^1$ norm of the perturbation.
We therefore work initially with $\GG^\RR$-admissible maps on $P^\RR$,
using the previously defined coordinates $u_1 \ldots, u_m$ on $P^\RR$.


Write the OODE for the perturbed map $f+\eps p$ as
\begin{equation}
\label{e:UOODE_induced}
\dot{u}^\eps_c = f_1(u^\eps_c, u^\eps_{[T(c)]}) 
	+\eps p_c(u^\eps_c, u^\eps_{[T(c)]}) \quad 1 \leq c \leq m 
\end{equation}
with conflicting equation
\begin{equation}
\label{e:UOODE_conflict}
\dot{u}^\eps_{[m+1]} = \dot{u}^\eps_1 = f_{m+1}(u^\eps_1, u^\eps_{[T(m+1)]}) 
	+\eps p_{m+1}(u^\eps_1, u^\eps_{[T(m+1)]})
\end{equation}
and possible further constraints
\begin{equation}
\label{e:UOODE_constraint}
\dot{u}^\eps_c = f_1(u^\eps_c, u^\eps_{[T(c)]}) 
	+\eps p_c(u^\eps_c, u^\eps_{[T(c)]}) \quad m+2 \leq c \leq n 
\end{equation}

Assume, for a contradiction, that
the OODE defined by \eqref{e:UOODE_induced}, \eqref{e:UOODE_conflict},
and \eqref{e:UOODE_constraint} has a solution for any admissible
$C^1$-bounded $p$. So although \eqref{e:conflict} conflicts 
with the first component of \eqref{e:induced}, they must agree on $\X$. 
Rigidity implies that the same statement holds for $\tilde\X$.
Agreement of this kind
 is possible for a single admissible map $f$, but it is highly non-generic. We show 
that in the strongly hyperbolic case, it cannot remain valid for all small perturbations.

To avoid confusion, but at the expense of slightly more complicated notation,
we write points on components of the induced unperturbed periodic orbit as 
$u_c^0(t)$ instead of $u_c(t)$. In particular, $u_c^0(t_0)$ is the $c$-component of the
unique point (locally) at which
the unperturbed periodic orbit meets the Poincar\'e section $\Sigma$.
This notation is chosen to be consistent with the notation $u_c^\eps(t)$ for the
corresponding perturbed periodic orbit for a perturbation $\eps p$.

\subsection{Proof Strategy}

The Local Rigid Synchrony Theorem~\ref{T:localRSC} is an immediate
consequence of the following two results,
Lemma~\ref{L:p=>unbal} and Lemma~\ref{L:p_exists}.
The proof of Lemma~\ref{L:p_exists} is deferred to Section~\ref{S:coP}.

\begin{lemma}
\label{L:p=>unbal}
Suppose there exists an admissible map $p$ for $\GG$
such that for $1 \leq c \leq m$,
\begin{equation}
\label{E:allvanish}
p_c(x_c^\eps(t), x_{[T(c)]}^\eps(t)) = 0 \quad 1 \leq c \leq m,\ \eps \ll 1, |t-t_0| \ll 1
\end{equation}
and
\begin{equation}
\label{E:conflict}
p_{m+1}(x^0_{m+1}(t_0),x^0_{[T(m+1)]}(t_0)) \neq 0
\end{equation}
Then $\bz$ is not locally rigid.
\end{lemma}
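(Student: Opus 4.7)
The plan is to assume, for a contradiction, that $\bz$ is locally rigid at $x(t_0)$, and show that the specific perturbation $p$ supplied by the hypothesis forces an inconsistency in the overdetermined system \eqref{e:UOODE_induced}--\eqref{e:UOODE_conflict}. Under rigidity, the perturbed periodic orbit $\X^\eps$ satisfies the synchrony relations of $\bz$ on a neighbourhood of $t_0$, so its coordinates collapse to a single trajectory $u^\eps(t)\in P^\RR$ with $x^\eps_c(t)=u^\eps_{[c]}(t)$. The hypothesis~\eqref{E:allvanish} then rewrites as $p_c(u^\eps_c(t),u^\eps_{[T(c)]}(t))=0$ for $1\leq c\leq m$ and $t$ near $t_0$, so along $u^\eps$ the induced perturbed ODE \eqref{e:UOODE_induced} reduces to the unperturbed induced ODE $\dot u_c=f_c(u_c,u_{[T(c)]})$.

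The key step is to deduce $u^\eps(t)=u^0(t)$ on a neighbourhood of $t_0$. This is where strong hyperbolicity enters. After the pre-preparation described at the start of Section~\ref{S:LRSC}, the induced orbit $\X^\RR$ is a hyperbolic periodic orbit of the unperturbed induced ODE on $P^\RR$, with $u^0(t_0)$ a hyperbolic fixed point of the Poincar\'e return map on the transverse section obtained by projecting $\Sigma$. Since the perturbed and unperturbed induced flows coincide along $u^\eps$ near $t_0$, the normalised point $u^\eps(t_0)$ is itself a fixed point of the \emph{unperturbed} Poincar\'e map; hyperbolicity makes $u^0(t_0)$ the unique such fixed point in a neighbourhood, and continuous dependence places $u^\eps(t_0)$ within that neighbourhood for $\eps\ll 1$. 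Hence $u^\eps(t_0)=u^0(t_0)$, and ODE uniqueness propagates this equality through a neighbourhood of $t_0$.

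Substituting $u^\eps=u^0$ into the conflicting equation~\eqref{e:UOODE_conflict} at $t=t_0$ and using $x^0_{m+1}=u^0_{[m+1]}=u^0_1$, the unperturbed node-$(m+1)$ component of~\eqref{E:admissODE} evaluated on $\X^0$ yields $\dot u^0_1(t_0)=f_{m+1}(u^0_1(t_0),u^0_{[T(m+1)]}(t_0))$, whereas the conflicting equation reads
\[
\dot u^0_1(t_0)=f_{m+1}(u^0_1(t_0),u^0_{[T(m+1)]}(t_0))+\eps\, p_{m+1}(u^0_1(t_0),u^0_{[T(m+1)]}(t_0)).
\]
Subtracting and dividing by $\eps>0$ yields $p_{m+1}(x^0_{m+1}(t_0),x^0_{[T(m+1)]}(t_0))=0$, contradicting~\eqref{E:conflict}.

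The main obstacle is the middle step: strong hyperbolicity is essential to pin down $u^\eps=u^0$ via Poincar\'e-map uniqueness. Without this control, one cannot exclude the possibility that $u^\eps(t_0)$ drifts away from $u^0(t_0)$ with $\eps$ and the conflicting equation is satisfied by a genuine correction rather than by forcing $p_{m+1}$ to vanish at a point where the hypothesis forbids it. The remaining steps amount to a direct reading of the OODE structure.
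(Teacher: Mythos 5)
Your proposal follows the same strategy as the paper's proof: use \eqref{E:allvanish} to make the perturbed and unperturbed induced ODEs agree along the perturbed orbit, invoke strong hyperbolicity to conclude $u^\eps = u^0$, and then extract the contradiction from the conflicting equation at $t_0$. The phrasing via the Poincar\'e map on $P^\RR$ is a legitimate repackaging of the paper's more telegraphic assertion that $\X^\eps = \X$.

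However, there is a gap in your justification of the central step. You write that ``since the perturbed and unperturbed induced flows coincide along $u^\eps$ near $t_0$, the normalised point $u^\eps(t_0)$ is itself a fixed point of the unperturbed Poincar\'e map.'' This does not follow: the return map transports $u^\eps(t_0)$ around the entire closed orbit, over a time interval of order the period $T$, and the flows coinciding near $t_0$ only constrains a short arc of that journey. Away from $t_0$ the two flows could, a priori, diverge, in which case $u^\eps(t_0)$ would be a fixed point of the \emph{perturbed} Poincar\'e map but not the unperturbed one. What is actually needed --- and what the paper's own proof also tacitly uses --- is that $p^\RR$ vanishes along the whole closed orbit $u^\eps$, not merely for $|t-t_0|\ll 1$. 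This is not a consequence of \eqref{E:allvanish} alone; it holds because the perturbation supplied by Lemma~\ref{L:p_exists} is constructed with compact support in a small neighbourhood of $x^0(t_0)$ and its vertex-group images, so the orbit meets the support only during the time window already controlled by \eqref{E:allvanish}. Once that is in hand, $u^\eps$ is a bona fide periodic orbit of the \emph{unperturbed} induced ODE, your fixed-point claim becomes correct, and the rest of the argument (hyperbolic uniqueness, then ODE uniqueness, then the conflicting equation) goes through exactly as you describe.
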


\begin{proof}
The periodic orbit $\X$ induces a canonically defined periodic orbit $\X^\RR$
for \eqref{e:UOODE_induced} on $P^\RR$.
By~\eqref{E:allvanish}, for all $\eps \ll 1$ and $t$ near $t_0$, 
the induced equations for the perturbed $u^\eps_c(t)$ are the same
as those for the unperturbed coordinate $u^0_c(t)$. Now
$\X^\eps \to \X$ as $\eps \to 0$. By strong hyperbolicity, if $\eps \ll 1$ 
 then $\X^\eps = \X$.
Since initial conditions are determined by the Poincar\'e section $\Sigma$,
we have $x^\eps(t) \equiv x(t)$ for $t \in J$.  Thus
\[
u^\eps_c(t) \equiv u^0_c(t) \quad \forall t \in J
\] 
(By uniqueness of solutions of ODEs, this identity is valid for all $t\in \R$.)
The conflicting equation is
\[
\dot{u}^0_1 = f_{m+1}(u^0_1, u^0_{[T(m+1)]}) +\eps p_{m+1}(u^0_1, u^0_{[T(m+1)]})
	\quad \forall \eps \ll 1, t \in J.
\]
Since all terms except $\eps$ are independent of $\eps$,
\[
p_{m+1}(u^0_1(t), u^0_{[T(m+1)]}(t)) = 0 \quad \forall \eps \ll 1, t \in J
\]
This is a contradiction since $p_{m+1}$ is 
nonzero at $(u^0_1(t_0), u^0_{[T(m+1)]}(t_0))$, by~\eqref{E:conflict}.
\qed\end{proof}

Thus the proof of 
the Local Rigid Synchrony Property, Theorem~\ref{T:localRSC},
reduces to proving the second lemma:

\begin{lemma}
\label{L:p_exists} There exists an admissible perturbation $p$, with support
near $x^0(t_0)$ and its images under the vertex group $B(1,1)$, 
that satisfies \eqref{E:allvanish} and \eqref{E:conflict}.
\end{lemma}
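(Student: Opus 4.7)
The plan is to concentrate the entire perturbation on the input equivalence class $\mathcal{I}^*$ of node $m+1$ and set $p_c\equiv 0$ for every other class; by Proposition~\ref{P:char_admiss} this specification is admissible, and \eqref{E:allvanish} then holds trivially for $c\in\{1,\ldots,m\}\setminus\mathcal{I}^*$. Fix a representative $c^*\in\mathcal{I}^*$, identify the input spaces of all nodes in $\mathcal{I}^*$ with $P_{c^*}\times P_{T(c^*)}$ in standard order (Remark~\ref{r:identifications}), and let $\Gamma=B(c^*,c^*)$, which by \eqref{E:beta_action} acts trivially on the first coordinate. Writing $\sigma=\mathcal{I}^*\cap\{1,\ldots,m\}$, the objective is to construct a single $\Gamma$-invariant bump $p_{c^*}:P_{c^*}\times P_{T(c^*)}\to P_{c^*}$ whose support clusters around the ``target''
\[
z^* = (u^0_1(t_0),\, u^0_{[T(m+1)]}(t_0))
\]
and its $\Gamma$-orbit, but is disjoint from a tubular neighborhood of the compact ``avoid'' set
\[
A = \bigcup_{c'\in\sigma}\{(u^0_{c'}(t),\, u^0_{[T(c')]}(t)) : |t-t_0|\leq\tau\},
\]
for $\tau>0$ small; then $p_c:=p_{c^*}$ throughout $\mathcal{I}^*$ gives a $\GG$-admissible $p$.

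The crux of the proof, and the main obstacle, is to establish $d(\OO(z^*),\OO(A))>0$, for this is precisely where the unbalanced condition \eqref{e:input1}--\eqref{e:input2} must be used. Since $\Gamma$ preserves the first coordinate, $\OO(z^*)$ projects onto $\{u^0_1(t_0)\}$ in the first factor. If $1\not\sim_I m+1$ then $1\notin\sigma$, so every $c'\in\sigma$ satisfies $c'\neq 1$ and hence $u^0_{c'}(t_0)\neq u^0_1(t_0)$ by Proposition~\ref{P:separation}; continuity yields uniform first-coordinate separation on $A$ once $\tau$ is small. If $1\sim_I m+1$ then $1\in\sigma$ and the arc at $c'=1$ meets $u^0_1(t_0)$ in the first coordinate, but \eqref{e:input2} together with Lemma~\ref{L:betastar} asserts that $u^0_{[T(1)]}(t_0)$ and $u^0_{[T(m+1)]}(t_0)$ lie in distinct (finite) $\Gamma$-orbits and so at positive distance; continuity propagates this second-coordinate separation to a neighborhood in $t$, while the remaining $c'\in\sigma\setminus\{1\}$ are handled by the first-coordinate mechanism.

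Given $d:=d(\OO(z^*),\OO(A))>0$, apply Proposition~\ref{P:network_bump} with any nonzero $\mathbf{w}\in P_{c^*}$ to obtain a $C^\infty$ bump $\Phi$ supported in $B_{2\delta}(z^*)$ with $\Phi\equiv\mathbf{w}$ on $B_\delta(z^*)$, for $\delta<d/3$ taken further smaller than $\tfrac{1}{2}\min\{\|z-z^*\|:z\in\OO(z^*)\setminus\{z^*\}\}$ so that $B_\delta(z^*)$ isolates $z^*$ within its orbit. The symmetrisation $p_{c^*}=\Phi^\Gamma$ is $\Gamma$-invariant, has support in the $2\delta$-neighborhood of $\OO(z^*)$ (hence disjoint from $\OO(A)\supseteq A$), and because $\Phi$ takes values that are nonnegative scalar multiples of $\mathbf{w}$, every term in the averaging sum
\[
p_{c^*}(z^*) = \frac{1}{|\Gamma|}\sum_{\gamma\in\Gamma}\Phi(\gamma\cdot z^*)
\]
is a nonnegative multiple of $\mathbf{w}$, with the stabiliser terms contributing the full $\mathbf{w}$; hence $p_{c^*}(z^*)$ is a strictly positive multiple of $\mathbf{w}$, giving \eqref{E:conflict}. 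For \eqref{E:allvanish}, continuous dependence of the flow on $\eps$ places $(u^\eps_c(t),u^\eps_{[T(c)]}(t))$ for $c\in\sigma$ within any prescribed neighborhood of the unperturbed arc in $A$ once $\eps$ and $|t-t_0|$ are small enough, and there $p_{c^*}$ vanishes identically, completing the construction.
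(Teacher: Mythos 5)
Your construction is correct and follows essentially the same route as the paper: vanish outside the input class of node $m+1$, separate the vertex-group orbit of $U_{m+1}$ from the orbits of the $U_c$ ($1\le c\le m$) using Proposition~\ref{P:separation} for the first coordinate and the unbalancedness condition \eqref{e:input2} when $1\sim_I m+1$, then insert a symmetrised bump supported near the target orbit. The only differences are organisational — you give a unified two-way dichotomy directly on $\GG$ where the paper splits into three cases and lifts through the quasi-quotient $\GG^\RR$, and you make the continuity-in-$(\eps,t)$ step explicit by thickening the avoid set to short arcs — neither of which changes the substance of the argument.
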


We prove Lemma~\ref{L:p_exists} in the next subsection. To do so, 
we need some additional observations.

\subsection{Construction of the Perturbation}
\label{S:coP}

It remains to construct a $\GG$-admissible perturbation $p:P \to P$ 
that satisfies \eqref{E:allvanish} and \eqref{E:conflict}. We do this
by constructing a $\GG^\RR$-admissible perturbation $q:P^\RR\to P^\RR$
and lifting it to $p$ using Theorem~\ref{T:QQadmiss}.

To construct $q$,
we focus on the ODE \eqref{e:UOODE_induced} and the 
constraint \eqref{e:UOODE_conflict}, remembering that 
\[
m+1 \bz 1
\]
 To simplify notation, write
\beqn
U_c &=& (u^0_{c}(t_0), u^0_{[T(c)]}(t_0)) \\
\OO_c &=& \OO(U_c)\quad \mbox{for}\ \Gamma = B(c,c) \\
U_c^\eps &=& (u^\eps_{c}(t_0), u^\eps_{[T(c)]}(t_0)) \\
\OO_c^\eps &=& \OO(U_c^\eps)\quad \mbox{for}\ \Gamma = B(c,c)
\eeqn
The identifications in Remark~\ref{r:identifications}, transferred to $P^\RR$, imply that
the finite sets $\OO_c, \OO_d$ lie in the same space $P_c \times P_{[T(c)]}$
if $c \sim_I d$, and otherwise lie in distinct spaces. Moreover, when
$c \sim_I d$ the groups $B(c,c), B(d,d)$ and their actions on this
space are identified.

\begin{lemma}
\label{L:disj_orbits}
With the above notation,
\[
\OO_c \cap \OO_d = \emptyset\qquad \forall c,d: 1 \leq c \neq d \leq m+1, c \sim_I d
\]
\end{lemma}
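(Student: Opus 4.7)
The plan is to exploit two different mechanisms that force the orbits to be disjoint: separation of the first (distinguished) coordinate, and the assumed unbalance of $\bz$ at nodes $1$ and $m+1$. I would first observe that by~\eqref{E:beta_action} the vertex group $B(c,c)$ acts trivially on the first coordinate of $(u_c, u_{[T(c)]})$, so every element of $\OO_c$ has first coordinate $u^0_c(t_0)$, and similarly for $\OO_d$. Consequently, whenever $u^0_c(t_0) \neq u^0_d(t_0)$ the orbits cannot meet, and the lemma follows immediately for such pairs.

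Next, I would sort out which pairs $1 \le c \ne d \le m+1$ satisfy $u^0_c(t_0) = u^0_d(t_0)$. Under the convention $u_k = u_{[k]}$ for $k \notin \RR$, we have $u^0_{m+1}(t_0) = u^0_1(t_0)$ because $[m+1]=1$. For any two distinct indices both lying in $\{1,\ldots,m\} = \RR$, Proposition~\ref{P:separation} applied to the locally rigid colouring $\bz$ yields $u^0_c(t_0) \neq u^0_d(t_0)$. Combining these facts, the only pair among $1 \le c \ne d \le m+1$ whose first coordinates coincide is $\{c,d\} = \{1, m+1\}$; for every other input-equivalent pair, the first-coordinate argument finishes the proof.

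It then remains to deal with $\{c,d\} = \{1, m+1\}$. Here the hypothesis $c \sim_I d$ places us in the second case~\eqref{e:input2} of unbalance at nodes $1, m+1$, which by definition asserts that $u^0_{[T(1)]}(t_0)$ and $u^0_{[T(m+1)]}(t_0)$ lie in distinct orbits of $B(1,1)$. Via the identification of group actions described in Remark~\ref{r:identifications} and the bracket-pullback identity of Lemma~\ref{L:betastar}, this is exactly the statement $\OO_1 \cap \OO_{m+1} = \emptyset$.

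There is no serious obstacle; the main point to get right is the notational convention $u_{m+1} = u_1$ together with the correct invocation of the unbalance hypothesis for that single critical pair. For every other input-equivalent pair, the triviality of the $B(c,c)$-action on the distinguished coordinate $u_c$, combined with separation of representatives, does all the work.
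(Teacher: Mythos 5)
Your proof is correct and follows essentially the same route as the paper's: the triviality of the vertex-group action on the distinguished first coordinate disposes of all pairs with $u^0_c(t_0)\neq u^0_d(t_0)$ via Proposition~\ref{P:separation}, and the single remaining pair $\{1,m+1\}$ is handled by the unbalance condition~\eqref{e:input2}. The only difference is that you spell out the accounting of which first coordinates coincide (noting $u^0_{m+1}(t_0)=u^0_1(t_0)$ under the convention $u_k=u_{[k]}$), which the paper compresses into the sentence ``Since $m+1\bz 1$, the only possible non-empty intersection occurs for $\OO_1\cap\OO_{m+1}$'' --- a welcome bit of extra explicitness, not a different argument.
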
 
\begin{proof}
Proposition~\ref{P:separation}
 implies that the points $u_c^0(t_0)$ are distinct for $1 \leq c \leq m$.
Therefore, by projection onto $P_c$, the points
$U_c$ are distinct for $1 \leq c \leq m$.
Since $\Gamma$ acts trivially on the `base point'
$u^0(t_0)$ of $U_c$, it follows that $\OO_c \cap \OO_d = \emptyset$ whenever
$1 \leq c \neq d \leq m$. 

Since $m+1 \bz 1$, the only possible
non-empty intersection occurs for $\OO_1 \cap \OO_{m+1}$. These two sets
lie in the same space only when $m+1 \sim_I 1$, which is case (c).
But then, $\OO_1$ and $\OO_{m+1}$ are disjoint by \eqref{e:input2}.
\qed\end{proof}


\vspace{.1in}
\noindent
{\bf Proof of Lemma~\ref{L:p_exists}}

The proof splits into three cases:
\begin{itemize}
\item[\rm (a)] Node $m+1$ is not input equivalent to
any of nodes $1, \ldots, m$.

\item[\rm (b)] Node $m+1$ is input equivalent to node $k$
where $2 \leq k \leq m$.

\item[\rm (c)] Node $m+1$ is input equivalent to node 1.
\end{itemize}

The arguments are very similar in all cases, but differ in fine detail.
We take the three cases in turn.

\vspace{.1in}
\noindent
{\em Case} (a): 

Here we define the perturbation $p$ directly for $\GG$. 
We use a bump function and symmetrisation, as in Sections~\ref{S:BF} and~\ref{S:symmetrisation}, to define $p_{m+1}$ so that
\eqref{E:conflict} holds. Specifically, In Lemma~\ref{L:bump_sym} 
make $\delta$ sufficiently small and take
\begin{equation*}
\label{E:}
\begin{array}{lcl}
\Gamma = B(m+1,m+1) &&\\ 
V = P_{m+1} \times P_{[T(m+1)]} &&
\quad W = P_{m+1} \\
A = \emptyset \qquad &&
\quad B = \{(x^0_{m+1}(t_0), x^0_{[T(m+1)]}(t_0))\}
\end{array}
\end{equation*}
The hypothesis $\OO(A)\cap \OO(B) = \emptyset$ clearly holds.
 
Define $p_{m+1} = h$ where $h$ is as in Lemma~\ref{L:bump_sym}. Then
$p_{m+1}(B) \neq 0$, which is \eqref{E:conflict}.
Since variables are in standard order, the transition
maps are the identity, so this determines all $p_c$ with $c \sim_I m+1$.
On all remaining input equivalence classes, we set $p_c \equiv 0$.
In particular, $p_c \equiv 0$ for $1 \leq c \leq m$, so \eqref{E:allvanish} holds.
Since $h$ has compact support, so does $p$, and $p$ is $C^1$-bounded.

\vspace{.1in}
\noindent
{\em Case} (b):

Since $m+1 \sim_I k$ and variables are in standard order, 
$g_{m+1} = g_k$ for any $\GG$-admissible map $g$.
Let $\CC_1 = \{c : c \sim_I k\ \&\ 1 \leq c \leq m\}$. 

We define a perturbation $q$ for $\GG^\RR$ and then lift to obtain $p$
using Theorem~\ref{T:QQadmiss}. We define $q$ so that $q$ has
 compact support on $P^\RR$,
\begin{equation}
\label{E:allvanishQ}
q_c(U^\eps_c) =  0 \quad 1 \leq c \leq m,\ \eps \ll 1
\end{equation}
and
\begin{equation}
\label{E:conflictQ}
q_{k}(U_{m+1}) \neq 0
\end{equation}
To do so, we define $q_c$ for $c \in \CC_1$ and make it vanish on $\RR\setminus \CC_1$.
Two conditions on $q_k$ must be satisfied: \eqref{E:allvanishQ} when $c=k$, and
\eqref{E:conflictQ}.
If these conditions are satisfied, any lift $p$ of $q$
satisfies \eqref{E:allvanish} and \eqref{E:conflict} and also has
 compact support, hence is $C^1$-bounded. Therefore $\eps p$
 is $C^1$-small for $\eps \ll 1$. We are therefore finished once we show that
 \eqref{E:allvanish} and \eqref{E:conflict} can be satisfied simultaneously.
 
 To complete the proof, let
  $\CC_1 = \{c : c \sim_I k\ \&\ 1 \leq c \leq m\}$.
Let $q_d = 0$ for all $d \in \RR\setminus \CC_1$. For
 $c \in \CC_1$, we define $q_c$ using Lemma~\ref{L:bump_sym}. 
Make $\delta$ sufficiently small and take
\begin{equation*}
\label{E:case b}
\begin{array}{lcl}
\Gamma = B(m+1,m+1) &&\\ 
V = P_k \times P_{[T(k)]} &&
W = P_k \\
A = \bigcup_{c \in \CC_1} \OO_c &&
B = \{U_{m+1}\}
\end{array}
\end{equation*}
The hypothesis $\OO(A)\cap \OO(B) = \emptyset$  holds by Lemma~\ref{L:disj_orbits}.
This defines a $\GG^\RR$-admissible perturbation $q:P^\RR \to P^\RR$.
Lift $q$ to a $\GG$-admissible map $p:P \to P$, making it zero on all components
that are not input equivalent to a node in $\RR  \cup \{m+1\}$; then
$p$ has the required properties. Again $p$  
 is $C^1$-bounded, so $\eps p$
 is $C^1$-small for $\eps \ll 1$ and case (b) is proved. 

\vspace{.1in}
\noindent
{\em Case} (c):
Again we define a perturbation $q$ for $\GG^\RR$ and then lift to obtain $p$
using Theorem~\ref{T:QQadmiss}.

In this case $m+1 \sim_I 1$ so $q_{m+1}=q_1$. 
Let $\CC_1 = \{c : c \sim_I 1\ \&\ 1 \leq c \leq m\}$. As before, we
define $q_c$ for $c \in \CC_1$ and make it vanish on $\RR\setminus \CC_1$.
The only potential obstacle is that both \eqref{E:allvanish} and \eqref{E:conflict} 
impose conditions on $p_1$, which might be contradictory. 
We want $q_1$ to vanish at $U_1^\eps$ for $\eps \ll 1$,
but to be nonzero at $U^0_{m+1}$. Again we use Lemma~\ref{L:bump_sym}.
Make $\delta$ sufficiently small, and take
\begin{equation*}
\label{E:case c}
\begin{array}{lcl}
\Gamma = B(1,1) = B(m+1,m+1) &&\\ 
V = P_1 \times P_{[T(1)]} &&
W = P_1 \\
A = \bigcup_{c \in \CC_1} \OO_c &&
B = \{U_{m+1}\}
\end{array}
\end{equation*}
The hypothesis $\OO(A)\cap \OO(B) = \emptyset$  holds by Lemma~\ref{L:disj_orbits}.
This defines a $\GG^\RR$-admissible perturbation $q:P^\RR \to P^\RR$.
Lift $q$ to a $\GG$-admissible map $p:P \to P$, making it zero on all components
that are not input equivalent to a node in $\RR$. Then
$p$ has the required properties. Again $p$  
 is $C^1$-bounded, so $\eps p$
 is $C^1$-small for $\eps \ll 1$ and case (b) is proved. 
 
  This completes the proof of Theorem~\ref{T:localRSC}.
  \qed

\begin{remarks}\em


(a) The same strategy gives another proof of the Rigid Equilibrium Theorem, first
proved in~\cite[Theorem 7.6]{GST05}. This new proof (see~\cite{S20})
is simpler than the periodic case, and we can appeal to Sard's Theorem instead of
assuming strong hyperbolicity as an extra hypothesis.
This approach has some similarities to a proof based on transversality arguments
by Aldis~\cite[Theorem 7.2.3]{A10}, but is simpler.

(b) The same proof works if node spaces are arbitrary $C^\infty$
manifolds. Indeed, only $C^1$-smoothness is required throughout. In particular,
the above results and proofs remain valid for phase oscillators, where the node state spaces
are the circle $\Sone$. 
\qed\end{remarks}

\section{Global Rigid Synchrony and the\\Rigid Input Property}
\label{S:GRS}

Having established the main result of this paper, we can
deduce the usual global version of the Rigid Synchrony 
Property~\cite{GRW10,SP07}, assuming as before
that $\X$ is strongly hyperbolic. As discussed in Section~\ref{S:CLS},
the main obstacle is the possibility, in principle, that a local
synchrony pattern on some interval of time need not be the same as the global
synchrony pattern of the entire periodic orbit $\X$.  Indeed, the global synchrony 
pattern $\bowtie_\X$ need not equal the local synchrony pattern
$\bowtie_{x(t)}$ for any specific $t$.
In fact, rigidity prevents this happening, but the proof requires a little care.
To make the proof precise we require a number of technical definitions.
We also use the lattice of colourings from Section~\ref{S:lattice}.

\subsection{Local Rigidity Implies Global Rigidity}

We now appeal to the Local Rigid Synchrony Theorem~\ref{T:localRSC} to 
show that the above change of local synchrony pattern cannot
occur if  $\X$ is strongly hyperbolic and the global pattern $\bowtie_\X$ is rigid. The following remark is useful:

\begin{remark}\em
\label{R:manypert}  
Any finite number of perturbations performed in turn can be made arbitrarily small by
making successive sizes be $\eps/2, \eps/4, \eps/8, \ldots$. The triangle inequality
then shows that the combined perturbation has size $<\eps$.
We can also reduce the size of $\eps$ or the interval $J$ finitely many times
if required.
\qed\end{remark}

\begin{theorem}
\label{T:globalRSC}
Assume that $\X$ is strongly hyperbolic, and
suppose that the global synchrony pattern $\bowtie_{\X}$ on $\X$ is rigid.
Then it is balanced.
\end{theorem}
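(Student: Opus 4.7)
The plan is to reduce Theorem~\ref{T:globalRSC} to the Local Rigid Synchrony Theorem~\ref{T:localRSC}. Concretely, I aim to locate a time $t_0$ with $\bowtie_{x(t_0)}=\bowtie_{\X}$ and then upgrade global rigidity of $\bowtie_{\X}$ to local rigidity of $\bowtie_{x(t_0)}$ at $t_0$; Theorem~\ref{T:localRSC} immediately forces $\bowtie_{\X}$ to be balanced. The upgrade is a simple bracketing. Fix a Poincar\'e section $\Sigma$ transverse to $\X$ at $x(t_0)$ and normalise the locally unique perturbed periodic orbit $\tilde\X^p$ (Lemma~\ref{L:hyperbolic}) for a small admissible perturbation $p$ by $\tilde x^p(t_0)\in\Sigma$. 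Global rigidity gives $\tilde\X^p\subseteq\Delta_{\bowtie_{\X}}$, equivalently $\bowtie_{\X}\preceq\bowtie_{\tilde x^p(t)}$ for every $t$. Continuity in $(\eps,t)$ places $\tilde x^p(t)$ arbitrarily close to $x(t_0)$ for $|t-t_0|$ and $\|p\|_1$ small, so Proposition~\ref{P:semicon} yields $\bowtie_{\tilde x^p(t)}\preceq\bowtie_{x(t_0)}=\bowtie_{\X}$. The two inequalities collapse to $\bowtie_{\tilde x^p(t)}=\bowtie_{\X}$ on an open neighbourhood of $t_0$, which is precisely Definition~\ref{D:LRSP}.

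The main obstacle is producing such a $t_0$, because Figure~\ref{F:Delta_changes} shows the orbit could lie in the finite union $\bigcup_{\sigma\succneq\bowtie_{\X}}\Delta_\sigma$ with every local pattern strictly coarser than $\bowtie_{\X}$. I would handle this via a Baire-category argument on $\X$ together with the finiteness of the lattice of polydiagonals (Section~\ref{S:lattice}): up to a lower-dimensional boundary set, $\R$ decomposes into finitely many open intervals $J_1,\dots,J_k$ on each of which $\bowtie_{x(t)}$ is constant and equal to some $\bowtie_{J_i}\succeq\bowtie_{\X}$, and Proposition~\ref{P:global_synch} together with the duality between colourings and polydiagonals then gives $\bowtie_{\X}=\bigwedge_i\bowtie_{J_i}$. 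Since balanced colourings form a sublattice (Section~\ref{S:lattice}), it suffices to prove each $\bowtie_{J_i}$ is balanced; by Theorem~\ref{T:localRSC} this further reduces to showing each $\bowtie_{J_i}$ is locally rigid on $J_i$.

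This last step is the delicate point. The bracketing applied at a $t_0\in J_i$ delivers only $\bowtie_{\X}\preceq\bowtie_{\tilde x^p(t)}\preceq\bowtie_{J_i}$, and the upper inequality can in principle be strict when $\bowtie_{J_i}\succneq\bowtie_{\X}$. Strong hyperbolicity of $\X$ is what I expect to close the gap, via an arbitrarily small admissible pre-perturbation of $f$ (combined with further small perturbations as in Remark~\ref{R:manypert}) under which the induced orbit on the appropriate quasi-quotient space $P^\RR$ is hyperbolic and not contained in any proper sub-polydiagonal over an interval. Such a pre-perturbation should either produce directly a generic $t_0$ with $\bowtie_{x(t_0)}=\bowtie_{\X}$ (collapsing to the easy case above), or force the upper inequality in the bracketing to be an equality on each $J_i$; in either case Theorem~\ref{T:localRSC} then closes the proof.
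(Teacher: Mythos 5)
Your high-level strategy (reduce to Theorem~\ref{T:localRSC} via semicontinuity of colourings, the lattice of polydiagonals, and a bracketing argument) is aligned with the paper's proof, and your ``easy case'' bracketing at a point $t_0$ with $\bowtie_{x(t_0)}=\bowtie_\X$ is correct. However there is a genuine gap in how you handle the general case, and it is precisely the step the paper's proof is built around.

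The missing idea is this: you do not need to decompose the orbit into intervals $J_i$ on which the local pattern is constant, nor do you need to show that the various $\bowtie_{J_i}$ are individually locally rigid (they generally are not --- rigidity of the \emph{global} pattern $\bowtie_\X$ says nothing about rigidity of the strictly coarser local patterns). Instead, fix any $t_0$ and let $\bz=\bowtie_{x(t_0)}$, which is coarser than $\bowtie_\X$. By Proposition~\ref{P:semicon} a small perturbation can only make $\tbz$ finer, and global rigidity keeps $\tilde\X\subseteq\Delta_{\bowtie_\X}$ so $\tbz$ stays coarser than $\bowtie_\X$. Now \emph{iterate}: whenever some small admissible perturbation makes $\tbz$ strictly finer, apply it. The lattice of colourings is finite and $\tbz$ is bounded below by $\bowtie_\X$, so the process terminates; by Remark~\ref{R:manypert} the accumulated perturbation stays arbitrarily small. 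When it terminates, $\tbz$ cannot be made finer by any further small perturbation and cannot become coarser by semicontinuity, so it is locally rigid near $t_0$ by Definition~\ref{D:LRSP}. Theorem~\ref{T:localRSC} then makes $\tbz$ balanced; flow-invariance of $\Delta_{\tbz}$ forces $\tilde\X\subseteq\Delta_{\tbz}$, hence $\Delta_{\bowtie_\X}\subseteq\Delta_{\tbz}$ by minimality of the global polydiagonal, which together with $\Delta_{\tbz}\subseteq\Delta_{\bowtie_\X}$ gives equality.

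Two secondary problems in your write-up: the decomposition of $\R$ into finitely many open intervals of constant local pattern is not available in general (the paper explicitly warns that the local synchrony pattern may change infinitely often along the orbit), and the appeal to strong hyperbolicity to ``force the upper inequality to be equality on each $J_i$'' is not a mechanism --- strong hyperbolicity enters only through Theorem~\ref{T:localRSC}, not as a tool for producing generic points directly. The iterate-until-stable argument above is elementary and bypasses both issues.
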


\begin{proof}
As before, denote the local synchrony pattern at $x_{t_0}$
by $\bz$.
Let $\bowtie_\X$ be the global synchrony pattern of $\X$ and assume this is rigid.

Since $\X \subseteq \Delta_{\bowtie_\X}$, the colouring $\bz$ is
coarser than $\bowtie_\X$. Proposition~\ref{P:semicon} implies that after any sufficiently small perturbation,
in which $\X$ becomes $\tilde\X$ and $x_0$ becomes $\tilde x_0$, the colouring
$\tbz$ can become finer than $\bz$, but not coarser.
Therefore $\tbz$ remains coarser than $\bowtie_\X$ since
$\bowtie_\X$ is rigid; that is, $\Delta_{\tbz} \subseteq \Delta_{\bowtie_\X}$.

Continue making perturbations until $\tbz$ is as fine as possible,
By Remark~\ref{R:manypert}, the combined perturbation can be made as small as we wish.
Now $\tbz$ is locally rigid. By Theorem~\ref{T:localRSC},
$\tbz$ is balanced. Therefore $\tilde \X$ intersects the
synchrony space for $\tbz$, and flow-invariance implies
that $\X \subseteq\, \Delta_{\tbz}$. By definition of the global
synchrony pattern, $\Delta_{\bowtie_\X} \subseteq\, \Delta_{\tbz}$.
This the two are equal, so $\bowtie_\X$ is balanced.
\end{proof}

\begin{corollary}
\label{C:global}
Let $\X$ be a strongly hyperbolic periodic orbit with a locally rigid
synchrony pattern on some non-empty open time interval. Then 
the entire orbit has that synchrony pattern, and it is balanced.
\end{corollary}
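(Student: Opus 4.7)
The plan is to reduce the corollary directly to Theorem~\ref{T:localRSC} combined with flow-invariance of balanced polydiagonals. Let $\bowtie$ denote the locally rigid synchrony pattern on the non-empty open interval $J$, and pick any $t_0 \in J$. By Theorem~\ref{T:localRSC} applied to the strongly hyperbolic orbit $\X$, the colouring $\bowtie$ is balanced, which already establishes the second assertion.

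For the first assertion, I would invoke the standard fact recalled in Section~\ref{S:DN} (following Definition~\ref{D:balance}) that $\Delta_{\bowtie}$ is flow-invariant under every admissible map precisely because $\bowtie$ is balanced. By the defining property of the local synchrony pattern, $x(t) \in \Delta_{\bowtie}$ for every $t \in J$. Flow-invariance then propagates this containment to the entire trajectory, so that $\X \subseteq \Delta_{\bowtie}$.

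It remains to identify $\bowtie$ with the global synchrony pattern $\bowtie_{\X}$. The inclusion $\X \subseteq \Delta_{\bowtie}$ says $c \bowtie d \Rightarrow x_c(t) \equiv x_d(t)$ for all $t \in \R$, hence $c \bowtie_{\X} d$; so $\bowtie$ refines $\bowtie_{\X}$. Conversely, $c \bowtie_{\X} d$ means $x_c \equiv x_d$ globally, which in particular holds on $J$, so $c \bowtie d$. Therefore $\bowtie = \bowtie_{\X}$, and $\X$ carries this (balanced) pattern globally, as required.

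There is no serious obstacle here beyond correctly chaining the two observations: essentially all the work has been absorbed into Theorem~\ref{T:localRSC}. The one subtlety worth noting is that one must use the balanced-pattern version of flow-invariance rather than trying to argue directly from rigidity — without the balanced conclusion of Theorem~\ref{T:localRSC}, the non-flow-invariant polydiagonal discussed in Section~\ref{S:CLS} (cf.\ Figure~\ref{F:Delta_changes}) would a priori allow the local pattern on $J$ to differ from the global one.
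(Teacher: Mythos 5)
Your proof is correct and follows essentially the same route as the paper's: apply Theorem~\ref{T:localRSC} to conclude that the local pattern $\bowtie$ is balanced, use flow-invariance of $\Delta_{\bowtie}$ plus the fact that $\X$ meets $\Delta_{\bowtie}$ on $J$ to get $\X \subseteq \Delta_{\bowtie}$. The only (minor) difference is that you explicitly close the loop by showing $\bowtie_\X$ is also finer than $\bowtie$, hence equal to it — a triviality the paper leaves implicit, but a worthwhile sanity check.
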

\begin{proof}
By Theorem~\ref{T:localRSC} the rigid local synchrony pattern $\bz$ is balanced.
Therefore $\Delta_{\bz}$ is flow-invariant.
But $\X \cap \Delta_{\bz} \neq \emptyset$,
so $\X \subseteq\, \Delta_{\bz}$.
\qed\end{proof}

Some points on the periodic orbit might have extra equalities
among their components, compared to $\bowtie_\X$,
but these cannot be balanced and cannot 
persist rigidly on any non-empty open interval of time. In particular,
the scenario of Figure~\ref{F:Delta_changes} cannot occur rigidly
when $\X$ is strongly hyperbolic.

\subsection{Local Rigid Input Property}

The Local (hence also the global) Rigid Input Property now follows trivially:

\begin{corollary}
\label{C:LRIP}
The Local Rigid Input Property holds for all strongly hyperbolic periodic orbits.
\end{corollary}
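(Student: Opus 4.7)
The plan is to deduce the Local Rigid Input Property as an essentially immediate corollary of the Local Rigid Synchrony Theorem~\ref{T:localRSC}, exploiting the fact that \emph{balanced colourings refine input equivalence}.

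First, for the synchrony part of RIP, suppose $\X$ is a strongly hyperbolic periodic orbit with a locally rigid synchrony pattern $\bowtie$ on a non-empty open interval $J$. By Theorem~\ref{T:localRSC}, $\bowtie$ is balanced. Now apply Definition~\ref{D:balance}: whenever $c \bowtie d$, there exists a colour-preserving input isomorphism $\beta : I(c) \to I(d)$. In particular, $\beta$ is an input isomorphism in the sense of Definition~\ref{D:input}(b), so $c \sim_I d$. This is precisely the conclusion of RIP for synchronous nodes. No further work is required for this half.

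For the phase part of RIP (stated jointly in Definition~\ref{D:properties}(a)), I would defer to Section~\ref{S:RPC}. There the doubling trick of~\cite{GRW12} converts a rigid phase pattern on $\GG$ into a rigid synchrony pattern on the doubled network $2\GG$, and the proof of the Local Rigid Synchrony Theorem is shown to carry over to that setting. Once rigid phase relations on $\GG$ force a balanced colouring on $2\GG$, the same appeal to Definition~\ref{D:balance} delivers the required input isomorphism between phase-related nodes in $\GG$. This justifies stating the corollary in full generality here, with the phase case contingent on the extension carried out in Section~\ref{S:RPC}.

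I do not expect a genuine obstacle at this step: all of the work has been absorbed into Theorem~\ref{T:localRSC} and (for phase) into the doubled-network variant of Section~\ref{S:RPC}. The only thing to be careful about is the logical ordering of the paper, so that the phase half of RIP is not invoked before the Local Rigid Phase Property has been established. Everything else is a one-line application of the definition of a balanced colouring.
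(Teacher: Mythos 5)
Your proof is correct and takes essentially the same approach as the paper: the paper's own proof is the one-line observation that a colour-preserving input isomorphism (which exists by balancedness, via Theorem~\ref{T:localRSC}) is in particular an input isomorphism. Your added care about the phase half of RIP is well placed — the paper's proof of this corollary explicitly covers only synchronous nodes, and the phase-related case is indeed handled separately later as Corollary~\ref{C:rigphase} in Section~\ref{S:RPC}, exactly as you anticipate.
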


\begin{proof}
The Rigid Synchrony Property implies the Rigid Input Property
for synchronous nodes, because any colour-preserving input isomorphism
is, in particular, an input isomorphism.
\qed\end{proof}

\section{Rigid Phase Property}
\label{S:RPC}

In this section we deduce the Local Rigid Phase Property from Theorem~\ref{T:localRSC} 
using the `doubling' trick of Golubitsky \etal~\cite{GRW12}; 
see also Aldis~\cite[Chapter 10]{A10}. 

We state the Rigid Phase Property in the following local form.
The global Rigid Phase Property is the case $J = \R$.
We use $p$ rather than $\eps p$ and make $\|p\|_1$ small, and
write $\Sone  = \R/\Z$ for the circle group, representing the phase
as a proportion of the period.

\begin{theorem}[\bf Local Rigid Phase Property]
\label{T:LRPP}
Let $\X$ be a strongly hyperbolic periodic orbit of a $\GG$-admissible ODE,
and let $\tilde \X$ be the corresponding perturbed periodic orbit for an
admissible perturbation $p$ with  $\|p\|_1\ll 1$. 
Suppose that two nodes $c,d$ in $\GG$ are rigidly phase-related on a time interval $J$;
that is
\begin{equation}
\label{E:rigidphase}
\tilde{x}^h_c(t) \equiv \tilde{x}_d(t+ \theta \tilde{T}) 
	\quad \forall\ t \in J,\ \forall p \ll 1
\end{equation}
for a fixed proportion $\theta \in \Sone$ of the perturbed period $\tilde{T}$.
Then there exists a vertex symmetry $\beta \in B(d,d)$ such that
\begin{equation}
\label{E:rig_phase_input}
x_{I(c)}(t) \equiv \beta^\ast x_{I(d)}(t+ \theta T) \quad \forall\ t \in J
\end{equation}
\end{theorem}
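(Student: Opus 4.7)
The plan is to reduce the Local Rigid Phase Property to the Local Rigid Synchrony Theorem~\ref{T:localRSC} via the doubling construction of Golubitsky, Romano, and Wang~\cite{GRW12}. First I would form the disjoint union $2\GG = \GG \sqcup \GG'$, where $\GG'$ is an identical labelled copy of $\GG$ with nodes $c'$ for $c \in \CC$ and with node and arrow types agreeing on corresponding elements. Any $\GG$-admissible $f$ induces a $2\GG$-admissible map $f \oplus f$ on $P \times P$, and the orbit
\[
\Y(t) = (x(t),\, x(t+\theta T))
\]
is periodic of period $T$ for the doubled system. The hypothesised phase relation $x_c(t) \equiv x_d(t+\theta T)$ translates directly into the synchrony $y_c(t) \equiv y_{d'}(t)$ in $2\GG$, and doing this for every phase-related pair defines a local synchrony colouring $\bz_2$ on $2\GG$. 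Once unwound, the desired conclusion~\eqref{E:rig_phase_input} is precisely the statement that $\bz_2$ is balanced at the pair $(c, d')$.

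The central obstacle is that $\Y$ is never hyperbolic in $2\GG$: the family $\{(x(t+s_1), x(t+s_2)) : (s_1, s_2) \in \Sone \times \Sone\}$ is a $2$-torus foliated by periodic orbits, so $\Y$ carries a second trivial Floquet multiplier $1$, and neither hyperbolicity nor strong hyperbolicity transfers directly from $\GG$ to $2\GG$. However, rigidity of $\theta$ selects a canonical perturbed orbit: for any $C^1$-small admissible $p$ on $\GG$, the strongly hyperbolic orbit $\X$ has a unique perturbed orbit $\tilde\X$ with period $\tilde T$, and rigidity of the phase shift gives $\tilde\theta = \theta$, so
\[
\tilde\Y(t) := (\tilde x(t),\, \tilde x(t+\theta \tilde T))
\]
is an unambiguously defined perturbed periodic orbit of $(f+p)\oplus(f+p)$ on the perturbed $2$-torus. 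This canonical selection rule replaces hyperbolicity of $\Y$ for the purposes of controlling the perturbation argument.

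Next I would adapt the argument of Section~\ref{S:LRSC} to the triple $(2\GG, \Y, \bz_2)$. Choose representatives $\RR_2$ for $\bz_2$, form the quasi-quotient $(2\GG)^{\RR_2}$, and write down the associated induced OODE, conflicting equation, and constraint equations as in~\eqref{e:induced}--\eqref{e:constraint}. If $\bz_2$ were not balanced at some pair, then Lemma~\ref{L:p_exists} applied to $(2\GG)^{\RR_2}$ --- the construction is purely combinatorial and uses only bump functions, symmetrisation over the vertex group, and the lift of Theorem~\ref{T:QQadmiss} --- produces an admissible perturbation on $2\GG$ that vanishes on the induced components along $\Y$ near a chosen generic point $\Y(t_0)$ but is nonzero on the conflicting equation. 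Restricting the construction so that the perturbation takes the diagonal form $p \oplus p$ for a single $\GG$-admissible $p$ (which is possible because the bump is localised on one side and then symmetrised across the duplication) ensures that $\tilde\Y$ remains well defined via the canonical formula.

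The hard part is the replacement of hyperbolicity by this canonical-orbit control: I need to verify that when $p$ is built to vanish on the induced-ODE components along $\Y$, it also vanishes on $\X$ itself in $\GG$, so that strong hyperbolicity of $\X$ forces $\tilde x = x$ pointwise, whence $\tilde\Y(t) = (x(t), x(t+\theta T)) = \Y(t)$. Once this is in hand, the perturbed conflicting equation reads $p_{m+1}(u^0_1(t_0), u^0_{[T(m+1)]}(t_0)) = 0$, which contradicts the construction just as in Lemma~\ref{L:p=>unbal}. We conclude that $\bz_2$ is balanced, and translating balance of $\bz_2$ at $(c, d')$ back into $\GG$ produces a colour-preserving bijection $I(c) \to I(d)$ that, after the usual standard-order identification, yields the required $\beta \in B(d, d)$ realising~\eqref{E:rig_phase_input}.
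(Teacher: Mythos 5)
Your overall strategy — double the network, convert the phase relation to a synchrony relation on $2\GG$, and rerun the quasi-quotient/OODE machinery — is exactly the paper's route. The place where your argument diverges, and where it breaks, is the substitute you offer for hyperbolicity of the doubled orbit. You correctly note that $\Y$ is never hyperbolic because it sits on a foliated $2$-torus, and you propose to compensate by showing that the constructed perturbation $p$ ``also vanishes on $\X$ itself in $\GG$,'' so that strong hyperbolicity of $\X$ pins $\tilde x = x$ and hence $\tilde\Y = \Y$. That claim is false as a property of the construction, and no amount of ``verification'' can save it.

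Here is the obstruction. Write the conflicting node as $d'\in\GG_2$, with $c\in\GG_1\cap\RR_2$ its $\bz_2$-representative. The conflicting point is
\[
U_{m+1}=\bigl(y_{d'}(t_0),\,y_{[T(d')]}(t_0)\bigr)
=\bigl(x_d(t_0+\theta T),\,x_{T(d)}(t_0+\theta T)\bigr),
\]
because for every $e\in T(d)$ the coordinate $y_{[e']}(t_0)$ equals $x_e(t_0+\theta T)$ regardless of whether the representative of $e'$ is taken in $\GG_1$ or $\GG_2$. In other words, $U_{m+1}$ is exactly the point $\bigl(x_d(s),x_{T(d)}(s)\bigr)$ on $\X$ at time $s=t_0+\theta T$. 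Lemma~\ref{L:p_exists} constructs the bump so that the corresponding component $p_d$ is \emph{nonzero} there. Hence the lift $p$ of the induced perturbation $q$ is nonzero at a point on $\X$, and the perturbed $\GG$-ODE differs from the unperturbed one along $\X$; thus $\tilde x\neq x$ in general. The vanishing conditions \eqref{E:allvanish} only force $p_r$ to vanish on the pieces of $\X$ seen through the $\RR_2$-projection of $\Y$ near $t_0$ (times near $t_0$ for $\GG_1$-representatives, times near $t_0+\theta T$ for $\GG_2$-representatives); they do not control $p_d$ at the conflicting point, which is deliberately the opposite of a vanishing point. So the implication ``vanishing on induced components of $\Y$ $\Rightarrow$ vanishing on $\X$'' cannot hold, and your chain $p$ vanishes on $\X$ $\Rightarrow$ $\tilde x = x$ $\Rightarrow$ $\tilde\Y = \Y$ collapses at the first step.

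The paper closes this gap by a different device: it introduces quasi-hyperbolicity for sheared periodic orbits (Definitions~\ref{D:theta_hyp} and \ref{D:quasiP}) and proves in Lemma~\ref{L:G->2G} that strong hyperbolicity of $\X$ on $\GG$ yields quasi-hyperbolicity of $\X_\theta$ on $2\GG$. The argument of Lemma~\ref{L:p=>unbal} then goes through with this weaker notion, because rigidity of the phase shift singles out a unique sheared perturbed orbit, and quasi-hyperbolicity gives local uniqueness within that family. The conclusion ``$\tilde\Y=\Y$'' is then a \emph{consequence} of rigidity plus quasi-hyperbolicity plus \eqref{E:allvanish}, not a property of the constructed $p$, and the conflicting constraint then yields the contradiction. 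Your canonical-selection observation is correct and is indeed part of the paper's argument, but it replaces only the well-definedness of $\tilde\Y$, not the local-uniqueness step. To repair the proof you should drop the claim that $p$ vanishes on $\X$ and instead establish (or cite) quasi-hyperbolicity of $\Y^{\RR_2}$.
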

Here $\tilde \X$ and $\tilde T$
depend on $p$, but we suppress $p$ in the notation. 
Informally, a locally rigid phase shift implies that input sets of phase-related
nodes are related by the same phase shift, up to the action of a vertex group element.

The proof
of Theorem~\ref{T:LRPP} closely mimics that of Theorem~\ref{T:localRSC}, 
so we omit routine details. First, we state a simple corollary:

\begin{corollary}
\label{C:rigphase}
If nodes are related by a locally rigid phase shift, they are input equivalent.
\end{corollary}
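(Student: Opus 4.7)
The plan is to deduce this immediately from Theorem~\ref{T:LRPP}. Applied to a rigidly phase-related pair $c,d$, that theorem produces a vertex symmetry $\beta$ together with the identity
\[
x_{I(c)}(t) \equiv \beta^* x_{I(d)}(t+\theta T) \quad \forall t \in J.
\]
For this equation to be well-formed, the input tuples of $c$ and $d$ must have the same length, and corresponding entries must lie in the same node spaces. More importantly, the pullback $\beta^*$ is arrow-type-preserving by construction (Definition~\ref{D:admiss}(c) together with~\eqref{e:pullback_long}), so $\beta$ supplies an arrow-type-preserving bijection between the arrows of $I(c)$ and those of $I(d)$. By Definition~\ref{D:input}(c), any such bijection is an input isomorphism, and its existence gives $c \sim_I d$.

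The only subtle point to address is that $\beta$ appears in Theorem~\ref{T:LRPP} as an element of $B(d,d)$ rather than of the transitional set $B(c,d)$. This is purely bookkeeping. The doubling construction of Section~\ref{S:RPC} lifts the rigid phase relation between $c$ and $d$ in $\GG$ to a rigid synchrony relation between two copies of the relevant nodes in $2\GG$, and the Local Rigid Synchrony Property in $2\GG$ (Theorem~\ref{T:localRSC} together with Corollary~\ref{C:LRIP}) forces the resulting synchrony colouring to be balanced. Balancedness yields a colour-preserving, hence arrow-type-preserving, bijection between the input sets of the synchronised copies; projecting back to $\GG$ gives the desired input isomorphism $I(c) \to I(d)$, composed with the vertex symmetry $\beta \in B(d,d)$ that is actually recorded in~\eqref{E:rig_phase_input}.

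I expect no genuine obstacle here: the content of the corollary is formally packaged inside Theorem~\ref{T:LRPP}, and the proof reduces to unwinding the notation used to state that theorem. The only care required is to confirm, as above, that the pullback $\beta^*$ in~\eqref{E:rig_phase_input} can only make sense once a compatible arrow-type-preserving identification between $I(c)$ and $I(d)$ has been fixed, and that such an identification is exactly what input equivalence requires.
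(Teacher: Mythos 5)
Your argument is correct and matches the paper's in essence: the paper's own proof is the single sentence that the map $\beta$ in~\eqref{E:rig_phase_input} \emph{is} an input equivalence, and you unwind exactly why that is so. In fact your version is more careful than the paper's, since you correctly flag that $\beta$ as stated lives in the vertex group $B(d,d)$ rather than the transitional set $B(c,d)$, and you trace the needed arrow-type-preserving bijection $I(c)\to I(d)$ back to its actual source — the balancedness of the doubled colouring established in the proof of Theorem~\ref{T:LRPP} — rather than reading it off purely from the typing of $\beta^*$. This is a genuine gap-filling clarification, not a different route: both proofs are "the conclusion is packaged inside Theorem~\ref{T:LRPP}," but you make explicit the implicit identification (Remark~\ref{r:identifications}) that the theorem's statement relies on.
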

\begin{proof}
The map $\beta$ introduced just before equation~\eqref{E:rig_phase_input} is
an input equivalence.
\qed\end{proof}

The central idea in the proof of Theorem~\ref{T:LRPP}
is a trick from~\cite{A10,GRW12}, namely:
construct two isomorphic copies $\GG_1, \GG_2$ of $\GG$
and form the disjoint union
\[
2\GG = \GG_1 \dot{\cup}\, \GG_2
\]
If the state space for $\GG$ is $P$ then that for $2\GG$ is $P \times P$.
Take coordinates $(x,y)$ on $P \times P$ where $x = (x_1, \ldots, x_n)$ and
$y = (y_1, \ldots,y_n)$.
We recall some
simple properties of the {\em doubled network} $2\GG$ that are
proved in~\cite{GRW12}.

\begin{lemma}
\label{L:2Gadmiss}
Let $f$ be $\GG$-admissible. Then (with obvious identifications) $(f,f)$ is $2\GG$-admissible,
and all $2\GG$-admissible maps are of this form.
\end{lemma}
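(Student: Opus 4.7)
The strategy is to read off the admissibility structure of $2\GG$ directly from that of $\GG$ via Proposition~\ref{P:char_admiss}, and verify both directions of the claimed equivalence by matching components.

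First I would set up the node- and arrow-type structure of $2\GG$. Since $\GG_1, \GG_2$ are disjoint copies, there are no arrows between them, so for any node $c \in \CC_i$ its input tuple $T(c)$ in $2\GG$ consists entirely of nodes in $\CC_i$. Because $\GG_1 \cong \GG \cong \GG_2$, each node $c \in \CC_1$ is input isomorphic (in $2\GG$) to its mate $c' \in \CC_2$, and the vertex group $B(c,c)$ in $2\GG$ coincides with its counterpart $B(c',c')$ under this identification. Within each copy, input equivalence is inherited unchanged from $\GG$. With arrows in a standard order compatible with the isomorphism $\GG_1 \cong \GG_2$, all transitional maps between $c$ and $c'$ are identities.

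Next, for the forward direction, suppose $F : P \times P \to P \times P$ is $2\GG$-admissible, with components $F_c$ for $c \in \CC_1$ and $F_{c'}$ for $c' \in \CC_2$. By Proposition~\ref{P:char_admiss}(a), each $F_c$ and $F_{c'}$ is $B(c,c)$-invariant; by the domain condition, $F_c$ depends only on $(x_c, x_{T(c)})$ and $F_{c'}$ only on $(y_{c'}, y_{T(c')})$. By Proposition~\ref{P:char_admiss}(b), the input equivalence $c \sim_I c'$ in $2\GG$ forces $F_c$ and $F_{c'}$ to be given by the same function, call it $f_c$. The residual intra-copy input equivalences and vertex invariances then say exactly that $f = (f_c)_{c \in \CC}$ is $\GG$-admissible. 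Hence $F = (f,f)$.

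For the converse, given $\GG$-admissible $f$, define $F = (f,f)$. The domain condition for $F$ on each copy reduces to the corresponding domain condition for $f$ on $\GG$, which holds. The pullback conditions split into two types: those coming from input isomorphisms within a single copy, which hold because $f$ satisfies them on $\GG$; and those from the new cross-copy isomorphisms $c \leftrightarrow c'$, which hold trivially because by construction $F_c$ and $F_{c'}$ are the same function $f_c$. Thus $F$ is $2\GG$-admissible.

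The proof is essentially a bookkeeping exercise, and I do not expect any step to be a genuine obstacle — the only point that deserves care is verifying that input equivalence in $2\GG$ consists exactly of the intra-copy classes from $\GG$ together with the pairing $c \leftrightarrow c'$, and that no unexpected cross-copy arrows or additional vertex group elements arise. Given the disjoint union structure, this is immediate.
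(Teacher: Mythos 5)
Your proof is correct. The paper itself gives no argument for this lemma, simply citing \cite[Lemma 4.3]{GRW12}; your direct verification via Proposition~\ref{P:char_admiss} --- checking that input equivalence in $2\GG$ is the union of the intra-copy classes with the cross-copy pairings $c \leftrightarrow d'$ for $c \sim_I d$, that vertex groups are unchanged, and that the cross-copy pullback conditions force $F_c = F_{c'}$ --- is a sound self-contained substitute for that citation.
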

\begin{proof}
See \cite[Lemma 4.3]{GRW12}.
\qed\end{proof}

Assume as usual an admissible ODE~\eqref{E:admiss_ODE} on $P$ for $\GG$. 
This induces an admissible ODE on $P$ for $2\GG$ of the form:
\begin{equation}
\label{E:ffODE}
\dot{x} = f(x) \qquad \dot{y} = f(y)
\end{equation}

The dynamics of $\GG_1$ and $\GG_2$ are {\em decoupled}, so
a periodic state $\X = \{x(t)\}$ for $f$ on $\GG$ gives rise to a 
$2$-torus $\T^2$ for $(f,f)$ on $2\GG$, foliated by periodic orbits
\begin{equation}
\label{E:Xtheta}
\X_\theta= \{ (x(t), x(t+\theta T)) : t \in \R \}
\end{equation}
where $\theta \in\Sone$.  We call $\X_\theta$ a {\em $\theta$-sheared} periodic orbit.
     
The following result
 converts rigid phase relations on $\GG$ into rigid synchronies on $2\GG$;
see~\cite[Section 4]{GRW12}. The proof is immediate.
\begin{lemma}
\label{L:phase->synch}
Nodes $c,d$ are rigidly phase-related by $\theta$ in $\GG$, with $c$ corresponding to $c_1$
in $\GG_1$ and $d$ corresponding to $d_2$ in $\GG_2$, if and only if $c_1, d_2$ are
rigidly synchronous on $\X_\theta$.
\qed\end{lemma}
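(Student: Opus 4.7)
The plan is to unpack the two sides of the ``iff'' and observe that, via Lemma~\ref{L:2Gadmiss}, they are literally the same identity. First I would note that by the explicit description \eqref{E:Xtheta} of $\X_\theta$, the $c_1$-coordinate along $\X_\theta$ is $x_c(t)$ and the $d_2$-coordinate is $x_d(t+\theta T)$. So synchrony of $c_1$ and $d_2$ on $\X_\theta$ is by definition $x_c(t)\equiv x_d(t+\theta T)$, which is precisely the defining equation~\eqref{E:phase_eq} for $c,d$ being phase-related by $\theta$ in $\GG$. Thus the unperturbed statements match tautologically.

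Next I would handle rigidity by translating perturbations through Lemma~\ref{L:2Gadmiss}: every $2\GG$-admissible perturbation of $(f,f)$ has the form $(p,p)$ for a $\GG$-admissible $p$, and the perturbed doubled ODE \eqref{E:ffODE} decouples into two independent copies of $\dot x = f(x)+p(x)$. If $\tilde x(t)$ is the locally unique perturbed hyperbolic periodic orbit in $\GG$ with period $\tilde T$, then
\[
\tilde\X_\theta = \{(\tilde x(t),\tilde x(t+\theta \tilde T)) : t\in\R\}
\]
is a periodic orbit of the perturbed doubled ODE, and is the canonical choice of perturbed orbit within the perturbed 2-torus alluded to in Section~\ref{S:IBC}. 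With this canonical choice fixed, rigid synchrony of $c_1,d_2$ on $\X_\theta$ is by definition the identity $\tilde x_c(t)\equiv \tilde x_d(t+\theta\tilde T)$ for all admissible $p$ with $\|p\|_1$ small enough, which is exactly condition~\eqref{E:rigidphase}. Both directions of the biconditional then collapse to the same statement.

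The only delicate point, and the one I would spell out if anything, is the legitimacy of the ``canonical choice'' $\tilde\X_\theta$, since $\X_\theta$ is not hyperbolic in $2\GG$ (it sits in a 2-torus of periodic orbits of $(f,f)$). The justification is that the doubled ODE is totally decoupled, so a periodic orbit of the perturbation is determined by a pair of solutions of $\dot x=f(x)+p(x)$; requiring both components to lie on the same hyperbolic perturbed orbit $\tilde x(t)$ reduces the freedom to a single real parameter (the relative time shift), and fixing that parameter to be $\theta\tilde T$ singles out $\tilde\X_\theta$ uniquely. Once this is recorded the remainder is just definition-chasing, which is why the lemma is ``immediate''.
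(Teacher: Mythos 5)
Your proposal is correct and matches the paper's treatment: the paper simply declares the proof ``immediate'' because, exactly as you observe, both sides unpack to the identity $\tilde x_c(t)\equiv\tilde x_d(t+\theta\tilde T)$ once the canonical perturbed orbit $\tilde\X_\theta$ is fixed via Lemma~\ref{L:2Gadmiss} and the decoupled form of~\eqref{E:pert_ffODE}. Your extra care about the non-hyperbolicity of $\X_\theta$ and the canonical choice of $\tilde\X_\theta$ is consistent with how the paper handles the same point in the surrounding discussion of quasi-hyperbolicity.
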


The method of proof assumes that $\GG$ has the Rigid Synchrony Property for
strongly hyperbolic $\X$, and uses properties of $\X_\theta$, where $\theta$ is the
assumed rigid
phase relation. The idea is to deduce that $\X_\theta$ has the Rigid 
Synchrony Property for the periodic orbit on $2\GG$. Then Lemma~\ref{L:phase->synch}
yields the Rigid Synchrony Property for $\X$ on $\GG$.
However, the Rigid Synchrony Property for $\X_\theta$ is not immediate
because the foliation by tori is an obstacle to hyperbolicity. 
To avoid this obstacle we establish a modified version of the 
Rigid Synchrony Property for $\X_\theta$ on $2\GG$.
This is achieved as follows.

\vspace{.1in}
\noindent
{\em Proof of Theorem~{\rm \ref{T:LRPP}}}:

(a) Assume a rigid phase relation
\[
x_c(t) = x_d(t+\theta T) \qquad t \in J
\]
where either $c \neq d$ or $c=d$ and $\theta \not\equiv 0 \pmod {T}$.
Assume for a contradiction that this relation does not extend to the
corresponding input tuples.

(b) Assume that $\X$ is strongly hyperbolic (or is locally Kupka-Smale or
has the strong isolation property). This implies that $\GG$ has the
Rigid Synchrony Property for $\X$ but in principle is a stronger condition.

(c) Consider the corresponding admissible ODE \eqref{E:ffODE} for $2\GG$.
Let $\X_\theta$ be defined by \eqref{E:Xtheta}, and consider
an admissible perturbation of the form $(\eps p,\eps p)$ where $p$ is
admissible for $\GG$ and $C^1$-bounded (which follows if
$p$ is compactly supported). The perturbed ODE has the form
\begin{equation}
\label{E:pert_ffODE}
\dot{x} = f(x)+\eps p(x)  \qquad \dot{y} = f(y)+\eps p(y)
\end{equation}

If the phase shift $\theta$ is rigid, the
unique perturbed periodic orbit $\tilde\X = \{(\tilde x^\eps(t), \tilde x^\eps(t+\theta \tilde T)\}$
satisfies
\[
\tilde x_c(t) = \tilde x_d(t+\theta \tilde T) \qquad t \in J
\]
By Lemma~\ref{L:phase->synch}, this corresponds to a rigid
synchrony relation for $2\GG$.
 
Theorem~\ref{T:localRSC} does not apply directly, as noted above, but we can
use the same method of proof with extra conditions. The proof
has three key ingredients:

(a) There exists a generic point $x(t)$ on the periodic orbit with $t \in J$.

(b) There is a conflicting component of the ODE; that is, one that is formally
inconsistent with the equation
\[
f_c(x_c,x_{T(c)}) = f_d(x_d(t+\theta T), x_{T(d)} (t+\theta T))
\]
in the sense that either $d \not\sim c$ or
$x_{T(c)}$ and $x_d(t+\theta T)$ do not lie in the same $B(c,c)$-orbit. 

(c) $\X^\RR$ is hyperbolic, if necessary after a pre-perturbation of $f$.

If we can arrange analogous statements for $\X_\theta$ on $P \times P$,
the proof goes through and the resulting contradiction establishes
Theorem~\ref{T:LRPP}. 

A useful simplifying step is to form the quotient $\GG/{\bowtie}$ where $\bowtie$
is the relation of (local or global) rigid synchrony, which we know is balanced.
Now nodes of $\GG/{\bowtie}$ are synchronous if and only if they are identical.
Replacing $\GG$ by this quotient (and renaming this $\GG$), we may
assume that the only synchrony relations for $\X_\theta$ on $2\GG$
are those of the form $x_c(t) \equiv x_d(t+\theta)$; that is, between $P \times \{0\}$
and $\{0\}\times P$.

Condition (a) is straightforward, except perhaps when $x_1(t) \equiv x_1(t+\theta)$,
so the orbit on node 1 has period $\theta < T$. (This could, for example, occur for a
multirhythm on node 1.) This possibility corresponds to case (c) of the proof
of Lemma~\ref{L:p_exists}, and is dealt with in the same manner.
We are assuming that node 1 does not satisfy the
Rigid Phase Property, so $x_{[T(1)]}(t_0)$ and $x_{[T(1]}(t_0+\theta)$ lie
in distinct $B(1,1)$-orbits. Therefore the required conditions on $p_1$ can be
satisfied.

Condition (b) is immediate because we are assuming, for a contradiction,
that a formal inconsistency occurs.

Condition (c) can be dealt with by working only with
sheared periodic orbits for $2\GG$. We need:

\begin{definition}\em
\label{D:theta_hyp}
A periodic orbit $\X_\theta$ is {\em quasi-hyperbolic} if all of its
Floquet multipliers are off the unit circle except for two that equal 1.
Of these, one is associated with a phase shift along $\X_\theta$,
while the other is associated with a change from $\X_\theta$
to $\X_\phi$ where $\phi \in \Sone$ with $\phi$ near $\theta$ and $\phi \neq \theta$.
\qed\end{definition}

\begin{definition}\em
\label{D:quasiP}
Let $\PP$ be one of the properties `strongly hyperbolic', `stably isolated',
`locally Kupka-Smale' for $\X$. Then the property {\em quasi-$\PP$} for
$\X_\theta$ is defined for
`strongly hyperbolic' and `locally Kupka-Smale'
by replacing `hyperbolic' by `quasi-hyperbolic'
in the definition of $\PP$. For `stably isolated' it is defined by
being stably isolated except for nearby periodic orbits $\X_\phi$.
\qed\end{definition}

Now condition (c) follows from:
\begin{lemma}
\label{L:G->2G}
Let $\PP$ be one of the properties `strongly hyperbolic', `stably isolated', or
`locally Kupka-Smale'. If $\X$ has property $\PP$ for~\eqref{E:admiss_ODE}, 
then $\X_\theta$ has the property quasi-$\PP$ for~\eqref{E:ffODE}.
\end{lemma}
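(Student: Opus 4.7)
The plan is to exploit the product structure of the decoupled ODE~\eqref{E:ffODE}: by Lemma~\ref{L:2Gadmiss} every $2\GG$-admissible map has the diagonal form $(f,f)$, the flow is $\Phi_t=(\phi_t,\phi_t)$, and admissible perturbations $(p,p)$ act simultaneously on both copies. Spectral questions about $\X_\theta$ therefore reduce by a doubling procedure to spectral questions about $\X$.

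I would first establish quasi-hyperbolicity directly from the product structure. The time-$T$ monodromy along $\X_\theta$ is the block-diagonal operator $\mathrm{D}\phi_T|_{x(t_0)}\oplus\mathrm{D}\phi_T|_{x(t_0+\theta T)}$, whose two blocks are conjugate via flow transport along $\X$, so the Floquet multipliers of $\X_\theta$ are those of $\X$ each with multiplicity doubled. Hyperbolicity of $\X$ yields one simple multiplier $1$ (with eigenvector $f(x(t_0))$) and the rest off the unit circle, so $\X_\theta$ has exactly two unit multipliers and the rest off the circle. Their eigenvectors are the tangent $(f(x(t_0)),f(x(t_0+\theta T)))$ along $\X_\theta$ and a transverse shear such as $(f(x(t_0)),0)$, whose flowout slides $\X_\theta$ inside the $2$-torus $\X\times\X$ to nearby $\X_\phi$, matching Definition~\ref{D:theta_hyp}.

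For the quasi-strongly-hyperbolic clause, let $\bowtie'$ be a colouring of $2\GG$ with $\X_\theta\subseteq\Delta_{\bowtie'}$. Its restriction $\bowtie_i$ to $\GG_i\cong\GG$ is a colouring that $\X$ satisfies, and each cross-copy identification in $\bowtie'$ encodes a phase-shift relation on $\X$. Choose representatives $\RR'$ for $\bowtie'$, preferring copy~$1$ whenever a colour meets both copies, and let $\RR_i\subseteq\CC$ be the induced representative sets for $\bowtie_i$. Strong hyperbolicity of $\X$ for $\bowtie_1$ and $\bowtie_2$ (applied in succession, cf.\ Remark~\ref{R:manypert}) supplies an arbitrarily small admissible perturbation of $f$ under which $\X^{\RR_i}$ is hyperbolic in the induced ODE on $\GG^{\RR_i}$ for each $i$; the corresponding perturbation of $(f,f)$ is of the form $(p,p)$, hence $2\GG$-admissible. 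After reordering coordinates by copy, the linearisation of the induced ODE on $(2\GG)^{\RR'}$ along $\X_\theta^{\RR'}$ is a direct sum of the two hyperbolic monodromies from $\GG^{\RR_i}$: a cross-copy identification $(c,1)\sim(d,2)$ is rigidly enforced on $\X_\theta$ by $x_c(t)\equiv x_d(t+\theta T)$, so the two coordinates coincide on $\X_\theta^{\RR'}$ and the apparent off-diagonal couplings are bookkeeping repetitions of entries already present in the diagonal blocks. Hence $\X_\theta^{\RR'}$ has exactly two unit Floquet multipliers (tangent and shear) and all others off the unit circle, establishing quasi-hyperbolicity. The stably-isolated and locally-Kupka-Smale cases follow by the same doubling argument, with the one-parameter family $\{\X_\phi:\phi\in\Sone\}$ accounting for the ``except for nearby periodic orbits'' clause in Definition~\ref{D:quasiP} and Lemma~\ref{L:LKS}(b) applied separately in each copy.

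The main obstacle I anticipate is making rigorous the claim that cross-copy identifications in $\bowtie'$ introduce only spectrally inert off-diagonal coupling in the induced ODE on $(2\GG)^{\RR'}$; this requires carefully unwinding the definitions of $(2\GG)^{\RR'}$ and its admissible maps to verify that each such identification really does amount to coordinate duplication compatible with the diagonal product structure inherited from $(f,f)$, rather than genuine cross-block dynamical coupling.
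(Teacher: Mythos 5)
Your second paragraph, establishing quasi-hyperbolicity of $\X_\theta$ in $P\times P$ from the block-diagonal monodromy of the product flow, is correct and is essentially the paper's argument. For the quasi-strongly-hyperbolic clause you go further than the paper's own (very terse) proof, which in effect treats only colourings of the form $\bowtie\sqcup\bowtie$ with no cross-copy identifications and concludes from the block structure of $\X_\theta^\RR$ in $P^\RR\times P^\RR$. You are right to single out cross-copy identifications, since that is the case that actually arises in the application to the Rigid Phase Property.

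However, the central claim in your third paragraph does not hold. When $(c,1)\bowtie'(d,2)$ and the representative for that colour is chosen in copy 1, the induced ODE on $(2\GG)^{\RR'}$ gives any unmatched copy-2 node with $(d,2)$ as a tail node the variable $u_{(c,1)}$ as a genuine input. Linearising along $\X_\theta^{\RR'}$ produces a nonzero copy-1-to-copy-2 Jacobian block; with coordinates ordered by copy the fundamental system is block \emph{lower-triangular}, not block-diagonal. Block-triangularity still gives the Floquet multipliers of $\X_\theta^{\RR'}$ as the union of those of the two diagonal blocks, but the lower-right block is the monodromy of a forced subsystem on the \emph{unmatched} copy-2 variables, obtained by deleting some rows and columns of $\mathrm{D}f(x(t+\theta T))$. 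Its spectrum is not a priori controlled by hyperbolicity of $\X$ or of $\X^{\RR_1}$. ``Bookkeeping repetitions'' is a hope, not an argument: coincidence of \emph{values} on the orbit does not make the Jacobian cross-terms vanish in the linearisation. So the obstacle you flag in your final paragraph is a genuine gap in your argument, not a routine verification; the paper's own proof does not address it explicitly either, so you have uncovered a real subtlety rather than missed a standard step. Two smaller points: your $\RR_2$ (the copy-2 part of $\RR'$) is not a set of representatives for $\bowtie_2$ whenever a colour meets both copies, since that colour's representative lies in $\RR_1$, so ``the induced ODE on $\GG^{\RR_2}$'' is not well-defined in the sense of Definition~\ref{D:QQ}; and $\bowtie_1=\bowtie_2$ automatically, because a synchrony pattern is invariant under time-shift of the orbit, so your two successive appeals to strong hyperbolicity collapse to one.
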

\begin{proof}
First, observe that $2\GG$-admissible perturbed families have the form
\eqref{E:pert_ffODE}.

{\em Strongly hyperbolic}: There is a perturbation $p$ such that $\{\tilde x^\RR(t)\}$ is
hyperbolic on $P$. Since $\tilde x(t+\theta T)$ is a phase-shifted copy of $\tilde x(t)$,
the orbit $\{\tilde x^\RR(t+\theta)\}$ is hyperbolic on $P$ for the same perturbation $p$.
Now all Floquet multipliers of $\X^\theta$ lie off the unit circle except
for two multipliers that are equal to 1: one for the first component $\{\tilde x(t)\}$
and the other for the second component $\{\tilde x(t+\theta T)\}$. Restricting
to any sheared periodic orbit (which includes $\tilde\X_\theta$) reduces
these to one Floquet multiplier equal to 1 and the rest off the unit circle.
Therefore $\X^\RR_\theta$ is quasi-hyperbolic.

{\em Stably isolated}: If $\{\tilde x(t)\}$ is stably isolated on $P$ for a small perturbation $p$,
then since $\tilde x(t+\theta T)$ is a phase-shifted copy of $\tilde x(t)$,
the orbit
$\{\tilde x^\RR(t+\theta)\}$ is stably isolated on $P$ for the same perturbation $p$.
Therefore $\tilde\X_\theta$ is quasi-stably-isolated on $P\times P$.

{\em Locally Kupka-Smale}: The proof is similar to case (a).
\qed\end{proof}

This completes the proof of Theorem~\ref{T:LRPP}.

\begin{remark}\em
The example of Section~\ref{S:3NE} sheds light on the requirement of rigidity. Among the
admissible maps~\eqref{e:3nodeODE}, consider the `non-generic' case
when $g = f$. The ODE then has $\Z_3$ symmetry, and therefore
supports rotating waves with $\sot$-period phase shifts --- see for
example~\cite[Chapter XVIII Section 0]{GSS88}, and
\cite[Section 4.8]{GS02}. These phase shifts would be rigid if the dashed
arrow in Figure~\ref{F:3node_ring} had the same arrow-type as the solid ones. However, since
this is not the case, a perturbation of the form $(p,p,q)$ can
(and, as we proved above, does) change the phase shifts, so that
they are no longer one third of the period.
\qed\end{remark}

\section{Full Oscillation Property}
\label{S:FOC}

We now deduce the Full Oscillation Property
for strongly hyperbolic $\X$. 
Following Gandhi \etal~\cite{GGPSW19}, say that node $d$ is 
{\em upstream} from node $c$ if there is a directed
path in $\GG$ from $d$ to $c$.
We prove a stronger local version of the property: if some node of a network
is rigidly steady for some non-empty open interval of time $J$,
then all upstream nodes are also steady for $t \in J$. The global
version follows immediately since `oscillate' is local in time.

A node is always synchronous with itself. In the
proof of the Local Rigid Synchrony Property, we 
do not assume that $\X$ is oscillating at any particular node; only
that the overall orbit is periodic (not steady) and nodes 1 and $m+1$ are distinct. 

Phase relations are different. A node
can be phase-related to {\em itself} in a nontrivial manner; indeed, this is precisely what
happens in multirhythms. 
A multirhythm occurs when some nodes oscillate
at frequencies rationally related to the overall period, because the phase pattern
requires those nodes to oscillate as nontrivially 
phase-shifted copies of themselves. This phenomenon
goes back to~\cite[Chapter XVIII Section 0]{GSS88}, 
and is discussed in~\cite[Section 3.6]{GS02}. 

The proof of the Rigid Phase Property
allows multirhythms, because the two copies of such a node 
are distinct in $2\GG$. In fact, if node $c$ experiences a multirhythm, rigidly,
then Theorem~\ref{T:LRPP} implies that $x_{T(c)}$ is invariant under the same phase shift,
up to some input automorphism in $\beta \in B(c,c)$. That is,
\begin{equation}
\label{E:phaselift}
x_c(t) \equiv x_c(t+\theta) \implies x_{T(c)}(t) \equiv \beta^*x_{T(c)}(t+\theta)
\end{equation}
The input
automorphism $\beta$ is essential here. Indeed, without some such automorphism,
the phase shift $\theta$ would propagate back through the entire (transitive) network and imply that
all nodes oscillate with the same {\em minimal} period. This is false for
multirythms, and is why they are interesting.

\begin{definition}\em
\label{D:rig_steady} 
Let $\X = \{x(t)\}$ be a hyperbolic periodic orbit of a network ODE.
A node $c$ is {\em rigidly steady} on a non-empty open set
$J \subseteq\, \R$ if $\tilde{x}_c$ is steady (that is, $\tilde x_c(t)$ is constant)
for all sufficiently small perturbed periodic orbits $\tilde{\X} = \{\tilde{x}(t)\}$.
\qed\end{definition}

The proof of the Local Rigid Phase Property makes no extra assumptions about 
the phase shift $\theta$,
except that it is rigid. In particular, other phase relations are also permitted
(and occur in multirhythms). The connection with the Full Oscillation Property
arises because equilibria can be viewed as extreme cases of multirhythms:
\begin{lemma}
\label{L:anyphase}
Let $\X=\{x(t)\}$ be a periodic state with period $T$, and let $J$
be a non-empty open subset of $\R$. Then $x_c(t)$ is an equilibrium for $t\in J$
 if and only if
\[
x_c(t) = x_c(t+\theta T) \quad \forall \theta \in \Sone = \R/\Z, t \in J
\]
\qed\end{lemma}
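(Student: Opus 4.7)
The plan is to prove the two directions of the equivalence separately, with the substantive content being a periodicity argument in the backward direction.

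For the forward direction ($\Rightarrow$), the task is nearly tautological. If $x_c$ is an equilibrium, meaning $x_c(t) \equiv v$ for some fixed $v$, then for every $\theta \in \Sone$ and every $t \in J$ both quantities $x_c(t)$ and $x_c(t+\theta T)$ are equal to $v$, so equality holds. No use is made of $J$ or of $T$-periodicity here.

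For the backward direction ($\Leftarrow$), the key is that the hypothesis is stated for \emph{all} phase shifts $\theta \in \Sone$, which is enough, together with $T$-periodicity of $\X$, to upgrade local information to a global statement. Specifically, fix any $t_0 \in J$ and set $v = x_c(t_0)$. The hypothesis gives $x_c(t_0 + \theta T) = v$ for every $\theta \in \Sone$. As $\theta$ ranges over a set of representatives $[0,1) \subseteq \Sone$, the point $t_0 + \theta T$ sweeps out the half-open interval $[t_0, t_0 + T)$. Hence $x_c(s) \equiv v$ for all $s$ in an interval of length $T$. Applying $T$-periodicity of the orbit $\X$, this forces $x_c(s) \equiv v$ on all of $\R$, so $x_c$ is a (global) equilibrium.

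I do not anticipate a serious obstacle; the only point requiring care is the interpretation of the phrase ``$x_c(t)$ is an equilibrium for $t \in J$'' as meaning that $x_c$ is globally constant (as opposed to merely constant on $J$), which is consistent with Definition~\ref{D:rig_steady} and is in any case forced by the backward direction via periodicity. No ODE structure, admissibility, or hyperbolicity is invoked — the lemma is a purely kinematic consequence of $T$-periodicity, which is why it can serve as the clean link between ``rigidly steady'' (synchrony of a node with itself across no phase shift) and ``phase-related to itself by every phase shift,'' feeding Theorem~\ref{T:LRPP} into the Full Oscillation Property.
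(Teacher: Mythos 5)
Your proof is correct. The paper supplies no argument for this lemma --- the statement is closed with a \verb|\qed| immediately, signalling it is regarded as obvious --- so there is no ``paper proof'' to compare against, and yours is exactly the argument the author intends the reader to supply. Your backward direction (letting $\theta$ sweep $[0,1)$ so that $t_0+\theta T$ covers a full fundamental interval $[t_0,t_0+T)$, then invoking $T$-periodicity of $\X$ to propagate constancy to all of $\R$) is the one nontrivial observation. You also correctly flag the one ambiguity: the phrase ``$x_c(t)$ is an equilibrium for $t\in J$'' must be read as global constancy of the node coordinate, not merely constancy on $J$, for the forward implication to hold as written; this is consistent with Definition~\ref{D:rig_steady} and with the way the lemma is applied in the proof of Lemma~\ref{L:upstream}, and, as you note, the backward direction forces this reading anyway. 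No gap.
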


As observed in~\cite[Section 2]{GRW12}, in order to prove the Local Full Oscillation Property it is enough to
prove that a small enough perturbation makes at least one additional node oscillate.
When $\GG$ is transitive, iterating with smaller and smaller perturbations,
as in Remark~\ref{R:manypert}, 
makes all nodes oscillate, because `node $c$ oscillates on $J$' is an open property. 

We prove a stronger result.
The key observation is:

\begin{lemma}
\label{L:upstream}
Suppose that $\GG$ has the Local Rigid Phase Property and node $c$ is rigidly steady
for $t \in J$.
Then every node $d$ upstream from $c$ is also rigidly steady for $t \in J$.
\end{lemma}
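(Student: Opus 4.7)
\proof[Plan]
The plan is to prove the one-step version first: if $c$ is rigidly steady on $J$, then every node $e$ that is the tail of an arrow in $I(c)$ is also rigidly steady on $J$. The lemma then follows by induction on the length of a directed path $d \to \cdots \to c$ from an upstream node $d$ to $c$, since the Local Rigid Phase Property is assumed for all of $\GG$, not just at $c$.

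For the one-step claim, first translate steadiness into self-phase-relations using Lemma~\ref{L:anyphase}: rigid steadiness of $c$ on $J$ means that for all sufficiently small admissible perturbations, $\tilde x_c(t) \equiv \tilde x_c(t + \theta \tilde T)$ for every $\theta \in \Sone$ and every $t \in J$. For each fixed $\theta$, this is a rigid phase relation of $c$ with itself, so the Local Rigid Phase Property (Theorem~\ref{T:LRPP}) yields some $\beta_\theta \in B(c,c)$ with
\[
x_{T(c)}(t) \equiv \beta_\theta^* \, x_{T(c)}(t + \theta T) \qquad \forall t \in J.
\]
Because $B(c,c)$ preserves arrow type, this relation restricts within each arrow-type block: if $a_1,\dots,a_k$ are the arrows of a given type in $I(c)$ with tails $e_1,\dots,e_k$, then the tuple $(x_{e_1}(t+\theta T),\dots,x_{e_k}(t+\theta T))$ is a permutation of $(x_{e_1}(t),\dots,x_{e_k}(t))$.

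Now fix any $t \in J$ and any direct input $e = e_i$. The map
\[
\theta \;\longmapsto\; x_e(t + \theta T)
\]
is continuous from $\Sone$ into $P_e$, and the previous step forces its image to lie in the \emph{finite} set $\{x_{e_1}(t),\dots,x_{e_k}(t)\} \subseteq P_e$. A continuous map from the connected space $\Sone$ into a finite (hence discrete) subset of a Euclidean space is constant, so $x_e(t + \theta T) \equiv x_e(t)$ for all $\theta$. Since $\{t + \theta T : \theta \in \Sone\} = \R$, the node variable $x_e$ is constant on $\R$, in particular on $J$. The same reasoning applied to $\tilde\X$ for each sufficiently small admissible perturbation gives steadiness of $\tilde x_e$ on the corresponding interval, so $e$ is rigidly steady in the sense of Definition~\ref{D:rig_steady}. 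Iterating along any directed path from $d$ to $c$ finishes the proof.

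The main conceptual step is the continuity-plus-finiteness argument in the third paragraph; everything else is bookkeeping. The only subtle point is the need to invoke the Local Rigid Phase Property separately for each $\theta \in \Sone$ (rather than choosing a single $\beta$ that works uniformly), which is legitimate because rigid steadiness of $c$ supplies a rigid self-phase relation for \emph{every} $\theta$. No uniform choice of $\beta_\theta$ is required, since we extract information about each individual $x_e$ rather than about the $\beta_\theta$'s themselves.
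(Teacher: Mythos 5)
Your proof is correct, and its skeleton matches the paper's: convert rigid steadiness of $c$ into a rigid self-phase relation for \emph{every} $\theta \in \Sone$ via Lemma~\ref{L:anyphase}, apply the Local Rigid Phase Property at $c$, deduce that the input nodes are steady, upgrade to rigidly steady, and induct upstream along directed paths. The decisive step, however, is handled genuinely differently. The paper takes the vertex-group element $\beta$ supplied by Theorem~\ref{T:LRPP}, uses finiteness of $B(c,c)$ to get $\beta^k=\id$, and iterates the phase relation $k$ times to eliminate $\beta$ and conclude $x_{T(c)}(t) \equiv x_{T(c)}(t+k\theta T)$; since $k\theta$ still sweeps out $\Sone$, Lemma~\ref{L:anyphase} applies again. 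As written, that argument treats $\beta$ as independent of $\theta$ and iterates the relation at times $t+j\theta T$ that may leave $J$ --- points the paper passes over. Your continuity-plus-finiteness argument --- for fixed $t\in J$ and fixed input $e$, the continuous map $\theta \mapsto x_e(t+\theta T)$ on the connected space $\Sone$ takes values in the fixed finite set $\{x_{e_1}(t),\ldots,x_{e_k}(t)\}$ determined by the arrow-type block, hence is constant and equal to its value $x_e(t)$ at $\theta=0$ --- extracts the conclusion componentwise and never needs a choice of $\beta_\theta$ uniform in $\theta$, so it is arguably the tighter route. Your direct verification of rigidity (running the identical argument on each sufficiently small perturbed orbit $\tilde\X$) replaces the paper's appeal to Remark~\ref{R:manypert} at that point and is equally valid. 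Both arguments share the same reliance on Lemma~\ref{L:anyphase} to pass between ``steady on $J$'' and ``self-phase-related by all $\theta$ on $J$'', so neither is more nor less exposed on that front.
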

\begin{proof}
We prove that any input node $d \in T(c)$ is steady for $t \in J$. That is is rigidly 
steady then follows by Remark~\ref{R:manypert}.

Since node $c$ is rigidly steady, any sufficiently small perturbation
creates a unique perturbed periodic orbit $\tilde{\X}$ that is also steady at $c$.
Therefore by Lemma~\ref{L:anyphase},
\[
\tilde{x}_c(t) = \tilde{x}_c(t+\theta \tilde{T}) \quad \forall \theta \in \Sone
\]
where $\tilde{T}$ is the period of $\tilde{\X}$.
That is, the phase shift $\theta$ at node $c$ is rigid for all $\theta \in \Sone$.
(All phase shifts in $\Sone$ occur, but any particular one is preserved by perturbation,
and is distinguished by having that value of $\theta$ --- which is what the 
proof of Theorem~\ref{T:LRPP} requires.)
Theorem~\ref{T:LRPP} therefore implies that there exists $\beta \in B(c,c)$ such that
\[
x_{T(c)}(t) \equiv \beta^*x_{T(c)}(t+\theta T) \quad \forall \theta \in \Sone
\]
Since $B(c,c)$ is a finite group, $\beta^k = \id$ for some $k$. Then
\[
x_{T(c)}(t) \equiv (\beta^*)^k x_{T(c)}(t+k\theta T) 
	 \equiv x_{T(c)}(t+k\theta T) \quad \forall \theta \in \Sone.
\]
Now $k\theta$ ranges over the whole of $\Sone$ since $\theta$ does, so
by Lemma~\ref{L:anyphase}, $x_{T(c)}$ is in equilibrium. In particular
any node $d \in T(c)$ is in equilibrium. As noted at the start of the proof, node
$d$ is rigidly steady, so we can iterate. Since $\GG$ is finite, 
after finitely many steps we deduce, using Remark~\ref{R:manypert},
 that any given upstream node must be  rigidly steady.
\qed\end{proof}

\begin{theorem}
\label{T:FOC}
The Full Oscillation Property holds for all strongly hyperbolic periodic orbits.
\end{theorem}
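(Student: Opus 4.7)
The plan is to argue by contradiction using Theorem~\ref{T:LRPP} (Local Rigid Phase Property) together with Lemma~\ref{L:upstream} (upstream propagation) and the iteration technique of Remark~\ref{R:manypert}. Since $\X$ is strongly hyperbolic, Theorem~\ref{T:LRPP} supplies the Local Rigid Phase Property, which is precisely the hypothesis needed to apply Lemma~\ref{L:upstream} to $\X$ and to any sufficiently small admissible perturbation.

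Fix $\eps > 0$ and, for any hyperbolic periodic orbit $\Y$ of an admissible vector field, write $N(\Y)$ for the set of nodes at which $\Y$ is steady. The goal is to construct a single admissible perturbation $p$ with $\|p\|_1 < \eps$ such that $N(\tilde\X) = \emptyset$. Because non-constancy of a single coordinate is an open condition on a hyperbolic periodic orbit under $C^1$-small perturbation, $N$ can only shrink (or stay the same) under further small perturbations; hence it suffices to show that whenever $N(\Y) \neq \emptyset$ for a strongly hyperbolic periodic orbit $\Y$ of a transitive $\GG$, there is an arbitrarily small admissible perturbation which strictly reduces $|N|$. Iterating this reduction at most $|\CC|$ times, with successive perturbation sizes $\eps/2, \eps/4, \ldots$ as in Remark~\ref{R:manypert}, combines into a single perturbation of norm less than $\eps$ that extinguishes $N$.

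For the reduction, pick any $c \in N(\Y)$. Either (i) some arbitrarily small admissible perturbation causes node $c$ to oscillate in the new perturbed orbit, in which case $|N|$ decreases by at least one (since openness of oscillation protects all previously oscillating nodes), or (ii) node $c$ is rigidly steady in the sense of Definition~\ref{D:rig_steady}. In case (ii), Lemma~\ref{L:upstream} forces every node upstream from $c$ to be rigidly steady; transitivity of $\GG$ means that every node is upstream from $c$, so every node of $\Y$ is rigidly steady. But then $\Y$ itself would be an equilibrium, contradicting the assumption that $\Y$ is a (non-constant) periodic orbit. Hence case (ii) never occurs and the reduction step succeeds.

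The only points requiring care are that strong hyperbolicity of $\X$ persists through the iteration so that Theorem~\ref{T:LRPP} and hence Lemma~\ref{L:upstream} remain available at each stage, and that each intermediate perturbation is admissible and $C^1$-bounded so the telescoped sum fits inside the $\eps$-ball; both follow from openness of hyperbolicity and the Banach-space structure on admissible $C^1$-bounded maps discussed in Section~\ref{S:C1BM}. There is no substantive obstacle beyond these bookkeeping items: the entire content of the theorem is packaged into Lemma~\ref{L:upstream} via the Rigid Phase Property, with transitivity converting ``propagates upstream'' into ``propagates everywhere''.
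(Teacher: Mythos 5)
Your proposal is correct and follows essentially the same route as the paper: deduce from Theorem~\ref{T:LRPP} that Lemma~\ref{L:upstream} (upstream propagation of rigid steadiness) is available, observe that in a transitive network every node is upstream from $c$, and handle the reduction to the rigidly-steady case by iterating small perturbations via Remark~\ref{R:manypert} and openness of oscillation. The paper compresses your explicit reduction loop into the single sentence ``Suppose not. Then some transitive network has a hyperbolic periodic orbit that is rigidly steady at some node $c$'' (justified by the preceding paragraph referencing~\cite{GRW12}), but the mathematical content is identical.
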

\begin{proof}
Suppose not. Then some transitive network has a hyperbolic
periodic orbit that is rigidly steady at some node $c$. But
in a transitive network, every node is upstream from
$c$. Therefore every node is steady, so
the state is not periodic --- contradiction.
\qed\end{proof}

\section{Cyclic Automorphisms and the $H/K$ Theorem}
\label{S:CAGHKT}

It is known that if 
conjectures (a, b, c, d) are valid for a network $\GG$, which we have proved is the case
for strongly hyperbolic periodic orbits, then
there are important consequences for the combinatorial structure of $\GG$.
In Golubitsky \etal~\cite{GRW12} and~\cite{SP08} it is
proved that, on the assumption that
these conjectures are valid for a given network $\GG$, there is
a natural network analogue of the $H/K$ Theorem of Buono and Golubitsky~\cite{BG01};
see also Golubitsky and Stewart~\cite{GS02} and Golubitsky \etal~\cite{GMS16}.

In Section~\ref{S:rigidity} we mentioned that for
equivariant dynamics with symmetry group $\Gamma$,
the  $H/K$ Theorem characterises, for each $\Gamma$, 
the possible spatiotemporal patterns of 
periodic states $\X$ that can occur for suitable $\Gamma$-equivariant ODEs.
This characterisation is stated in terms of the the {\em spatiotemporal symmetry group} $H$,
which fixes $\X$ setwise, and the
{\em spatial symmetry group} $K \subseteq H$, which fixes $\X$ pointwise. 
It is easy to prove that $K \lhd H$ and (when $\Gamma$ is finite) 
the quotient group $H/K$ is cyclic and corresponds to phase shifts
through certain rational multiples of the period. The synchrony and phase patterns
determined by such subgroups 
$H$ and $K$ are always rigid~\cite[Corollary 3.7]{GS02}. 

For a network, the natural analogue of $K$ is a balanced
colouring $\bowtie$, determined by the synchrony pattern, and the natural analogue of
$H$ is the phase pattern. The crucial feature in common with the
equivariant $H/K$ Theorem is proved in~\cite{SP08} under the assumption that
$\GG$ has the Rigid Phase Property. Namely, when $\GG$ is transitive,
the existence of a rigid phase pattern implies
that the quotient network $\GG/{\bowtie}$ of $\GG$ by the synchrony colouring
$\bowtie$ has a cyclic symmetry group. Moreover, this symmetry group
implies the existence of a discrete rotating wave with the 
corresponding phase pattern. So rigid phase relations
on an arbitrary network occur if and only if they come from
a cyclic group symmetry on $\GG/{\bowtie}$. 

The same proof works if we assume only the Local Rigid Phase Property
on some interval $J$. We can also pass from a specific rigid phase relation
to the entire phase pattern in the sense of~\cite{SP07}, to establish:

\begin{theorem} \label{T:spatiotemporal}
Let $\GG$ be a transitive\index{transitive}
network, and assume that $\GG$ has the (global) Rigid Phase Property.
Suppose that there is a rigid phase pattern corresponding to a 
$T$-periodic state $x(t)$ of an admissible ODE.  
Then there is a balanced coloring
$\bowtie$ of $\GG$ and a symmetry $\gamma$ of the quotient network $\GG_{\bowtie}$,
generating a cyclic group $\Gamma \cong \Z_k$,
such that: 
\begin{itemize}
\item[\rm (a)] $x_c(t) \equiv x_d(t)$ if and only if $c \bowtie d$.   
\item[\rm (b)] For each pair
$(c,d)$ of nodes that are rigidly phase-related:
\[
x_c(t) \equiv x_d(t+\theta_{cd} T)
\]
we have $\theta_{cd} = \frac{m}{k}$ for some integer $m$.
Moreover, $\gamma\bar{c} = \bar{d}$ where $\bar{c},\bar{d}$ are 
the quotient nodes corresponding to $c,d$. 
\end{itemize}
\qed\end{theorem}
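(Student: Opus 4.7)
The plan is to first use the Rigid Synchrony Property to reduce to the quotient $\GG/\!\!\bowtie$, and then to extract a cyclic group action from the rigid phase relations on this quotient. By the results of Section~\ref{S:GRS}, the synchrony colouring $\bowtie$ defined by $c \bowtie d \iff x_c(t) \equiv x_d(t)$ is balanced, so $\GG/\!\!\bowtie$ is well defined and $\X$ projects to an admissible periodic orbit on it. In the quotient, distinct quotient nodes are never synchronous, so every rigid phase relation is between distinct quotient nodes.

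Next I would define a candidate symmetry action. Let $\Theta \subseteq \Sone$ be the set of $\theta$ for which there exist quotient nodes $\bar c, \bar d$ with $x_c(t) \equiv x_d(t+\theta T)$. For each $\theta \in \Theta$, set $\sigma_\theta(\bar c) = \bar d$ whenever this relation holds. The absence of nontrivial synchronies in the quotient forces $\sigma_\theta$ to be single-valued, and $\sigma_{-\theta}$ is a two-sided inverse, so $\sigma_\theta$ is a bijection on the node set of $\GG/\!\!\bowtie$. I would then invoke Theorem~\ref{T:LRPP}: for each $\theta$-phase-related pair $(\bar c, \bar d)$ there is a vertex-group element $\beta \in B(\bar c, \bar c)$ inducing an arrow-type-preserving bijection $I(\bar c) \to I(\bar d)$ that sends the tail of each input arrow to $\bar c$ onto a node $\theta$-phase-related to the corresponding tail at $\bar d$. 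Packaging these local input bijections together gives a network automorphism $\gamma_\theta$ of $\GG/\!\!\bowtie$ whose action on nodes agrees with $\sigma_\theta$.

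The cyclic structure then follows. Composing two such relations shows that $\Theta$ is closed under addition in $\Sone$, hence a subgroup. The assignment $\theta \mapsto \gamma_\theta$ embeds $\Theta$ into the finite group $\mathrm{Aut}(\GG/\!\!\bowtie)$ (distinct $\theta$ give distinct actions on nodes because transitivity of $\GG/\!\!\bowtie$ and the absence of nontrivial synchronies force the phase of every node to be determined by the phase of any one node). Hence $\Theta$ is a finite subgroup of $\Sone$, which is necessarily cyclic, say $\Theta \cong \Z_k$ with generator $1/k$. Defining $\gamma = \gamma_{1/k}$ yields a cyclic symmetry $\Gamma = \langle \gamma \rangle \cong \Z_k$ of the quotient, and every rigid phase shift $\theta_{cd}$ equals $m/k$ for some integer $m$, with $\gamma^m \bar c = \bar d$.

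The main obstacle I anticipate is the assembly step: verifying that the locally defined input bijections coming from Theorem~\ref{T:LRPP} can be patched into a single graph automorphism of $\GG/\!\!\bowtie$. One must check global consistency of the arrow-level bijection (every arrow is covered exactly once, the choice of vertex-group element $\beta$ at each node is compatible with the choice at its neighbours, and arrow-types are preserved). The freedom to adjust $\beta$ within $B(\bar c, \bar c)$, together with the requirement that the induced bijection be $\theta$-phase-equivariant, should pin down a coherent choice, but making this rigorous will require unwinding the groupoid structure of $\BB$ on the quotient and verifying that compositions $\gamma_{\theta_1} \compose \gamma_{\theta_2}$ induce the phase shift $\theta_1+\theta_2$ at the level of input arrows, not merely at the level of nodes.
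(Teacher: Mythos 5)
The paper does not actually prove Theorem~\ref{T:spatiotemporal}: the text preceding it cites~\cite{SP08,GRW12} for the argument, and the theorem is stated with an immediate \qed and no proof body. You have therefore attempted to reconstruct an argument that the paper treats as external. Your reconstruction does follow the general plan of the cited result (reduce to the quotient by the rigid synchrony colouring, package the phase relations as a node permutation, use the Rigid Phase Property to promote it to a network automorphism, observe that the resulting subgroup of $\Sone$ is finite hence cyclic), so it is not a genuinely different route; but it has one real gap and one overstated worry.

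The real gap is \emph{totality} of $\sigma_\theta$. You define $\sigma_\theta(\bar c) = \bar d$ whenever $x_c(t) \equiv x_d(t+\theta T)$, note it is single-valued and that $\sigma_{-\theta}$ inverts it, and conclude it is a bijection on the quotient node set. But membership of $\theta$ in $\Theta$ only guarantees that \emph{some} pair of quotient nodes is $\theta$-phase-related; nothing you have written shows that \emph{every} quotient node lies in the domain of $\sigma_\theta$. This matters twice: it is needed to show that $\sigma_\theta$ is a permutation of the whole node set (and not merely of a subset), and it is needed for the claim that $\Theta$ is closed under addition, since composing $\sigma_{\theta_1}$ with $\sigma_{\theta_2}$ requires $\sigma_{\theta_2}$ to be defined at the image of $\sigma_{\theta_1}$. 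The fix is exactly the propagation argument implicit in Theorem~\ref{T:LRPP}: if $\bar c, \bar d$ are $\theta$-phase-related, the pullback relation $x_{T(\bar c)}(t) \equiv \beta^* x_{T(\bar d)}(t+\theta T)$ shows that every tail node of $I(\bar c)$ is $\theta$-phase-related to a tail node of $I(\bar d)$; since $\GG/\!\!\bowtie$ inherits transitivity from $\GG$, every node is reached by following input arrows upstream, so $\sigma_\theta$ is total. (Incidentally, your remark that ``every rigid phase relation is between distinct quotient nodes'' is false: a multirhythmic node is nontrivially phase-related to itself, so $\sigma_\theta$ can have fixed points; this does not harm the argument but the sentence should be deleted.)

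Conversely, the obstacle you flag as the main difficulty --- patching the local input bijections into a global graph automorphism --- is not actually an obstacle. A network automorphism is precisely a node permutation $\sigma$ together with, for each node $\bar c$, an arrow-type-preserving bijection $I(\bar c) \to I(\sigma(\bar c))$ whose induced tail map agrees with $\sigma$. The automorphism axioms are local to each arrow (head consistency is automatic from indexing the bijections by head node, tail consistency is what you must check), so the choices at distinct head nodes are entirely independent and no cross-node compatibility is required. Theorem~\ref{T:LRPP} supplies, at each $\bar c$, a $\beta \in B(\bar c,\bar c)$ inducing a bijection $I(\bar c)\to I(\sigma_\theta(\bar c))$ which, because the quotient has no nontrivial synchronies, necessarily sends the tail of each arrow to its $\sigma_\theta$-image. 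Picking one such $\beta$ per node and assembling them already gives a valid automorphism; the question of whether $\gamma_{\theta_1}\compose\gamma_{\theta_2}=\gamma_{\theta_1+\theta_2}$ at the arrow level does not need to be resolved, because the theorem asserts only the \emph{existence} of a cyclic symmetry whose node action matches the phase pattern, and any cyclic group containing the node permutation $\sigma_{1/k}$ in its action suffices.
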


The converse is true with node spaces $\R^m$ for which $m \geq 2$,
by results of Josi\'c and T\"or\"ok~\cite{JT06}. The precise characterisation
of $H,K$ pairs for network ODEs remains open when $m = 1$ or node spaces are $\Sone$
(phase oscillators).

\begin{corollary}
\label{C:HKsh}
The above theorem holds if $\X$ is strongly hyperbolic.
\qed\end{corollary}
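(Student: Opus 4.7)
The plan is to combine the Local Rigid Phase Property (Theorem~\ref{T:LRPP}), which was established under the assumption of strong hyperbolicity, with the fact noted in the paragraph preceding Theorem~\ref{T:spatiotemporal} that its proof goes through assuming only the local version of the Rigid Phase Property on some non-empty open interval $J$. Since strong hyperbolicity guarantees the Local Rigid Phase Property and implies the Local (hence global) Rigid Synchrony Property by Corollary~\ref{C:global}, essentially every hypothesis invoked in the argument of~\cite{SP08} is available.

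More concretely, I would proceed as follows. First, apply Theorem~\ref{T:LRPP} to $\X$ to deduce that any rigid phase relation $x_c(t) \equiv x_d(t+\theta T)$ on an open interval $J$ lifts to a phase relation on the input tuples, modulo an element $\beta \in B(d,d)$. Second, apply Theorem~\ref{T:globalRSC} and Corollary~\ref{C:global} to replace the local rigid synchrony colouring by a balanced global colouring $\bowtie$ on all of $\X$, and pass to the quotient network $\GG/{\bowtie}$. Third, reproduce the combinatorial argument of~\cite[Theorem~5.2]{SP08}: the collection of rigid phase relations induces a well-defined map on the nodes of $\GG/{\bowtie}$ which, by input-preservation together with the Local Rigid Input Property (Corollary~\ref{C:LRIP}), is an automorphism $\gamma$; transitivity of $\GG$ and finiteness of the node set force $\gamma$ to generate a cyclic group $\Z_k$ and the phase shifts to be rational multiples $m/k$ of the period.

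The key technical point, and the main obstacle, is verifying that the local-to-global passage of Section~\ref{S:GRS} continues to work once phase shifts are in play. On the doubled network $2\GG$, a locally rigid phase relation for $\X$ corresponds by Lemma~\ref{L:phase->synch} to a locally rigid synchrony relation on the sheared orbit $\X_\theta$, but $\X_\theta$ is only quasi-hyperbolic (Lemma~\ref{L:G->2G}). One therefore has to check that the global-extension argument from Theorem~\ref{T:globalRSC} still applies to $\X_\theta$ under quasi-hyperbolicity: the semicontinuity argument and the flow-invariance of balanced polydiagonals carry over, provided one restricts attention to perturbations of the form $(\eps p, \eps p)$ so that the foliation by sheared orbits is preserved and the canonical choice of perturbed orbit remains $\tilde\X_\theta$.

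Once these pieces are in place, the conclusions (a) and (b) of Theorem~\ref{T:spatiotemporal} follow verbatim from the argument of~\cite{SP08}, now available for every strongly hyperbolic $\X$. I would keep the exposition brief, since no new technical machinery is required beyond what has already been developed in Sections~\ref{S:LRSC}--\ref{S:RPC}; the corollary is essentially a bookkeeping statement that packages those results into the $H/K$-style classification.
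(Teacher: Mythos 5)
Your proposal is correct and takes the same route as the paper: the corollary is a packaging statement, following because strong hyperbolicity yields the Local Rigid Phase Property (Theorem~\ref{T:LRPP}), which the paragraph preceding Theorem~\ref{T:spatiotemporal} explicitly remarks is sufficient to run the argument of~\cite{SP08}. The one point to flag is that your ``key technical point'' about quasi-hyperbolicity of $\X_\theta$ on $2\GG$ is not actually work that the corollary needs to do---it is already fully absorbed into the proof of Theorem~\ref{T:LRPP} in Section~\ref{S:RPC}---so the corollary really is, as you say at the end, pure bookkeeping.
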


Transitivity of $\GG$ is required in Theorem~\ref{T:spatiotemporal}
because nodes can be removed from
a feedforward network without affecting the phase pattern or its rigidity,
but destroying the cyclic symmetry. This issue is raised in 
Stewart and Parker~\cite[Section 3.1]{SP08}.
Golubitsky \etal~\cite[Theorem 1.4]{GRW12} prove a similar result. 
Finally, Golubitsky \etal~\cite[Section 7]{GRW12} provide a detailed
and thorough discussion of rigid phase patterns in non-transitive
networks. Here it may be necessary to {\em complete} the network $\GG$ by adding
further downstream nodes and arrows to obtain a larger network
$\GG^*$. This extension does not affect the dynamics
on $\GG$ because nodes in $\GG$ force those in $\GG^*\setminus\GG$,
but it restores cyclic group symmetry.

\section{Proof of Local Rigidity Properties for all\\
 1- and 2-Colourings}
 \label{S:12C}

We now remove the hypothesis of strong hyperbolicity
in some special cases. These results are new and add evidence in support
of the Rigidity Conjectures.

A colouring with $k$ colours is called a {\em $k$-colouring}.
We prove the first three Rigidity Conjectures (RIC, RSC, RPC) 
for 1- and 2-colourings by proving that strong hyperbolicity is generic. 
For the Rigid Synchrony
Conjecture, the number of colours refers to the number of synchrony
classes. For the Rigid Phase
Conjecture, the number of colours refers to the number of
node waveforms that are the same up to a phase shift. That is,
$c \bowtie d$ whenever $x_c(t) \equiv x_d(t+\theta T)$ in the above
notation. If the Rigid Phase Property holds, this colouring is balanced.
We exclude the Full Oscillation Conjecture (FOC) because this is not 
associated with a specific number of colours.

We will prove, for any network $\GG$:

\begin{theorem}
\label{T:2colRC}

{\rm (a)} The Rigid Input, Synchrony, and Phase  Properties 
hold for any $1$-colouring of any network.

{\rm (b)} The Rigid Input, Synchrony, and Phase  Properties 
hold for any $2$-colouring of any network.
\end{theorem}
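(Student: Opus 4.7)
The plan is to deduce Theorem~\ref{T:2colRC} from Theorem~\ref{T:localRSC}, Corollary~\ref{C:LRIP}, and Theorem~\ref{T:LRPP} by showing that in the $1$- and $2$-coloured cases, strong hyperbolicity of $\X$ can be achieved by an arbitrarily small admissible perturbation. The key leverage is Theorem~\ref{T:QQadmiss}: every admissible vector field on a quasi-quotient $\GG^\RR$ lifts to an admissible vector field on $\GG$ with control of the $C^1$ norm. Consequently, strong hyperbolicity of $\X$ for a colouring $\bowtie$ reduces to a Kupka-Smale type genericity statement on $\GG^\RR$ alone, where the number of nodes is $|\RR|$, which here is one or two.

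For part (a), the quasi-quotient for a $1$-colouring has a single representative $r$, with its input arrows forming self-loops. By Proposition~\ref{P:char_admiss}, admissible maps on $\GG^\RR$ are the $B(r,r)$-invariant smooth maps $P_r\times P_{T(r)} \to P_r$, where the vertex group acts on the self-loops by permutation. The induced ODE on $P_r$ is therefore either a general dynamical system (when $B(r,r)$ is trivial) or a $\Gamma$-equivariant one for the finite group $\Gamma = B(r,r)$. The Kupka-Smale Theorem of Peixoto~\cite{P66} in the first case, or Field's equivariant version~\cite{F80} in the second, supplies an arbitrarily small admissible perturbation on $\GG^\RR$ making $\X^\RR$ hyperbolic; lifting by Theorem~\ref{T:QQadmiss}(b,c) yields strong hyperbolicity of $\X$ for the $1$-colouring. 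Theorem~\ref{T:localRSC} then gives RSP, Corollary~\ref{C:LRIP} gives RIP, and the doubling argument of Section~\ref{S:RPC}, combined with Lemma~\ref{L:G->2G} to pass to quasi-hyperbolicity on $2\GG$, gives RPP.

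For part (b), we classify all $2$-node networks that can arise as quasi-quotients $\GG^\RR$, up to ODE-equivalence as outlined in the introduction. In each class the space of admissible vector fields on $\GG^\RR$ is determined by the vertex groups and by whether the two representative nodes are input equivalent; it always coincides with the fixed-point subspace of the smooth vector fields on $P_{r_1}\times P_{r_2}$ under a finite symmetry group $\Gamma$ (the product of vertex groups, possibly extended by $\Z_2$ corresponding to the node-swap when the two representatives are input equivalent). In every case the relevant Kupka-Smale result --- Peixoto~\cite{P66} if $\Gamma$ is trivial, Field~\cite{F80} otherwise --- produces a $C^1$-small admissible perturbation on $\GG^\RR$ for which $\X^\RR$ is hyperbolic. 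Lifting via Theorem~\ref{T:QQadmiss} establishes strong hyperbolicity of $\X$ for the $2$-colouring, and Theorem~\ref{T:localRSC} together with Corollary~\ref{C:LRIP} conclude RSP and RIP. For RPP, the doubling trick converts a $2$-phase pattern on $\GG$ into a $2$-synchrony pattern on $2\GG$ whose quasi-quotient is again a $2$-node network; the same Kupka-Smale argument yields generic quasi-hyperbolicity of the sheared orbit $\X_\theta$ on $(2\GG)^\RR$, and Theorem~\ref{T:LRPP} finishes the proof.

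The main obstacle is the classification in (b): one must enumerate the $2$-node networks up to ODE-equivalence and verify that in each resulting case the admissible vector fields form exactly the equivariant vector fields for a finite group action on a finite-dimensional vector space, so that Field's theorem applies cleanly. A secondary subtlety is that the doubled quasi-quotient $(2\GG)^\RR$ carries sheared periodic orbits foliating a $2$-torus rather than an isolated periodic orbit, so the argument must use \emph{quasi}-hyperbolicity via Definition~\ref{D:theta_hyp} and Lemma~\ref{L:G->2G} throughout; this requires checking that the Peixoto-Field perturbation argument on $(2\GG)^\RR$ can be carried out while respecting the $\theta$-shear structure, rather than destroying it.
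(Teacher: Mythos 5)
Your overall strategy --- reduce strong hyperbolicity to a Kupka--Smale statement on a $1$- or $2$-node quasi-quotient, lift perturbations back to $\GG$ via Theorem~\ref{T:QQadmiss}, then invoke Theorem~\ref{T:localRSC}, Corollary~\ref{C:LRIP}, and Theorem~\ref{T:LRPP} --- is exactly the paper's strategy, and your handling of the doubling/quasi-hyperbolicity subtlety for RPP is also correct at the level of the argument. However, there is a genuine gap in the central claim you rely on for part (b), namely that the admissible vector fields on every $2$-node quasi-quotient ``always coincide with the fixed-point subspace of the smooth vector fields on $P_{r_1}\times P_{r_2}$ under a finite symmetry group.'' This is false, and the paper's classification (Theorem~\ref{T:2nodeODE}) makes the failure explicit.

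Consider the feedforward quasi-quotients (cases (5) and (6) of Figure~\ref{F:2nodeODElist}). Admissible vector fields take the skew-product form $(\,f(x_1),\ g(x_2,x_1)\,)$: the constraint that the first component does not depend on $x_2$ is \emph{not} an equivariance condition for any linear action of a finite group on $P_1\times P_2$, so neither Peixoto nor Field applies to this class. The paper must prove a weaker ``locally Kupka--Smale'' property directly, via the Poincar\'e map of the skew product, and this requires both the continuous and the discrete Kupka--Smale theorems. Likewise, the transitive asymmetric cases (7) and (8) give admissible maps such as $(\,f(x_1,x_2,x_2),\ f(x_2,x_1,x_2)\,)$, which are \emph{not} $\Z_2$-equivariant; the paper handles these by showing that when $\X$ is off the diagonal, tubular neighbourhoods of the two images $\X^1$, $\X^2$ are disjoint, so the admissibility constraint evaporates locally and the problem reduces to an unconstrained one. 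None of this is covered by your Peixoto--Field dichotomy. Finally, the disconnected quasi-quotients (cases (1), (2)) cannot be made hyperbolic at all by any perturbation --- as shown in Section~\ref{S:FH}, a $2$-node disconnected network with an oscillating periodic orbit always has two Floquet multipliers equal to $1$. Your approach would fail here; the paper sidesteps this by observing that a disconnected quasi-quotient arises only when no arrow of $\GG$ crosses the two colour classes, so the colouring is automatically balanced and there is nothing to prove.

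A smaller point on part (a): for the $1$-node quasi-quotient the vertex group $B(r,r)$ acts by permuting the input slots of $P_{T(r)}$, but on the diagonal $u=u_1=\cdots=u_k$ this action is trivial. The induced vector field $\dot u = f_r(u,\ldots,u)$ is therefore an arbitrary smooth vector field on $P_r$ (the paper records this as ODE-equivalence to the arrow-free $1$-node network), so there is no genuinely equivariant case; the standard Kupka--Smale theorem always suffices. This error is harmless, but it is a symptom of the same misconception that sinks part (b): vertex-group invariance of $f_c$ on the input arguments is not the same as equivariance of the induced vector field on the state space.
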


The proof is deferred to Sections~\ref{S:RC1C} and \ref{S:RC2C}.

\begin{corollary}
\label{C:upto3}
The Rigid Input, Synchrony, and Phase  Properties and the
Full Oscillation Property hold for any network with $1$, $2$, or $3$ nodes.
\qed\end{corollary}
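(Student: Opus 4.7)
The plan is to reduce the corollary to Theorem~\ref{T:2colRC} by first verifying that every hyperbolic periodic orbit $\X$ of an admissible ODE for a network $\GG$ with at most three nodes is in fact strongly hyperbolic; once this is established, the four rigidity properties follow at once from Theorems~\ref{T:localRSC}, \ref{T:globalRSC}, \ref{T:LRPP} and \ref{T:FOC}, together with Corollary~\ref{C:LRIP}.

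To establish strong hyperbolicity I would argue by cases on the colouring $\bowtie$. Since $\GG$ has at most three nodes, $\bowtie$ uses at most three colours. For $1$- and $2$-colourings, the proof of Theorem~\ref{T:2colRC} (carried out in Sections~\ref{S:RC1C} and \ref{S:RC2C}) establishes strong hyperbolicity of $\X$ for $\bowtie$. The only case left is the discrete $3$-colouring $\{\{1\},\{2\},\{3\}\}$ on a $3$-node network. Here I would choose $\RR=\CC$, so that the quasi-quotient $\GG^\RR$ is canonically identified with $\GG$ itself and the induced orbit $\X^\RR$ coincides with $\X$; strong hyperbolicity for this colouring is then immediate from hyperbolicity of $\X$, requiring no perturbation whatsoever. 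As a complementary remark, the discrete colouring is also vacuously balanced --- Definition~\ref{D:balance} has empty content when no two distinct nodes share a colour --- so RSP and RIP for this colouring hold trivially even without invoking Theorem~\ref{T:localRSC}, while the only non-trivial content of RPP in this case consists of single-node multirhythms, which are precisely the situation handled by the doubling argument in the proof of Theorem~\ref{T:LRPP} via Lemma~\ref{L:G->2G}.

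With strong hyperbolicity of $\X$ now in hand for every colouring of $\GG$, I would apply Theorems~\ref{T:localRSC} and \ref{T:globalRSC} for the Rigid Synchrony Property, Corollary~\ref{C:LRIP} for the Rigid Input Property, Theorem~\ref{T:LRPP} for the Rigid Phase Property, and Theorem~\ref{T:FOC} for the Full Oscillation Property (the last also following from the implication $\mathrm{RPP}\implies\mathrm{FOP}$ noted in Section~\ref{S:IBC}).

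There is no serious obstacle here: the entire argument is essentially bookkeeping, the only real content being the triviality of the strong-hyperbolicity check for the discrete $3$-colouring case that lies just outside the reach of Theorem~\ref{T:2colRC}. Everything else is already packaged in the general machinery proved earlier in the paper.
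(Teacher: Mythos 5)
Your proof is correct and takes essentially the same route as the paper, which treats the corollary as an immediate consequence of Theorem~\ref{T:2colRC}: the only colourings of a $\le 3$-node network not covered by that theorem are discrete, and --- as you correctly observe by taking $\RR = \CC$ --- strong hyperbolicity for a discrete colouring is trivially inherited from hyperbolicity of $\X$ (and the colouring is in any case vacuously balanced). Your explicit reduction to strong hyperbolicity for every colouring, followed by an appeal to Theorems~\ref{T:localRSC}, \ref{T:LRPP} and \ref{T:FOC}, is exactly the reasoning the paper leaves implicit behind its ``\(\square\)''.
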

Here we include the Full Oscillation Property because 
this can be defined for a specific number of nodes.

These results are new. They add evidence supporting the Rigidity Conjectures.
In particular, if a counterexample exists, it must have at least 4 nodes and
the synchrony pattern must involve at least $3$ colours. Moreover,
the periodic orbit $\X$ must fail to be strongly hyperbolic.
Indeed, for any unbalanced synchrony pattern $\bowtie^0$,
after any small perturbation, each $\X^\RR$ is the limit of a continuum
of periodic orbits of the induced ODE for $\RR$, containing periodic
orbits distinct from $\X^\RR$ that meet any neighbourhood of the point
$x^0(t_0)$.

\subsection{ODE-Equivalence}

The proof of Theorem~\ref{T:2colRC} depends on the concept of ODE-equivalence
\cite{DS05, LG06}. We summarise the definition and basic properties.

A fundamental feature of network dynamics is that for a given choice
of node spaces $P_c$ and total state space $P = \prod P_c$, each network $\GG$
determines a class of differential equations on $P$ defined by 
the admissible maps. There is a bijection between
network diagrams and these `admissible 
classes'. Despite this, it was pointed out in~\cite{LG06} that
networks with different diagrams can define the same {\em space} of admissible maps. 
Such networks are said to be ODE-equivalent,
because any admissible ODE for one of them can be interpreted as an
admissible ODE for the other, and the same goes for the solutions of the ODEs.

Nontrivial ODE-equivalence can occur because of the
technical but vital distinction between a component $f_c:P \to P_c$ 
and the associated $\hat f_c : P_c\times P_{T(c)} \to P_c$;
see Definition~\ref{D:admiss} (b). Although
$\hat f_c$ determines $f_c$ uniquely, different choices of
$\hat f_c$ can determine the same $f_c$. Admissible classes
use specific {\em presentations} $\hat f_c$ of the components of the map,
whose domain lists the tail nodes of arrows. The space of admissible maps
$P \to P$ is determined by the $f_c$, and this difference 
creates the ambiguity. 

\begin{remark}\em
\label{R:ODEpres}
Properties of specific orbits (such as periodicity,
hyperbolicity, and synchrony and phase patterns) are preserved when
passing to an ODE-equivalent network.
\qed\end{remark}

\begin{figure}[htb]
\centerline{
\includegraphics[width=.45\textwidth]{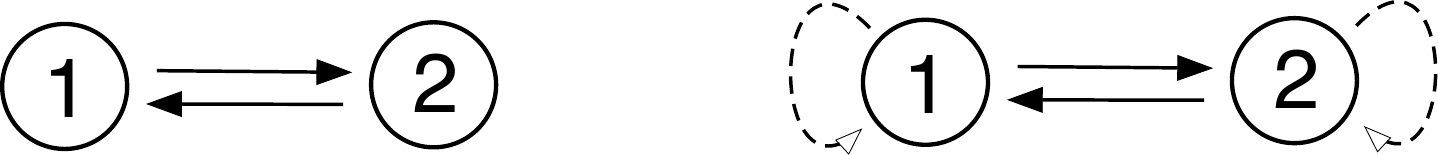}}
\caption{Two $2$-node networks with different network topologies
that define the same space of admissible maps. {\em Left}: $\GG_1$. {\em Right}: $\GG_2$.}
\label{F:2nodeODE}
\end{figure}

\begin{example}\em
\label{ex:2nodeODEequiv}

Figure~\ref{F:2nodeODE}
shows a simple example of  ODE-equivalence, discussed 
briefly in~\cite{GST05}. In $\GG_1$ both nodes have the same node-type, 
 and similarly for $\GG_2$.
Suppose that the state space for all four nodes is $\R^k$.
The admissible maps for $\GG_1$ have the form
\[
F(x_1, x_2) = (f(x_1, x_2), f(x_2, x_1))
\]
where $f : \R^k\times\R^k \to \R^k$ is any smooth map. 
The admissible maps for $\GG_2$  have the form
\[
G(x_1, x_2) = (g(x_1, x_1, x_2), g(x_2, x_2, x_1))
\]
where $g : \R^k \times \R^k\times\R^k \to \R^k$ is any smooth map. 

It is now easy to 
see that the set $\{G\}$ of all $G$ is the same as the set $\{F\}$ of all $F$. 
Namely, given $f$ we can define $g(u, v, w) = f(u, w)$
so that $\{G\} \subseteq \{F\}$. Conversely, given $g$ we can define $f(u, v) = g(u, v, v)$,
so that $\{F\} \subseteq \{G\}$. Therefore $\GG_1$ and $\GG_2$ are ODE-equivalent. 
\qed\end{example}
 
 \begin{definition}\em
\label{D:ODEequiv}
Networks $\GG_1$  and $\GG_2$ with the same sets of nodes
(up to the numbering of the nodes) are {\em ODE-equivalent}, written
$\GG_1 \simODE \GG_2$, if, for the same choices of node spaces,
$\GG_1$  and $\GG_2$ have the same spaces of admissible maps.

They are {\em linearly equivalent} if they have the same spaces of linear admissible maps.
\qed\end{definition}

Recall that the adjacency matrix $A$ for a given arrow-type is the matrix
$(a_{ij})$ for which $a_{ij}$ is the number of arrows $e$ of that type
such that $\hd(e) = i$ and $\tl(e) = j$. 
When arrow-types are irredundant (see Section~\ref{S:redundancy})
and node spaces are 1-dimensional, it is easy to see that
the space of linear admissible maps is spanned by
the adjacency matrices for the separate arrow types, including internal node `arrows'
distinguished by node type.
Linear equivalence then becomes `the adjacency matrices span the same space'.

The key result on ODE-equivalence is~\cite[Theorem 7.1]{DS05}:

\begin{theorem}
\label{T:DS7.1}
Two networks are ODE-equivalent if and only if they are linearly equivalent.
Moreover, when verifying linear equivalence we may assume that all node
spaces are $1$-dimensional.
\qed\end{theorem}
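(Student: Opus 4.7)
The plan is to prove the forward direction trivially and establish the converse by a node-by-node matching argument, with a tensor-product reduction handling the node-space dimension.

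The forward implication is immediate: if $\GG_1 \simODE \GG_2$, then the space of \emph{all} admissible maps agrees for the two networks, so the subspaces of linear admissible maps certainly agree, giving linear equivalence.

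For the converse, I would appeal to Proposition~\ref{P:char_admiss}: an admissible map is determined by choosing, for each representative $c$ of an input-equivalence class, a $B(c,c)$-invariant $C^\infty$ component $\hat f_c(x_c,x_{T(c)})$. The linear admissible components at $c$ are spanned by the node-type contribution $x_c$ together with, for each arrow type $\alpha$, the row sum $\sum_{e \in I(c),\,[e]=\alpha} x_{\tl(e)}$; these are exactly the rows of the adjacency matrices indexed at $c$. The key claim I would establish is that the linear span of these rows, together with the vertex-group action $B(c,c)$, determines the full $B(c,c)$-invariant smooth component space at $c$. If $\GG_1$ and $\GG_2$ have the same linear admissible span at every node, then the effective input pattern — the multiset of distinct tail variables at $c$ grouped by vertex-group orbit — must coincide (possibly after collapsing a self-loop or other redundancy whose contribution has already been absorbed into the node-type term or another arrow-type sum, as in Example~\ref{ex:2nodeODEequiv}). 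Any smooth function of these distinct input variables is admissible for both networks, and conversely, hence the admissible-map spaces coincide and $\GG_1 \simODE \GG_2$.

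For the reduction to $1$-dimensional node spaces I would decompose a linear admissible map $L : P \to P$ with $P_c = \R^{k_c}$ block-wise: the $(c,d)$-block is of the form $\sum_\alpha a^{(\alpha)}_{cd} M_\alpha$, where $a^{(\alpha)}_{cd}$ is the $(c,d)$-entry of the adjacency matrix of arrow type $\alpha$ and the $M_\alpha$ are arbitrary linear maps between the relevant node spaces, subject only to the input/node type structure. Thus the space of linear admissible maps on $P$ is essentially the tensor product of the $1$-dimensional linear admissible space (spanned by the adjacency matrices) with free block parameters $M_\alpha$. Two networks therefore have equal linear admissible spaces on general $P$ if and only if the spans of their adjacency matrices coincide in the $1$-dimensional setting.

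The main obstacle, as I see it, is the middle claim: showing that the linear span of adjacency rows at each node $c$ reconstructs the $B(c,c)$-invariant smooth component space. The subtlety is that two networks may have genuinely different multiplicities of arrows at $c$ yet yield the same linear span, because self-loops, or arrows whose tails duplicate the head variable, contribute redundant inputs that collapse under smooth composition. Handling this requires careful use of irredundancy (Section~\ref{S:redundancy}) to pin down what the adjacency span actually encodes, plus a matching argument on multisets modulo vertex-group action. Once this per-node reconstruction is in place, assembling the matchings over all input-equivalence classes gives the identification of the full admissible-map spaces, completing the proof.
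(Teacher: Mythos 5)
The paper offers no proof of this theorem: it is quoted verbatim from Dias and Stewart~\cite[Theorem~7.1]{DS05}, with the concluding \texttt{\textbackslash qed} serving only to close the statement. So there is no internal argument to compare against, and what you have written is a reconstruction attempt.

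Your forward direction is correct and complete: equal spaces of admissible maps implies equal subspaces of linear admissible maps. The $1$-dimensional reduction is also on the right track, but it is sketchier than it looks. A linear admissible map on $P$ is not literally a tensor of the $1$-dimensional span with free block parameters, because the block $M_\alpha$ that replaces the scalar coefficient of an adjacency matrix is allowed to vary over input-equivalence classes: the $(c,d)$-block is $\sum_\alpha a^{(\alpha)}_{cd}\,M_\alpha^{[c]}$ where $M_\alpha^{[c]}$ depends on the input class $[c]$ of the head node, with equalities across an input class forced by the pullback condition but with no constraint linking different input classes. Showing that equality of the high-dimensional linear admissible spaces forces, and is forced by, equality of the $1$-dimensional adjacency spans therefore needs a careful accounting of this block structure, not just ``tensor with free parameters.''

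The genuine gap is your middle claim, which you correctly flag as the obstacle and then do not prove. Equality of the adjacency-matrix spans is a real-linear-algebra condition on integer matrices; it does \emph{not} directly yield a combinatorial matching of arrow multisets (modulo vertex-group orbits), because the two families of adjacency matrices need not be expressible in terms of one another by combinations that respect nonnegative-integer structure, and one network's input configuration at a node can be strictly richer than the other's while still giving the same linear span at that node (your own Example~\ref{ex:2nodeODEequiv} and its generalisations with added redundant arrows illustrate this). Turning ``same linear span at $c$'' into ``same $B(c,c)$-invariant smooth component space at $c$'' is exactly what requires proof, and the Dias--Stewart argument does not proceed by a multiset matching: it works through the structure of admissible polynomial maps of each degree and shows these are controlled by the linear piece, which is a substantially different and more algebraic route. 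Your sketch is consistent with the theorem being true, but as it stands it defers the hard direction and replaces it with a heuristic that would not survive scrutiny without substantial additional work.
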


This theorem reduces ODE-equivalence to routine linear algebra, applied to
the space spanned by the adjacency matrices.

\begin{example}\em
\label{ex:lin->ODE}
Consider the networks $\GG_1$  and $\GG_2$ in Figure~\ref{F:2nodeODE}.
The linear admissible maps for $\GG_1$ are spanned by
\[
\Matrix{1 & 0 \\ 0 & 1} \qquad \Matrix{0 & 1 \\ 1 & 0}  
\]
(adjacency matrices for internal node arrows, solid arrows, respectively).
Those for $\GG_2$ are spanned by
\[
\Matrix{1 & 0 \\ 0 & 1} \qquad \Matrix{0 & 1 \\ 1 & 0} \qquad \Matrix{1 & 0 \\ 0 & 1} 
\]
(internal node arrows, solid arrows, dashed arrows, respectively).
Clearly these spaces are the same, so $\GG_1 \simODE \GG_2$.
\qed\end{example}

\subsection{1-Colour Synchrony}
\label{S:RC1C}

The proof of the first three Rigidity Conjectures for 1-colourings 
(that is, when the orbit is fully synchronous for the RSC and RIC,
and has a single waveform up to phase for the RPC)
is now straightforward:
\vspace{.1in}

\noindent 
{\bf Proof of Theorem~\ref{T:2colRC} (a)}

The result is true for equilibria by~\cite{A10,GST05,S20}, 
so we may assume $\X$ is not an equilibrium.
Assume for a contradiction that 
$\GG$ is not homogeneous. 
The periodic orbit $\X$ is fully synchronous; that is, $x_c(t) \equiv x_d(t)$
for all nodes $c,d$, so the colouring $\bowtie\ = \{1,2, \ldots, n\}$.
Let $\RR = \{1\}$.

The quasi-quotient $\GG^\RR$ is a 1-node network, on which
the induced ODE is $\dot u = f_1(u, \ldots,u)$. We claim that 
$\GG^\RR$ is ODE-equivalent to a 1-node network with no arrows
(other than the internal node `arrow'). This follows from
Theorem~\ref{T:DS7.1}, because all
adjacency matrices are integer multiple of the identity.
Admissible maps are therefore arbitrary smooth functions of $u$,
so the standard Kupka-Smale Theorem implies that $\GG^\RR$
is a Kupka-Smale network, which implies the Rigidity Properties.
\qed

The proof of the Rigidity Conjectures for 2-colourings 
requires further preparation, done in the next subsection.
We complete the proof in Section~\ref{S:RC2C}.

\subsection{Classification of $2$-Node Networks up to ODE Equivalence}

To deal with 2-colourings we first
classify all possible 2-node networks up to ODE-equivalence.
This is straightforward, but seems not to be in the literature, so we
give details. 

\begin{figure}[htb]
\centerline{
\includegraphics[width=.12\textwidth]{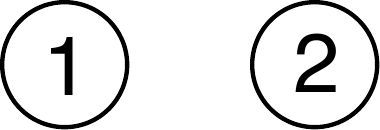} \qquad\qquad
\includegraphics[width=.12\textwidth]{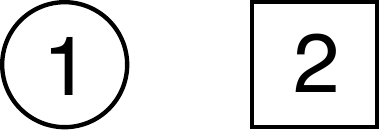} \qquad\qquad
\includegraphics[width=.12\textwidth]{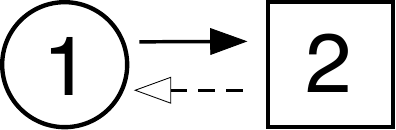} \qquad\qquad
\includegraphics[width=.12\textwidth]{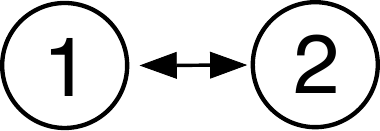}}
\centerline{
\quad\ \ (1)  \qquad \quad \quad  \qquad(2)  \ \ \qquad \qquad \qquad \qquad 
(3) \qquad \qquad \qquad \quad  (4)
}
\vspace{.2in}
\centerline{
\includegraphics[width=.12\textwidth]{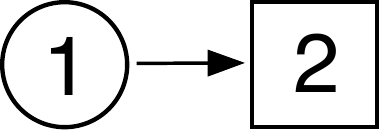} \qquad\qquad
\includegraphics[width=.17\textwidth]{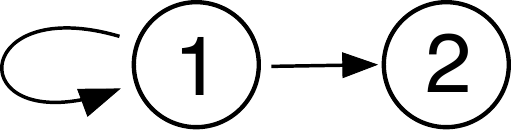} \qquad\qquad
\includegraphics[width=.17\textwidth]{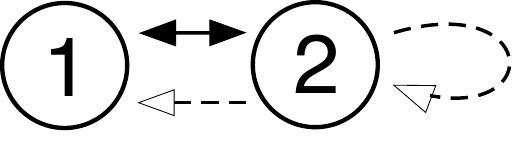} \qquad\qquad
\includegraphics[width=.17\textwidth]{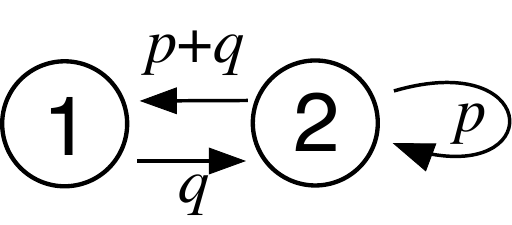}
}
\centerline{
(5) \qquad\qquad \qquad \qquad \quad (6)  \qquad \qquad \qquad \qquad 
(7) \qquad \qquad \qquad \qquad  (8)
}
\caption{Classification of 2-node networks up to ODE-equivalence. In case
(8) we require $p, q >0$ and $\gcd(p,q)=1$.}
\label{F:2nodeODElist}
\end{figure}

\begin{theorem}
\label{T:2nodeODE}
Every $2$-node network is ODE-equivalent to precisely one of the networks
illustrated in Figure~{\rm \ref{F:2nodeODElist}}.
\end{theorem}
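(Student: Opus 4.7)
By Theorem~\ref{T:DS7.1}, ODE-equivalence of $2$-node networks coincides with linear equivalence, and linear equivalence may be tested with $1$-dimensional node spaces. So I would reduce the classification to enumerating the subspaces $V \subseteq \mathrm{Mat}_2(\R)$ that arise as the span of the adjacency matrices (including the internal node-type matrices) of some $2$-node network, modulo the $S_2$-action of relabelling the two nodes; for each equivalence class I will then write down a minimal network realising $V$.

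The enumeration splits first on whether the two nodes share a node-type. In the homogeneous case $I \in V$; in the heterogeneous case $D_1,D_2\in V$, where $D_i$ is the rank-one diagonal projector onto the $i$-th coordinate. Any further arrow-type contributes a non-negative integer matrix to a generating set of $V$, and conversely any rational subspace containing the prescribed diagonal part admits a spanning set of non-negative integer matrices (obtained by adding diagonal corrections and clearing denominators). So the question becomes the purely linear-algebraic one of listing such subspaces, up to $S_2$. I would then run through $\dim V \in \{1,2,3,4\}$ in the homogeneous case and $\dim V \in \{2,3,4\}$ in the heterogeneous case. In most strata $V$ is rigidly determined --- for example $\dim V = 1$ forces $V=\langle I\rangle$ and no arrows, giving network~(1); $\dim V=4$ forces $V=\mathrm{Mat}_2(\R)$ and gives a fully-coupled representative --- and a finite but routine case analysis identifies the minimal realising network with one of the diagrams~(1)--(7).

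The main obstacle is the one continuous family, case~(8). In dimensions where $V$ properly contains the diagonal part but is not all of $\mathrm{Mat}_2(\R)$, the quotient $V/\langle\mathrm{diag}\rangle$ is a line or plane inside the off-diagonal part, and distinct lines give distinct $V$. In the heterogeneous dimension-$3$ setting, any off-diagonal generator $M$ is determined up to scale by its two non-zero entries, so the projective ratio $[b:c]$ is an invariant; requiring $M$ to be a non-negative integer matrix with coprime entries pins down a canonical representative $(p,q)$ with $p,q>0$ and $\gcd(p,q)=1$, and this is realised by a single arrow type carrying $p$ arrows one way and $q$ arrows the other. Distinct coprime pairs give distinct $V$, so this really is a continuous family of inequivalent ODE classes. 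The delicate bookkeeping will be to verify that the degenerate sub-cases $bc=0$ (one-way arrows) fall into a different dimension stratum and land in one of networks~(1)--(7) rather than~(8), and that the analogous homogeneous strata collapse under the node-swap action to finitely many additional classes, so that the final list is exactly Figure~\ref{F:2nodeODElist} with no redundancies.
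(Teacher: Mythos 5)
Your reduction via Theorem~\ref{T:DS7.1} to enumerating the spans $V$ of the adjacency matrices, modulo the node swap, is exactly the paper's strategy (the paper organises the same case analysis by connectivity --- disconnected / feedforward / transitive --- and then homogeneity, rather than by homogeneity and $\dim V$). But two linked steps would fail as written. The first is your realizability criterion. An adjacency matrix of an arrow type is not an arbitrary non-negative integer matrix: if the two nodes are input equivalent (your homogeneous case), every arrow type contributes a matrix with \emph{equal row sums}, and if they are not, then by irredundancy arrows with heads at the two different nodes cannot share a type, so every non-internal adjacency matrix is supported in a single row. ``Adding diagonal corrections and clearing denominators'' does not produce such matrices from an arbitrary rational subspace containing the diagonal part: for instance $\bigl\langle I, E_{21}\bigr\rangle$, where $E_{21}$ has a single nonzero entry in position $(2,1)$, contains no non-scalar matrix with equal row sums and is not the span of any homogeneous $2$-node network, yet your criterion admits it. Without these structural constraints the enumeration both admits spurious classes and misassigns genuine ones.

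The second, consequent, failure is the location of the continuous family (8). In the heterogeneous $\dim V=3$ stratum there is no off-diagonal generator with two nonzero entries --- each arrow type occupies one row --- so that stratum is rigid and yields only the feedforward network (5). Your proposed realization, a single arrow type carrying $p$ arrows one way and $q$ the other between nodes of different types, is not a valid irredundant network; once the two directions receive distinct types the span becomes all of $\mathrm{Mat}_2(\R)$, i.e.\ class (3). The family (8) in fact lives in the homogeneous \emph{transitive} stratum with $\dim V=2$: the equal-row-sum constraint forces the off-diagonal generator (after subtracting a multiple of $I$ and scaling) into the form $\left(\begin{smallmatrix} 0 & p+q \\ q & p\end{smallmatrix}\right)$, realized by $p+q$ arrows into node~1 and $q$ arrows plus $p$ self-loops into node~2, all of one type. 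The invariant is then the ratio of the two off-diagonal entries, normalised to coprime $p,q>0$, with the degenerate cases $p=0$ and $q=0$ collapsing to networks (4) and (6); this is the bookkeeping your proposal needs but does not supply.
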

\begin{proof}

By Theorem~\ref{T:DS7.1} it is enough to
classify $2$-node networks up to linear equivalence, assuming all node spaces
are $1$-dimensional. The classification therefore reduces to considering the
vector spaces spanned by the adjacency matrices for each arrow-type (assuming irredundancy).
There are three main cases:

{\rm (a)} The network $\GG$ is disconnected.

{\rm (b)} The network $\GG$ is feedforward; that is, its is connected and
has more than one transitive component.

{\rm (c)} The network $\GG$  is transitive.
For a 2-node network, transitivity is equivalent to being all-to-all connected.
Throughout, let $I$ be the $2 \times 2$ identity matrix.

\noindent
{\em Case} (a):
This is trivial. Either $\GG$ is homogeneous, or not. If it is homogeneous
then all adjacency matrices are multiples of 
$I$, so $\GG$ is ODE-equivalent to network (1) in the figure. If not, 
all adjacency matrices are positive integer multiples of either of the following two:
\[
\Matrix{1 & 0 \\ 0 & 0} \qquad \Matrix{0 & 0 \\ 0 & 1}
\]
Now $\GG$ is ODE-equivalent to network (2) in the figure.

\noindent
{\em Case} (b):
Clearly any network ODE-equivalent to a feedforward
network is also feedforward (all adjacency matrices have a common
block-triangular structure).
Renumbering if necessary we can assume that that node 1 is the upstream node.
Either $\GG$ is homogeneous, or not. If it is homogeneous then
the internal arrow-type has adjacency matrix a multiple of $I$.
Every other arrow-type also has adjacency matrix a multiple of $I$,
or it has adjacency matrix of the form
\[
\Matrix{a+b & 0 \\ a & b}
\]
with $a \geq 1$,
by homogeneity. By connectedness, at least one such arrow-type has
$a > 0$. Subtracting $bI$ we obtain
\[
a \Matrix{1 & 0 \\ 1 & 0}
\]
So the span of the adjacency matrices is that of
\[
\Matrix{1 & 0 \\ 0 & 1} \qquad \Matrix{1 & 0 \\ 1 & 0}
\]
however many arrow-types there may be. 
(Different types may have different entries $a,b$ but the
same statement holds.) Therefore $\GG$ is ODE-equivalent 
to network (6) in the figure.

If $\GG$ is not homogeneous then the internal arrow-types have adjacency matrices
\[
\Matrix{1 & 0 \\ 0 & 0} \qquad \Matrix{0 & 0 \\ 0 & 1}
\]
Any other arrow-type either has adjacency matrix a multiple of one of these,
or its adjacency matrix has the form
\[
\Matrix{a & 0 \\ c & d}
\]
with at least one arrow-type for which $c > 0$.
We can subtract the diagonal terms without changing
the span of the adjacency matrices, and then divide by $c$.
Therefore the span of the adjacency matrices is that of
\[
\Matrix{1 & 0 \\ 0 & 0} \qquad \Matrix{0 & 0 \\ 0 & 1} \qquad \Matrix{1 & 0 \\ 0 & 0}
\]
and $\GG$ is ODE-equivalent 
to network (5) in the figure.

\noindent
{\em Case} (c):
Either $\GG$ is homogeneous or not. If it is homogeneous then
the internal arrow-type has adjacency matrix $I$. Any other arrow-type
is either a self-loop whose adjacency matrix is a multiple of $I$, 
or its adjacency matrix has the form
\[
\Matrix{a & b \\ c & d}
\]
where $a+b = c+d$. Swapping nodes if necessary
we may assume that $a \leq d$. Subtracting $aI$, the above matrix becomes
\[
\Matrix{0 & b \\
c & b-c}
\]
and by scaling, we may assume that $\gcd(b,c) = 1$.

The largest possible span is the set of all matrices 
\[
\Matrix{a & b \\ c & d}
\]
where $a+b = c+d$, which has dimension 3. Therefore there are two
possibilities: the span is 3-dimensional or it is 2-dimensional.
If it is 3-dimensional it is spanned by
\[
\Matrix{1 & 0 \\ 0 & 1} \qquad \Matrix{0 & 1 \\ 1 & 0} \qquad \Matrix{0 & 1 \\ 0 & 1}
\]
which is case (7) in the figure.
If it is 2-dimensional, then every adjacency matrix
is either a multiple of the identity or a multiple of some
\[
\Matrix{0 & b \\
c & b-c}
\]
with $\gcd(b,c) = 1$. Let $b=p+q, c=q$, so $b-c = p$. 

If $p=0$ we can scale $q$ to equal $1$, 
giving case (4) in the figure. If $q=0$ we can scale $p$ to $1$,
giving case (6) in the figure (again).
Otherwise $p, q >0$ and $\gcd(p,q)=1$.
Two such matrices are linearly dependent modulo the identity
if and only if they are the same. So now we get a 2-parameter family
of networks, with parameters $p, q$ such that $\gcd(p,q) = 1$,
which is case (8) in the figure.

Finally, suppose $\GG$ is not homogeneous. The nodes are not
input equivalent, so by irredundancy they have different types.
There must be at least one arrow in each direction,
and these have different arrow-types. 
The node-types give adjacency matrices
\[
\Matrix{1 & 0 \\ 0 & 0} \quad \Matrix{0 & 0 \\ 0 & 1}
\]
Any self-loops give adjacency matrices that are multiples of these,
and can be deleted.
Arrows from node 1 to node 2 and the reverse give adjacency matrices of the form
\[
\Matrix{0 & 0 \\ a & 0} \quad \Matrix{0 & b \\ 0 & 0}
\]
respectively, where $a,b \neq 0$ for at least one arrow-type each way.
Thus the adjacency matrices span the 4-dimensional space of all $2\times 2$ matrices.
The simplest such network is number (3), and all others are ODE-equivalent to it
since their adjacency matrices span the same 4-dimensional space.
\end{proof}

\subsection{Rigidity Conjectures for 2-Colourings}
\label{S:RC2C}

Some (non-standard) terminology is useful:

\begin{definition}\em
\label{D:LKS}
A periodic orbit $\Y$ of a dynamical system $\dot x = f(x)$ is
{\em locally Kupka-Smale} if there exist $\delta_1>\delta_2>0$ and tubular neighbourhoods
$\Y_{\delta_1} \supseteq \Y_{\delta_2} \supseteq \Y$ such that

(a) The flow of $f$ maps $\Y_{\delta_2}$ into  $\Y_{\delta_1}$.

(b) There exists an arbitrarily small perturbation $p$ of $f$
$\delta_3, \delta_4 > 0$ with $\delta_3 \leq \delta_1$ and
$\delta_4 \leq \delta_2$
such that the flow of $f+p$ maps $\Y_{\delta_4}$ into  $\Y_{\delta_3}$.

(c) Every periodic orbit for $f+p$ that is contained in $\Y_{\delta_4}$
is hyperbolic.
\qed\end{definition}

Lemma~\ref{L:LKS} states that every periodic orbit $\X$ for
a general dynamical system is locally Kupka-Smale. Below we use
this to deduce that $\X^\RR$ is locally Kupka-Smale for certain
networks $\GG$.
Before embarking on the proof of Theorem~\ref{T:2colRC} (b), we explain why it
reduces to a case-by-case analysis showing that all
networks (3)--(7) in Figure~\ref{F:2nodeODElist} are Kupka-Smale
or locally Kupka-Smale.

If $\bowtie$ contains just two colour classes, all induced ODEs
are defined by sets of representatives $\RR$ of cardinality $2$.
The corresponding quasi-quotients are $2$-node networks.
The Rigidity Conjectures then
follow from the previous analysis, provided we can prove that
every $2$-node quasi-quotient network $\GG^\RR$ occurring in the proof
has the stable isolation property. In particular this follows if $\GG^\RR$ 
is Kupka-Smale or locally Kupka-Smale.
By Remark~\ref{R:ODEpres} the stable isolation property and the (local and global)
Kupka-Smale properties
are preserved by ODE-equivalence. It therefore suffices to prove that all
networks in Figure~\ref{F:2nodeODElist} except the disconnected
networks (1) and (2) are Kupka-Smale or locally Kupka-Smale.
(By Section~\ref{S:FH}, networks (1) and (2) lack the stable isolation property,
but we can deal with them by a different, trivial, argument.)

\vspace{.1in}
\noindent{\bf Proof of Theorem~\ref{T:2colRC} (b)}

{\em Case} (1):

This network is disconnected. It arises only if
the original network $\GG$ has no arrows whose head and tail have different colours.
This implies that $\GG$ is the disjoint union of two networks, each
having all nodes of the same colour. Such a pattern is automatically balanced.

{\em Case} (2):

This network is also disconnected, and the same argument applies.

{\em Case} (3):

Admissible ODEs for this network are arbitrary dynamical systems
on $P_1 \times P_2$. The standard Kupka-Smale Theorem~\cite{K63,P66,S63}
therefore applies.

{\em Case} (4):

Admissible ODEs for this network are arbitrary $\Z_2$-equivariant dynamical systems
on $P_1 \times P_1$. The equivariant Kupka-Smale Theorem of Field~\cite{F80}
therefore applies.

{\em Case} (5): This is a general `forced' dynamical system on $P_1 \times P_2$.
We prove it is locally Kupka-Smale, which
implies the stable isolation property.
Admissible ODEs have the `forced' or `skew product' form
\begin{equation}
\label{E:FFadODE}
\begin{array}{rcl}
\dot x_1 &=& f(x_1) \\
\dot x_2 &=& g(x_2,x_1)
\end{array}
\end{equation}
If $x_1$ is steady, with equilibrium $x_1 = \alpha$, we can perturb
$f$ to make $\alpha$ a hyperbolic fixed point. Then the second component is
$\dot x_2 = g(x_2,\alpha)$, which is an arbitrary ODE in $x_2$. By the
Kupka-Smale Theorem we can perturb $g$ to make $x_2$ hyperbolic. Now
$\X$ is hyperbolic.

\begin{figure}[htb]
\centerline{
\includegraphics[width=.4\textwidth]{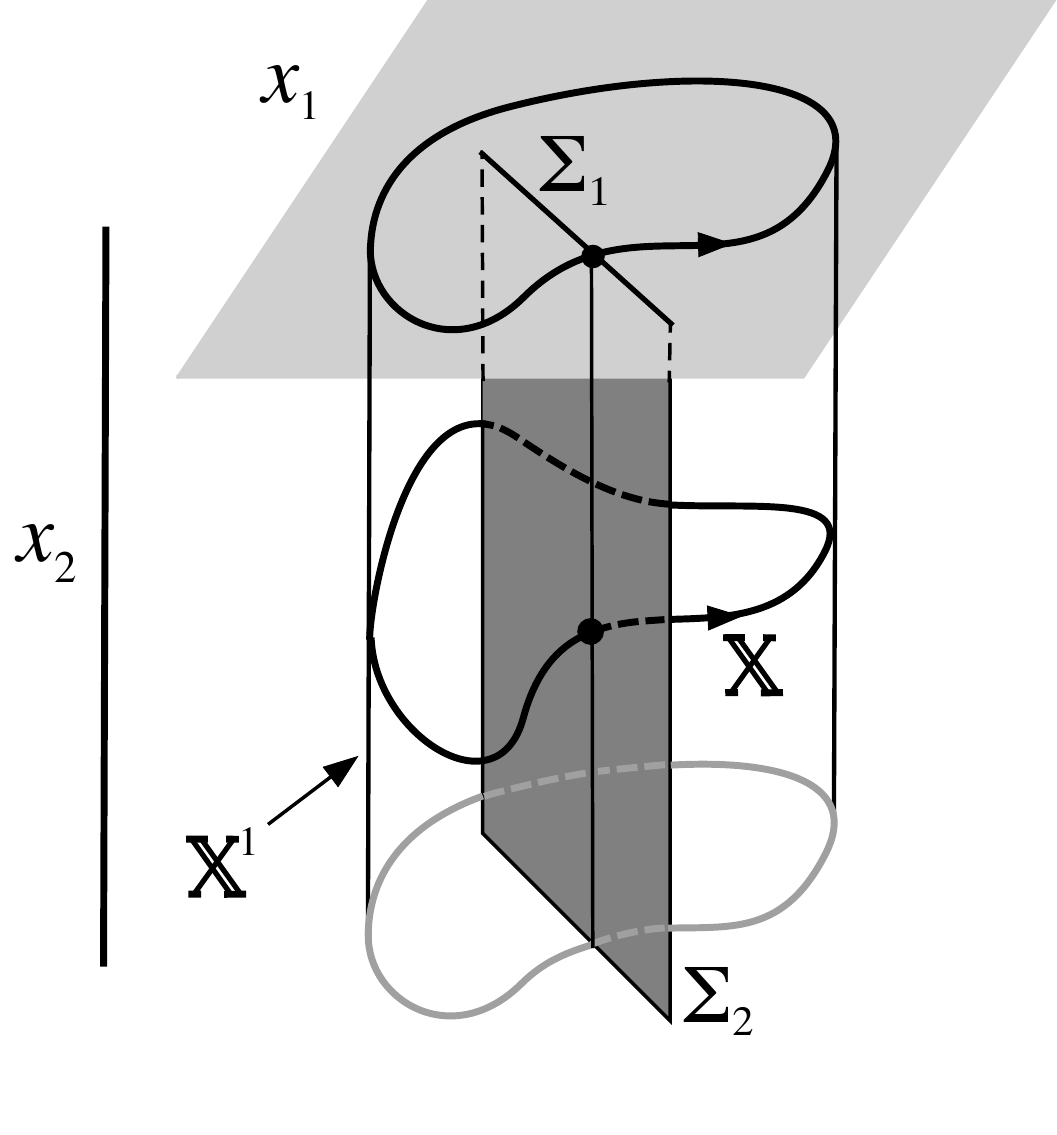}}
\caption{Poincar\'e Section for~\eqref{E:FFadODE}.}
\label{F:ffppo}
\end{figure}

If $x_1$ oscillates, let the minimal period of $\X$ be $T$.
Let $\Sigma_1 \subseteq P_1$ be a Poincar\'e section 
transverse to $\{x_1(t)\}$ at $t=0$, and define
$\Sigma_2 = \{x_1(t_0)\} \times \Sigma_1$ as in Figure~\ref{F:ffppo}. The
Poinca\'re map $\pi: \Sigma_2 \to \Sigma_2$ has the form
\[
\pi(u,v) = (\pi_1(u),\pi_2(u,v))
\]
since the ODE is feedforward. The derivative is
\[
\mathrm{D}\pi = \Matrix{\mathrm{D}_1\pi_1 & 0 \\ \mathrm{D}_1\pi_2 &\mathrm{D}_2\pi_2}
\] 
and we want to make $\mathrm{D}\pi|_{\X^\ast}$ hyperbolic, where
$\X^\ast = (x_1(t_0),x_2(t_0))$ is the fixed point of $\pi$.

Perturbing $f$ and applying the Kupka-Smale Theorem we can make
$\mathrm{D}_1\pi_1$ hyperbolic on $P_1$. 
The flow of \eqref{E:FFadODE} leaves
the cylinder $\X^1 \times P_2 \cong \Sone\times P_2$ invariant.
The component $\pi_2(x_1(t_0), x_2)$ is a local diffeomorphism near $(x_1(t_0), x_2(t_0) \in \X$.
A suitable perturbation of $g$ perturbs $\pi_2$ to any nearby diffeomorphism
(consider a suspension). By the
Kupka-Smale Theorem for diffeomorphisms (discrete dynamics) we can
make $\mathrm{D}_2\pi_2$ hyperbolic at $\X^\ast$.
Therefore $\X$ is hyperbolic.

{\em Case} (6): This is similar, but the network is
homogeneous, so minor modifications are required. Admissible ODEs have the form
\beqn
\dot x_1 &=& f(x_1,x_1) \\
\dot x_2 &=& f(x_2,x_1)
\eeqn
The subspace $\Delta = \{(u,u)\}$ is flow-invariant. If $\X$ meets
$\Delta$ then it remains in $\Delta$ for all time, the ODE reduces
to $\dot u = f(u,u)$, and $\X$ can be made hyperbolic by the
Kupka-Smale Theorem applied to $f$.
If $\X$ does not meet $\Delta$ then on a small tubular neighbourhood of $\X$,
 admissibility imposes no
constraints: $f(x_1,x_1)$ and $f(x_1,x_2)$ are independent functions.
We can now argue as in case (5), perturbing near $\X$.

{\em Case} (7): 
 Admissible ODEs have the form
\beqn
\dot x_1 &=& f(x_1,x_2,x_2) \\
\dot x_2 &=& f(x_2,x_1,x_2)
\eeqn
and $\Delta = \{(u,u)\}$ is flow-invariant. 
By similar reasoning, either $\X \subseteq \Delta$ and the result follows,
or we can reduce this case to a 
general dynamical system in some tubular neighbourhood of $\X$.
To do so we must show that $\X^1 \cap \X^2 = \emptyset$,
where
\beqn
\X^1 &=& \{(x_1(t),x_2(t),x_2(t)): t \in \R\} \\
\X^2 &=& \{(x_2(t),x_1(t),x_2(t)): t \in \R\} 
\eeqn
Suppose that $\X^1 \cap \X^2 \neq \emptyset$. Then there exist times $s,t$ such that
\[
(x_1(t),x_2(t),x_2(t)) = (x_2(s),x_1(s),x_2(s))
\]
Therefore
\[
x_1(t) = x_2(s) = x_2(t)
\]
so $\X \cap \Delta \neq \emptyset$, and $\X \subseteq \Delta$.
Now tubular neighbourhoods of $\X^1$ and $\X^2$ are disjoint, so
we can choose $f$ independently on these neighbourhoods
without destroying admissibility. The analysis then reduces to case (3).

{\em Case} (8): This is similar to case (7). 
Admissible ODEs have the form
\beqn
\dot x_1 &=& f(x_1,\underbrace{x_2,\ldots x_2}_{p+q}) \\
\dot x_2 &=& f(x_2,\underbrace{x_1,\ldots,x_1}_p,\underbrace{x_2,\ldots x_2}_q)
\eeqn
Define 
\beqn
\X^1 &=& \{(x_1(t),\underbrace{x_2(t),\ldots x_2(t)}_{p+q}): t \in \R\} \\
\X^2 &=& \{(x_2,\underbrace{x_1(t),\ldots,x_1(t)}_p,\underbrace{x_2(t),\ldots x_2(t)}_q): t \in \R\} 
\eeqn
Again we can prove that $\X^1 \cap \X^2 = \emptyset$ by comparing suitable entries
of $\X^1(t)$ and $\X^2(s)$ and deducing that $x_1(t) =x_2(s) = x_2(t)$.
 The rest is as before.
\qed

\subsection{Final Remarks}
\label{S:FinRem}

The proof in this paper of the Rigidity Conjectures for strongly hyperbolic
periodic orbits uses three unorthodox methods, which together reveal
a link with network analogues of the Kupka-Smale Theorem.
These methods are:

\begin{itemize}
\item[\rm (a)]
The use of overdetermined ODEs in which some components 
are formally inconsistent with others.
\item[\rm (b)]
A construction analogous to the usual quotient network by a balanced 
colouring, applied to a colouring that is not balanced.
\item[\rm (c)]
Construction of an admissible perturbation $p$ that leaves the perturbed
periodic orbit $\tilde\X$ unchanged. This avoids the main obstacle to
proving the Rigidity Conjectures: keeping track of $\tilde\X$. Here this
is not a problem because $\tilde\X = \X$. 
\end{itemize}

This combination works because the rigidity assumption causes enough structure to
be preserved for the formal inconsistency (a) to contradict rigidity
of the local synchrony pattern for $\X$. In the current state of
knowledge, this contradiction relies on strong 
hyperbolicity, which is closely related to the Kupka-Smale Theorem and
possible network analogues.

A more specific version of strong hyperbolicity is logically equivalent to the
Rigid Synchrony Property (local or global). Definition~\ref{D:strongly_hyperbolic}
is stated for all colourings, but we use it only for colourings determined
by a local rigid synchrony pattern. We can therefore weaken the definition
of strong hyperbolicity by considering only these colourings.
This weaker version still implies the
Rigid Synchrony Property, with the same proof. Conversely, the Rigid Synchrony Property
implies that any local rigid synchrony pattern is balanced, so its
synchrony space is flow-invariant. This implies that the Floquet multipliers
of the induced periodic orbit $\X^\RR$ are a subset of those of $\X$.
Since $\X$ is assumed hyperbolic, so is $\X^\RR$.

We have shown that if a counterexample to
the Rigidity Conjectures exists, its dynamics must be remarkably degenerate
from the viewpoint of general dynamical systems theory. Specifically, for some unbalanced
colouring and all small admissible perturbations, and for any 
set of representatives $\RR$ of that colouring,
the solution $\X^\RR$ of the induced ODE is non-isolated
in an extreme manner: it is included in a continuum of distinct periodic orbits. 
It is difficult to see how the constraints imposed by network topology could create
such degeneracy rigidly. Be that as it may, the focus for proving
the Rigidity Conjectures now shifts towards network analogues
of the Kupka-Smale Theorem --- at least until some alternative method is found.

\end{document}